\documentclass[11pt]{amsart}
\usepackage[english]{babel}
\usepackage{latexsym}
\usepackage{indentfirst}
\usepackage{amsxtra}
\usepackage{amsmath}
\usepackage{mathrsfs}
\usepackage{amssymb}
\usepackage{url}
\usepackage{hyperref}
\usepackage[T1]{fontenc}
\usepackage[utf8]{inputenc}
\usepackage[all]{xy}
\usepackage{tikz}
\usepackage{graphicx}
\usepackage{tikz-cd}
\usetikzlibrary{arrows.meta}
\usepackage{comment}
\usepackage{cancel}
\usepackage{mathtools}
\usepackage{faktor}
\usepackage{xfrac}
\graphicspath{ {./images/} }
\usepackage{caption}
\usepackage{xcolor}

\newcommand{\id}[1]{\Id(#1)}

\newtheorem{lemma}{Lemma}[section]
\newtheorem{teo}[lemma]{Theorem}
\newtheorem{thm}[lemma]{Theorem}
\newtheorem*{thm*}{Theorem}
\newtheorem{prop}[lemma]{Proposition}

\newtheorem{coro}[lemma]{Corollary}
\newtheorem{defi}[lemma]{Definition}
\newtheorem{rem}[lemma]{Remark}
\newtheorem{ex}[lemma]{Example}
\newtheorem{nex}[lemma]{Non-example}
\newtheorem{cj}[lemma]{Conjecture}

 \DeclareMathOperator{\Id}{Id}

\makeatletter
\renewcommand{\section}{\@startsection{section}{1}{\z@}
  {-3.5ex \@plus -1ex \@minus -.2ex}
  {2.3ex \@plus.2ex}
  {\normalfont\centering\bfseries}}
\makeatother

\makeatletter
\renewcommand{\subsection}{\@startsection{subsection}{2}{\z@}
  {-3.25ex\@plus -1ex \@minus -.2ex}
  {1.5ex \@plus .2ex}
  {\normalfont\bfseries}}
\makeatother

\begin{document}
\title
{Standard Polynomials for Principal Subalgebras $\mathbb{K}Q_{\geq 1}$ of Path Algebras}

\author{Yihao Zheng}
\address{School of Mathematical Sciences, Fudan University, Shanghai 200433, China}
\email{yhzheng24@m.fudan.edu.cn}

\author{Shenglin Zhu}
\address{School of Mathematical Sciences, Fudan University, Shanghai 200433, China}
\email{mazhusl@fudan.edu.cn}

\keywords{Standard polynomials, Polynomial identities,  Quivers, Principal subalgebras, Words of formal languages}

\begin{abstract}
We investigate standard polynomials for principal subalgebras of path algebras. First, we use standard polynomials to study the $PI$-theory of principal subalgebras. Then we describe the $St_2$-elements and $St_3$-elements of principal subalgebras, giving a characterization of their centers and 3-centers. In addition, we apply these results to combinatorics on words of formal languages, obtaining some explanations from a combinatorial perspective.
\end{abstract}

\maketitle

\section{Introduction}

Standard polynomials play an important role in the study of noncommutative algebra. Owing to their intrinsic alternating symmetry, multilinearity, and permutation-based construction, they have evolved from an algebraic construction into a powerful tool, with applications to noncommutative algebra, matrix theory, invariant theory, combinatorics, representation theory, noncommutative algebraic geometry, quantum groups, and beyond; see, for example, \cite{MR302689}, \cite{MR1108620}, \cite{MR366968}, \cite{MR576061}.

Standard polynomials, by virtue of their profound structure, form the cornerstone of Polynomial Identity ($PI$) theory. A classical result is the landmark Amitsur--Levitzki Theorem \cite{MR36751}, which proves that the algebra $M_n(R)$ of $n \times n$ matrices over a commutative ring $R$ satisfies the standard polynomial identity of degree $2n$, and that this degree is minimal. This result identified matrix algebras as the canonical examples of $PI$-algebras and anchored the combinatorial study of identities. Cerulli Irelli, De Loera Ch\'{a}vez, and Pascucci \cite{ART_2026__3_2_165_0} used standard polynomials to give a necessary and sufficient condition for the path algebra of a finite quiver to be $PI$. Inspired by their results, we give a necessary and sufficient condition for the principal subalgebra of a path algebra of a finite quiver to be $PI$.

\begin{thm}[Theorem~\ref{Thm:PI principal}]
For a finite quiver $Q$, $\mathbb{K}Q_{\geq 1}$ is $PI$ if and only if $\mathbb{K}Q$ is $PI$.
\end{thm}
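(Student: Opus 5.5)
The direction ``$\mathbb{K}Q$ is $PI$ $\Rightarrow$ $\mathbb{K}Q_{\geq 1}$ is $PI$'' is immediate: $\mathbb{K}Q_{\geq 1}$ is a subalgebra (indeed an ideal) of $\mathbb{K}Q$, and any polynomial identity of $\mathbb{K}Q$ is in particular satisfied by all elements of $\mathbb{K}Q_{\geq 1}$. So the content is the converse, which I will prove by contraposition. I assume $\mathbb{K}Q$ is not $PI$ and produce inside $\mathbb{K}Q_{\geq 1}$ a copy of the free algebra $\mathbb{K}\langle x,y\rangle$, which is not $PI$.

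For the first step I invoke the criterion of Cerulli Irelli, De Loera Ch\'{a}vez and Pascucci \cite{ART_2026__3_2_165_0}. As I recall it, for a finite quiver $Q$ the path algebra $\mathbb{K}Q$ is $PI$ if and only if no vertex of $Q$ lies on two distinct oriented cycles, that is, each strongly connected component is a single vertex or a single oriented cycle. If the criterion is stated in a different but equivalent form, I will extract the same consequence. Assuming $\mathbb{K}Q$ is not $PI$, this gives a vertex $v$ and two distinct oriented cycles through $v$. I then normalize them: rotate both cycles to start and end at $v$, and shorten them so that each returns to $v$ only at its endpoint. Call the resulting cycles $c_1 \neq c_2$. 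Since the two original cycles are distinct, some such first-return cycles at $v$ must differ, because a cycle through $v$ is a concatenation of first-return cycles at $v$. Both $c_1$ and $c_2$ have length $\geq 1$, so they lie in $\mathbb{K}Q_{\geq 1}$.

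The main step, and the only real obstacle, is showing that the algebra homomorphism $\varphi:\mathbb{K}\langle x,y\rangle\to \mathbb{K}Q_{\geq 1}$ defined by $x\mapsto c_1$ and $y\mapsto c_2$ is injective. Because paths form a basis of $\mathbb{K}Q$, it suffices to show that distinct monomials $w\neq w'$ in $x,y$ map to distinct paths. Given a path $p=\varphi(w)$, I recover $w$ by reading $p$ and cutting at each visit to $v$. Each $c_i$ meets $v$ only at its endpoints, so these cuts split $p$ into exactly the factors $c_{i_1},\dots,c_{i_k}$. Since $c_1\neq c_2$ and the cuts are forced, the sequence $(i_1,\dots,i_k)$ is determined by $p$. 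Hence $\{c_1,c_2\}$ freely generates a free monoid of paths, and $\varphi$ is injective.

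To finish, suppose $\mathbb{K}Q_{\geq 1}$ satisfied a nonzero polynomial identity $f$. Then its subalgebra $\varphi(\mathbb{K}\langle x,y\rangle)\cong \mathbb{K}\langle x,y\rangle$ would satisfy $f$ as well. But the free algebra on two generators is not $PI$: it contains free algebras of every finite rank, for instance generated by $x y^{j}$ for $j=1,\dots,m$, so no nonzero polynomial vanishes on it. This contradiction shows that if $\mathbb{K}Q_{\geq 1}$ is $PI$, then $\mathbb{K}Q$ is $PI$.
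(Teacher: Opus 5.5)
Your proof is correct and follows the same route as the paper. You use the Cerulli Irelli--De Loera Ch\'{a}vez--Pascucci criterion to get a vertex lying on two distinct cyclic subquivers, embed a free algebra on two generators into $\mathbb{K}Q_{\geq 1}$, and conclude from $\id{\mathbb{K}Q_{\geq 1}} \subseteq \id{B(c_1,c_2)} = 0$; your reduction to distinct first-return cycles at $v$ and the cut-at-$v$ decoding supply the freeness justification that the paper only asserts.

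One cosmetic fix: define $\varphi$ on the constant-free part $\mathbb{K}\langle x,y\rangle_+$, as the paper does, since the non-unital algebra $\mathbb{K}Q_{\geq 1}$ has nowhere to send $1$. Your non-$PI$ argument via the elements $xy^j$ works unchanged there.
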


We also describe the ideal of polynomial identities of the principal subalgebra of a path algebra of a finite acyclic quiver.

\begin{thm}[Theorem~\ref{Thm:acyclic T-ideal}]
Let $Q$ be a finite acyclic quiver, and let the length of the longest paths of $Q$ be $n-1$. Then $\id{\mathbb{K}Q_{\geq 1}} = \langle x_1 x_2 \cdots x_n \rangle_{T_+}$.
\end{thm}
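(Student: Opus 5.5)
We must show that the T-ideal of identities of $\mathbb{K}Q_{\geq 1}$ (with $T_+$ meaning the T-ideal of the free non-unital algebra, closed under substitutions by polynomials without constant term) equals the T-ideal generated by $x_1x_2\cdots x_n$. The argument has two inclusions.

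\emph{The inclusion $\supseteq$.} We show that $x_1\cdots x_n$ is an identity of $\mathbb{K}Q_{\geq 1}$. Since the monomial is multilinear, it suffices to evaluate it on basis elements, i.e. on paths $p_1,\dots,p_n$ of length $\geq 1$. The product $p_1\cdots p_n$ is either $0$ or a path of length at least $n$. Because $Q$ is acyclic and its longest path has length $n-1$, there are no paths of length $\geq n$, so the product vanishes. Hence $x_1\cdots x_n\in\id{\mathbb{K}Q_{\geq 1}}$. Since the identities form a $T_+$-ideal, the whole $\langle x_1\cdots x_n\rangle_{T_+}$ is contained in it. Equivalently, $(\mathbb{K}Q_{\geq1})^n=0$.

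\emph{The inclusion $\subseteq$.} First note that $\langle x_1\cdots x_n\rangle_{T_+}$ is exactly the span of all monomials of degree $\geq n$ in the free non-unital algebra. It contains all such monomials (substitute variables or monomials into $x_1\cdots x_n$ and multiply on either side), and it is spanned by them. Thus modulo this ideal every polynomial reduces to its part of total degree $\leq n-1$. Given an identity $f$, we may therefore assume $f$ has degree $\leq n-1$ and must show $f=0$. Since $\operatorname{char}\mathbb{K}$ may be arbitrary, multilinearization is not enough. Instead we aim to use a multi-homogeneous reduction: over an infinite field, identities split into multihomogeneous components. If $\mathbb{K}$ is finite, the substitution argument below should still work directly, but this needs care. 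Each multihomogeneous component $g$ has degree $d\leq n-1$ in variables $x_1,\dots,x_k$ with multidegrees $(d_1,\dots,d_k)$.

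The key construction fixes a path $\gamma=\alpha_1\alpha_2\cdots\alpha_{n-1}$ of length $n-1$, whose arrows are pairwise distinct since $Q$ is acyclic. Given a monomial $w=x_{i_1}\cdots x_{i_d}$ in $g$, we substitute $x_j=\sum_{t:\,i_t=j}\alpha_t$, sums of arrows along the first $d$ positions of $\gamma$. Expanding $g$, a product $\alpha_{s_1}\cdots\alpha_{s_d}$ is nonzero only if $s_1,\dots,s_d$ are consecutive increasing, and since there are only $d$ of them available, the product must be $\alpha_1\cdots\alpha_d$. The coefficient of the path $\alpha_1\cdots\alpha_d$ is then exactly the coefficient of $w$ in $g$: only the monomial $x_{i_1}\cdots x_{i_d}$ picks $\alpha_t$ from slot $t$ for every $t$. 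Since $g$ is an identity, this coefficient is $0$. As $w$ was arbitrary, $g=0$. This argument in fact avoids multihomogeneous decomposition altogether. Apply it directly to $f$ with $w$ of each degree $d\leq n-1$. Components of other degrees $d'\neq d$ yield paths of length $d'$, which are distinct basis elements from $\alpha_1\cdots\alpha_d$, so the argument is field-independent.

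The main obstacle is this coefficient extraction, in particular checking that no other monomial of $f$ produces the path $\alpha_1\cdots\alpha_d$. This holds because each variable's substitution uses only arrows indexed by its own positions, and the distinctness of the arrows $\alpha_t$ (acyclicity) pins down the positions.
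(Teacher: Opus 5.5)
Your proof is correct and is essentially the paper's argument. The paper observes that a longest path spans a copy of $\mathbb{K}\overrightarrow{A_n}_{\ge 1}$ in $\mathbb{K}Q_{\ge 1}$ and invokes Proposition~\ref{prop:An T-ideal}, whose proof kills monomials of degree $<n$ by substituting arrows (and sums of arrows) of that path, just as you do directly in $Q$. Your positional substitution $x_j=\sum_{t:\,i_t=j}\alpha_t$ is a cleaner, uniform version of the paper's degree-by-degree ``continuing this argument'' step and works over any field. One small point: your consecutiveness claim rests on the vertices of $\gamma$ being pairwise distinct (which acyclicity gives), not merely on its arrows being distinct.
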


We also consider the question of whether the ideal of polynomial identities of a path algebra of a finite quiver is equal to that of its principal subalgebra.

\begin{thm}[Theorem~\ref{Thm:two T-ideal}]
(\romannumeral1) If $Q$ is a finite but not $PI$ quiver, then $\id{\mathbb{K}Q} = \id{\mathbb{K}Q_{\geq 1}} = 0$.

(\romannumeral2) If $Q$ is a finite acyclic quiver, then $\id{\mathbb{K}Q}$ is a proper subset of $\id{\mathbb{K}Q_{\geq 1}}$.

(\romannumeral3) If $Q$ is a finite $PI$ but not acyclic quiver, and for every vertex $i$ there exists a non-trivial path $p_i$ such that $s(p_i) = t(p_i) = i$, then $\id{\mathbb{K}Q} = \id{\mathbb{K}Q_{\geq 1}}$.
\end{thm}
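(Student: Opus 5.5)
For part (i) I would argue through the subalgebra inclusion. Since $\mathbb{K}Q_{\geq 1}$ is a (non-unital) subalgebra of $\mathbb{K}Q$, every identity of $\mathbb{K}Q$ is an identity of $\mathbb{K}Q_{\geq 1}$. So in all three cases $\id{\mathbb{K}Q}\subseteq \id{\mathbb{K}Q_{\geq 1}}$, provided we compare polynomials without constant term, as in the $T_+$ setting. If $Q$ is not $PI$, then $\mathbb{K}Q$ is not $PI$, so $\id{\mathbb{K}Q}=0$. By Theorem~\ref{Thm:PI principal}, $\mathbb{K}Q_{\geq 1}$ is not $PI$ either, so $\id{\mathbb{K}Q_{\geq 1}}=0$.

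For part (ii), let the longest path have length $n-1$. By Theorem~\ref{Thm:acyclic T-ideal}, $x_1x_2\cdots x_n\in \id{\mathbb{K}Q_{\geq 1}}$; indeed any product of $n$ nontrivial paths vanishes. In $\mathbb{K}Q$ I would substitute $x_1=\dots=x_n=e_i$ for some vertex $i$, which gives $e_i\neq 0$. Hence $x_1\cdots x_n\notin\id{\mathbb{K}Q}$, and the inclusion is proper.

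For part (iii) I need the reverse inclusion $\id{\mathbb{K}Q_{\geq 1}}\subseteq\id{\mathbb{K}Q}$. I would first reduce to multilinear $f(x_1,\dots,x_m)=\sum_\sigma c_\sigma x_{\sigma(1)}\cdots x_{\sigma(m)}$. This is standard over a field of characteristic $0$; over an arbitrary infinite field one would work with multihomogeneous components and partial linearizations instead. By multilinearity it then suffices to show $f(u_1,\dots,u_m)=0$ for paths $u_j$, some of which may be trivial. Fix a large integer $N$ and put $\psi(u)=u\,p_{t(u)}^N$, meaning $u$ followed by $N$ copies of the cycle at its end vertex, written in the paper's composition convention. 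Each $\psi(u_j)$ lies in $\mathbb{K}Q_{\geq 1}$, so $f(\psi(u_1),\dots,\psi(u_m))=0$.

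The monomial $w_\sigma=\psi(u_{\sigma(1)})\cdots\psi(u_{\sigma(m)})$ is nonzero exactly when $u_{\sigma(1)}\cdots u_{\sigma(m)}\neq 0$, because each inserted cycle starts and ends at the junction vertex. I would then show that the assignment $u_\sigma\mapsto w_\sigma$ is well defined and injective on the nonzero monomials. Concretely, I want $w_\sigma=w_\tau$ if and only if $u_{\sigma(1)}\cdots u_{\sigma(m)}=u_{\tau(1)}\cdots u_{\tau(m)}$. Given this, the coefficient of each distinct path in $f(u_1,\dots,u_m)$ equals the coefficient of the corresponding path in $f(\psi(u_1),\dots,\psi(u_m))=0$, so $f(u_1,\dots,u_m)=0$.

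The main obstacle is this combinatorial injectivity claim. The cycles $p_i$ may themselves occur inside the $u_j$, and equal products of the $u$'s can factor differently. I would take $N$ larger than $\sum_j \ell(u_j)$ and try to recover, from the word $w_\sigma$, the positions of the $m$ long blocks $p^N$. The aim is to show that the concatenation obtained by deleting those blocks is independent of how $w_\sigma$ is written, using length counting and periodicity (Fine--Wilf type) arguments on words. If that fails, I would instead choose the substitution $x_j\mapsto u_j p_{t(u_j)}^{N_j}$ with distinct, rapidly growing exponents $N_j$, so that the block lengths encode the permutation $\sigma$.
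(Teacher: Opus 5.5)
Parts (i) and (ii) are fine and match the paper, which calls (i) trivial and for (ii) just notes $x_1\cdots x_n\in\id{\mathbb{K}Q_{\geq 1}}\setminus\id{\mathbb{K}Q}$; your substitution $x_j=e_i$ is the witness. Part (iii), however, has a genuine gap. Your reduction to a multilinear $f$ evaluated on paths is sound. So is the substitution $u\mapsto u\,p_{t(u)}^N$, which is where the hypothesis on the $p_i$ makes trivial paths non-trivial. But the claim that carries all the weight is left unproved: $w_\sigma=w_\tau$ if and only if $u_{\sigma(1)}\cdots u_{\sigma(m)}=u_{\tau(1)}\cdots u_{\tau(m)}$. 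Moreover, the tools you propose for it never use that $Q$ is $PI$, and without that hypothesis the claim is false. Take a vertex with two loops $a,b$, $p=a$, $u_1=a$, $u_2=b$ and $u_3$ trivial. Then $u_3u_2u_1=u_2u_1u_3=ba$, but $w_{321}=a^Nba^{2N+1}\neq ba^{3N+1}=w_{213}$ for every $N$. So no argument that only locates the $p^N$-blocks inside a word (Fine--Wilf or otherwise) can succeed. Your fallback cannot work either. The block lengths can never encode $\sigma$: for a single loop, $w_\sigma$ is independent of $\sigma$. If they did encode $\sigma$, your argument would force every $c_\sigma$ with non-zero product to vanish, contradicting the identity $[x_1,x_2]$ of $\mathbb{K}(C_1)_{\geq 1}$.

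What is missing is the structure forced by Theorem~\ref{Thm:PIQuivers}. Suppose $Q$ is $PI$ and every vertex carries a non-trivial closed path. Then:
\begin{itemize}
\item the cyclic subquivers are vertex-disjoint circles covering $Q_0$;
\item every non-trivial closed path at $i$ is a power of the primitive cycle through $i$;
\item no path can leave a circle and return to it, since that would create a second cyclic subquiver through some vertex.
\end{itemize}
Hence a path is determined by its starting vertex, the arrows it uses off the circles, and the number of arrows it spends on each circle it meets, because inside a circle there is exactly one path of each length from a given vertex. Inserting $p_v^N$ at $v$ changes only the count on the circle through $v$, which grows by $N\ell(p_v)$. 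The multiset $\{t(u_1),\dots,t(u_m)\}$ of insertion vertices does not depend on $\sigma$. So $w_\sigma$ is the image of the product $u_{\sigma(1)}\cdots u_{\sigma(m)}$ under one fixed injective operation. This gives your equivalence for every $N\geq 1$, with no need for large $N$.

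With this supplied you have a self-contained, elementary proof of (iii). The paper instead gets (iii) in one line from Theorem~\ref{Thm:incidence}. Taking $\pi$ to be the set of arrows gives $\mathbb{K}Q_\pi=\mathbb{K}Q_{\geq 1}$. Since every vertex has a non-trivial closed path, $A_\pi$ is the same matrix algebra as for $\pi$ equal to all paths, whose identities are those of $\mathbb{K}Q$.
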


Furthermore, we make an intensive study of $St_i$-elements for the principal subalgebra of a path algebra, especially $St_2$ and $St_3$. We say an element $x_1$ of an algebra $A$ is an $St_i$-element if for every $(i-1)$-tuple $(x_2, \dots, x_i)$, $St_i(x_1, \dots, x_i) = 0$, where $i \geq 2$, $x_2, \dots, x_i \in A$. When $i=2$, the definition of an $St_2$-element coincides with that of a central element. Garc\'{i}a et al.~\cite{MR3712583} computed the center of a path algebra for any finite quiver. Based on their work, we compute the $St_2$-elements (or central elements) of the principal subalgebra of a path algebra for any finite quiver. We also define the 3-center of $A$ to be the set consisting of all $St_3$-elements of $A$, denoted by $Z_3(A)$, and give a description of the $St_3$-elements of the principal subalgebra of a path algebra for any finite quiver.

\begin{thm}[Theorem~\ref{Thm:principal center} and Theorem~\ref{Thm:3-center}]

(\romannumeral1) If $Q$ is a finite quiver, then $Z(\mathbb{K}Q_{\geq 1})$ is the direct sum of the centers of the principal subalgebras associated to the algebraically connected components of $Q$. If $Q$ is algebraically connected, then
$$Z(\mathbb{K}Q_{\geq 1}) = \operatorname{span}\{\text{paths of } Q \text{ which are two-sided-unextendable}\}$$
except if $Q$ is an $n$-circle $C_n$ with arrows $\{a_1, \dots, a_n\}$; in this case,
\[
Z(\mathbb{K}Q_{\geq 1}) = \left\{ \sum_{i=1}^n f(c_i) \,\bigg|\, f(x) \in \mathbb{K}[x],\ f(0)=0 \right\},
\]
where $c_i = a_i a_{i+1} \cdots a_n a_1 \cdots a_{i-1}$, $i=1,2,\dots,n$, which is isomorphic to the subalgebra of $\mathbb{K}[x]$ consisting of all polynomials with constant zero.

(\romannumeral2) If $Q$ is a finite quiver, then $Z_3(\mathbb{K}Q_{\geq 1})$ is the direct sum of the 3-centers of the principal subalgebras associated to the algebraically connected components of $Q$. If $Q$ is algebraically connected, then
$$Z_3(\mathbb{K}Q_{\geq 1}) = \operatorname{span}\{\text{paths of } Q \text{ which satisfy property } (*)\}$$
except if $Q$ is a $1$-circle $C_1$; in this case, $Z_3(\mathbb{K}Q_{\geq 1}) = \mathbb{K}Q_{\geq 1}$, which is the subalgebra of $\mathbb{K}[x]$ consisting of all polynomials with constant zero.

\end{thm}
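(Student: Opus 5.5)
The plan is to exploit three features of $\mathbb{K}Q_{\geq 1}$. First, it has a basis of non-trivial paths. Second, multiplication is concatenation, or zero when the endpoints do not match. Third, it is graded by path length. Both $St_2$ and $St_3$ are multilinear, so $x_1$ is an $St_i$-element as soon as $St_i(x_1,y_2,\dots,y_i)=0$ for all tuples of paths $y_j$. Moreover, $St_i(x_1,\dots)$ with the $y_j$ homogeneous is a sum of terms of degree $\deg x_1+\sum \deg y_j$. Hence every homogeneous component of an $St_i$-element is again an $St_i$-element. So it suffices to describe homogeneous $St_i$-elements of each length $\ell$.

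For the direct sum decompositions, I write $\mathbb{K}Q_{\geq 1}=\bigoplus_k \mathbb{K}Q^{(k)}_{\geq 1}$ over the algebraically connected components. Any product of two paths lying in different components is $0$. So every monomial of $St_2$ or $St_3$ in which at least two variables come from different components vanishes. Write $x=\sum_k x_k$. Then
\[
St_i\Bigl(x,y_2,\dots,y_i\Bigr)=\sum_k St_i\bigl(x_k,y_2^{(k)},\dots,y_i^{(k)}\bigr)
\]
whenever the $y_j$ are paths: if the $y_j$ do not all lie in one component the whole expression is $0$, and otherwise only $x_k$ from that component contributes. Hence $Z_i(\mathbb{K}Q_{\geq 1})=\bigoplus_k Z_i(\mathbb{K}Q^{(k)}_{\geq 1})$ for $i=2,3$. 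This reduces everything to the algebraically connected case.

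For (i), take $x=\sum_{|p|=\ell}\alpha_p p$ homogeneous. The arrows generate $\mathbb{K}Q_{\geq 1}$, so centrality is equivalent to $xa=ax$ for every arrow $a$. Comparing coefficients of the path $pa$ shows the following: if $pa\neq 0$ and $\alpha_p\neq 0$, then $pa=aq$ for some $q$ with $\alpha_q=\alpha_p$. Hence $p=ar$ and $q=ra$, so $q$ is a rotation of $p$ and both are cycles. If $p$ is not extendable on either side, it is clearly central; this gives the span of two-sided-unextendable paths. Otherwise I follow the orbit of $p$ under successive rotations, imitating the argument of Garc\'{i}a et al.\ \cite{MR3712583}. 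Each rotation step requires the vertex in question to have exactly one outgoing and exactly one incoming arrow in $Q$; otherwise some $pb$ or $bp$ cannot be matched. So algebraic connectedness forces $Q=C_n$. In that case the central homogeneous elements are exactly the rotation-invariant sums $\sum_i c_i^m$ and their rotation-invariant analogues. The last step is to check that the non-cyclic partial rotations are excluded, so that the only survivors in length $\ell$ are the sums $\sum_{i=1}^n c_i^{m}$ with $\ell=mn$. This gives $\{\sum_i f(c_i): f(0)=0\}$, and $x\mapsto\sum_i c_i$ yields the isomorphism with $x\mathbb{K}[x]$.

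For (ii), I take $x$ homogeneous and expand $St_3(x,y,z)$ for paths $y,z$ into its six signed concatenations. I then group the terms by the resulting path. This should produce the combinatorial property $(*)$ on a path $p$: every nonzero concatenation of $p$ with two paths $y,z$ must be cancelled by another ordering giving the same word. I will prove that the span of paths with $(*)$ lies in $Z_3$ by direct cancellation. For the converse I will run a leading-term argument, as in (i), on $x=\sum\alpha_p p$: choose $y,z$ arrows or short paths adjacent to a path $p$ that fails $(*)$, and show that the offending word can only be produced from $p$ itself. For $C_1$ the algebra is $x\mathbb{K}[x]$, which is commutative, so $Z_3$ is everything.

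The main obstacle is this converse in the cyclic situations. When words can coincide under rotation, for example on $C_n$ with $n\geq 2$ or on quivers with several loops, a word arising from $p$ might be cancelled by a different path $q$. I expect to handle this by choosing $y,z$ long enough to break the rotational symmetry. The point is to make the word $pyz$ determine its factorization uniquely, using the unique factorization of paths into arrows. I will check directly that $C_1$ is the only case where no such choice exists.
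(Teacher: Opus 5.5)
Your reductions (to homogeneous elements and to algebraically connected components) are sound, and so is your argument for (i). Comparing coefficients of $pa$ in $xa=ax$ is the same mechanism as the paper's $L/R/U$ analysis, and it correctly forces $Q=C_n$ once some path in the support is extendable. The gaps are in (ii).

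First, in the paper property $(*)$ is a concrete arrow test. Either there are no two \emph{distinct arrows} $a_1\neq a_2$ with $(p,a_1,a_2)$ a non-trivial triple, or $p=c_1^k$ is a power of a loop in one of the three configurations of Proposition~\ref{prop:st3 path special}. Your version of $(*)$ ("every nonzero concatenation is cancelled by another ordering") just restates $p\in Z_3(\mathbb{K}Q_{\geq 1})$, so the classification of single paths is missing. Sufficiency needs the passage from arrows to arbitrary test paths $q_1,q_2$, for example two different powers of one loop, which the arrow test does not see. Necessity needs explicit witnesses, such as $St_3(a_1a_2,p,a_2)\neq 0$ when $a_1,a_2$ form a $2$-circle through the endpoints of $p$, or $St_3(p,a_1,a_2^{i+10}a_1)\neq 0$ when $p=a_2^i$ and $a_1$ is a second loop (Propositions~\ref{prop:st3 path general}--\ref{prop:not st3 path}).

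Second, and more seriously, your mechanism for the converse cannot work in the cyclic situations. The plan is to choose $y,z$ so that some word of $St_3(x,y,z)$ factors uniquely and so comes from a single path $p$. On $C_n$ with $n\geq 2$, take $x=\sum_i k_i c_i^m$. Paths in $C_n$ are determined by start vertex and length. So whenever $c_i^m yz\neq 0$, the monomials $y\,c_{t(y)}^m z$ and $yz\,c_{t(z)}^m$ are also nonzero and equal to it, contributing $k_i-k_{t(y)}+k_{t(z)}$ to its coefficient. All three come from the same summand only when $y,z$ are powers of $c_i$, and then $St_3(x,y,z)=0$. Hence no choice of $y,z$, however long, isolates a summand. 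Your expectation that $C_1$ is the only case without such a choice is therefore false, even though $Z_3(\mathbb{K}(C_n)_{\geq 1})=0$.

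What is missing is a way to control how the coefficients of \emph{different rotations} of a cycle interact. The paper extracts linear relations and closes each chain to force the coefficients to vanish:
\begin{itemize}
\item $k_1=2k_n$ and $k_i=2k_{i-1}$ around $C_n$ (Theorem~\ref{Thm:ncircle 3-center});
\item chains $k_1=2k_j$, $k_j=2k_{j_1},\dots$ along the rotations of $p_1$ in Cases (iii) and (iv) of Theorem~\ref{Thm:not ncircle 3-center};
\item a $2\times 2$ system for $h_1(a_1a_2)^k+h_2(a_2a_1)^k$ on a $2$-circle.
\end{itemize}
Alternatively, you can engineer a cancellation: on $C_n$, $St_3(x,c_i,a_i)=k_{i+1}c_i^{m+1}a_i$, because the two contributions of $c_i^m$ cancel. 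For cycles at a vertex with two loops, or extra cycles through a $2$-circle, the paper also needs a reduction to a free non-unital algebra and a lexicographic leading-term argument. Your leading-term idea points that way, but you specify neither the order nor the test elements.
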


Moreover, we can apply this theorem to combinatorics on words of formal languages, obtaining some explanations from a combinatorial perspective.

\begin{thm}[Theorem~\ref{Thm:words center}]

Let $\mathbb{A}$ be an alphabet, $F$ a set of forbidden words, $k$ the maximal length of words in $F$. 

$(\romannumeral1 ) $  If $k=1$, let $m$ be the number of the  letters in $\mathbb{A}$ that do not belong to $F$. If $m=1$, denote the unique letter by $t$,  
\[
Z(\mathbb{K} X_F)=Z_3(\mathbb{K} X_F) = \left\{  f(t) \,\bigg|\, f(x) \in \mathbb{K}[x],\ f(0)=0 \right\};
\] 
if  $m \geq 2$, $Z(\mathbb{K} X_F)=Z_3(\mathbb{K} X_F) = 0 $.

(\romannumeral2)    If $k \geq 2$, let $X_F^{[k]}$ be the $k$-th block formal language.  $Z(\mathbb{K} X_F^{[k]})$ is the direct sum of the centers of the algebras associated to the algebraically irreducible components of $X_F^{[k]}$. If $X_F^{[k]}$ is algebraically irreducible, then
$$Z(\mathbb{K} X_F^{[k]}) = \operatorname{span}\{\text{words of } X_F^{[k]} \text{ that are two-sided-unextendable}\}$$
except if $B_k(X_F)$  contains only  $n$ distinct words $\omega_1,\dots,\omega_n$ with $\omega_i[1,k-1]$ also $n$ distinct words, $\omega_i[2,k] = \omega_{i+1}[1,k-1]$ for $i=1,\dots,n-1$, and $\omega_n[2,k] = \omega_1[1,k-1]$; in this case,
\[
Z(\mathbb{K} X_F^{[k]}) = \left\{ \sum_{i=1}^n f(W_i) \,\bigg|\, f(x) \in \mathbb{K}[x],\ f(0)=0 \right\},
\]
where $W_i = \omega_i \cdots \omega_n \omega_1 \cdots \omega_{i-1}$, which is isomorphic to the subalgebra of $\mathbb{K}[x]$ of polynomials with constant zero.

(\romannumeral3) If $k \geq 2$, let $X_F^{[k]}$ be the $k$-th block formal language.  $Z_3(\mathbb{K} X_F^{[k]})$ is the direct sum of the 3-centers of the algebras associated to the algebraically irreducible components of $X_F^{[k]}$. If $X_F^{[k]}$ is algebraically irreducible, then
$$Z_3(\mathbb{K} X_F^{[k]}) = \operatorname{span}\{\text{words of } X_F^{[k]} \text{ that satisfy property } (\Delta)\}$$
except if $B_k(X_F)$ contains only one word consisting of $k$ identical letters; in this case, $Z_3(\mathbb{K} X_F^{[k]}) = \mathbb{K} X_F^{[k]}$, the subalgebra of $\mathbb{K}[x]$ of polynomials with constant zero.

\end{thm}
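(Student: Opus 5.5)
The plan is to reduce everything to the quiver results (Theorem~\ref{Thm:principal center} and Theorem~\ref{Thm:3-center}) by realizing each algebra $\mathbb{K}X_F$ as a principal subalgebra $\mathbb{K}Q_{\geq 1}$ of a path algebra. The proof then translates the combinatorial hypotheses on $F$ and $B_k(X_F)$ into graph-theoretic hypotheses on $Q$.

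\textbf{Case $k=1$.} The forbidden words are single letters. So $X_F$ is the free monoid (without the empty word) on the $m$ allowed letters, and $\mathbb{K}X_F$ is the augmentation-free part of the free algebra on $m$ generators. This is exactly $\mathbb{K}Q_{\geq 1}$ for the quiver with one vertex and $m$ loops.
\begin{itemize}
\item If $m=1$, this quiver is the $1$-circle $C_1$. Both exceptional cases of Theorem~\ref{Thm:principal center} and Theorem~\ref{Thm:3-center} apply, and together they give $Z=Z_3=\{f(t)\mid f(0)=0\}$.
\item If $m\ge 2$, the quiver is algebraically connected and not a circle. Every path extends on both sides, so no path is two-sided-unextendable and $Z=0$.
\item For $Z_3$ when $m\ge 2$, I would check directly that no path satisfies property $(*)$. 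Alternatively, use the inclusion $Z_3\supseteq Z$ only as a sanity check and argue directly in the free algebra: take two loops $a,b$ and a nonzero $x_1$. Suitable choices $x_2,x_3\in\{a,b\}$ make $St_3(x_1,x_2,x_3)\neq 0$, by looking at a leading monomial of $x_1$.
\item If $m=0$, the algebra is zero and the statement is vacuous.
\end{itemize}

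\textbf{Case $k\ge 2$.} Here I use the $k$-th block presentation.
\begin{itemize}
\item Build the quiver $Q$ with vertex set $B_{k-1}(X_F)$ and arrow set $B_k(X_F)$. An arrow $\omega$ goes from $\omega[1,k-1]$ to $\omega[2,k]$. This is the Rauzy/de Bruijn-type graph.
\item I take it as given from the paper's definition that $\mathbb{K}X_F^{[k]}\cong\mathbb{K}Q_{\geq 1}$: words of $X_F^{[k]}$ are sequences of $k$-blocks with overlapping consecutive blocks, and multiplication is concatenation when the overlap matches and zero otherwise, i.e.\ paths in $Q$.
\item Algebraically irreducible components of $X_F^{[k]}$ correspond to algebraically connected components of $Q$. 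So the direct-sum decompositions in (ii) and (iii) follow immediately from part (i) of each quiver theorem.
\item Two-sided-unextendable words correspond to two-sided-unextendable paths, and property $(\Delta)$ is the translation of property $(*)$. I would verify that these definitions match under the block dictionary.
\end{itemize}

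\textbf{Exceptional cases for $k\ge 2$.}
\begin{itemize}
\item $Q$ is an $n$-circle $C_n$ exactly when there are $n$ arrows $\omega_1,\dots,\omega_n$ and $n$ distinct vertices $\omega_i[1,k-1]$, linked cyclically by $\omega_i[2,k]=\omega_{i+1}[1,k-1]$ and $\omega_n[2,k]=\omega_1[1,k-1]$. The cycles $c_i$ become the words $W_i$.
\item $Q$ is $C_1$ exactly when $B_k(X_F)$ has a single word $\omega$ with $\omega[1,k-1]=\omega[2,k]$. This forces $\omega$ to consist of $k$ identical letters.
\end{itemize}

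\textbf{Main obstacle.} The delicate point is bookkeeping of the vertex set. $Q$ should contain only those $(k-1)$-blocks that occur as prefixes or suffixes of $k$-blocks, so that isolated vertices, or stray vertices not matching the stated conditions, do not spoil the characterization of $C_n$. Then, for $k=1$, the ``if and only if'' that $Q$ is $C_1$ when $m=1$ must be handled separately from the block construction.
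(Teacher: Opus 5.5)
Your proposal is correct and follows the paper's proof. You realize $\mathbb{K}X_F$ (one vertex with $m$ loops) and $\mathbb{K}X_F^{[k]}$ (the block quiver on $B_{k-1}(X_F)$, $B_k(X_F)$, with isolated vertices discarded as you rightly insist) as $\mathbb{K}Q_{\geq 1}$ via Proposition~\ref{prop:quiver language}, translate the circle conditions and properties $(*)$/$(\Delta)$ through this dictionary, and read everything off from Theorems~\ref{Thm:principal center}, \ref{Thm:ncircle 3-center} and \ref{Thm:3-center}.

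The one thing to drop is your ``alternative'' direct argument for $Z_3=0$ when $m\ge 2$. When $m=2$ and $x_1=a$, every choice $x_2,x_3\in\{a,b\}$ gives $St_3(x_1,x_2,x_3)=0$ by multilinearity and alternation, so longer test elements such as $a^2$ or $ab$ are needed, as in Theorem~\ref{Thm:ncircle 3-center}(iii). Your primary route for this case is sound: no path satisfies $(*)$, and then Theorem~\ref{Thm:3-center} applies.
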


\section{Preliminaries}

\subsection{Standard polynomials and polynomial identities}

In this subsection, we recall some preliminaries about standard polynomials and $PI$ theory, and fix some notation. Standard references are \cite{MR4249615}, \cite{ART_2026__3_2_165_0}, and  \cite{MR2176105}. Throughout this paper, $\mathbb{K}$ is a field of characteristic zero and all algebras are associative over $\mathbb{K}$.

Given a countable set of non-commutative variables $X = (x_1, x_2, \dots)$, the free associative algebra generated by $X$ is denoted by $\mathbb{K}\langle X\rangle = \mathbb{K}\langle x_1, x_2, \dots\rangle$. Elements of $\mathbb{K}\langle X\rangle$ are non-commutative polynomials in $X$ with coefficients in $\mathbb{K}$.

\begin{defi}
The standard polynomial of degree $n$ is defined to be
\[
St_n(x_1, \dots, x_n) = \sum_{\sigma \in S_n} \operatorname{sgn}(\sigma)\, x_{\sigma(1)} \cdots x_{\sigma(n)},
\]
where $S_n$ is the symmetric group of degree $n$ and $\operatorname{sgn}(\sigma)$ is the sign of the permutation $\sigma$.
\end{defi}

Standard polynomials satisfy the following Leibniz-type formula:
\[
St_{n+1}(x_1, \dots, x_{n+1}) = \sum_{i=1}^{n+1} (-1)^{i+1}\, x_i\, St_n(x_1,\dots,\hat{x}_i,\dots,x_{n+1}).
\]

\begin{defi}
We say an element $x_1$ of an algebra $A$ is an $St_i$-element if for every $(i-1)$-tuple $(x_2,\dots,x_i)$, $St_i(x_1,\dots,x_i)=0$, where $i \geq 2$ and $x_2,\dots,x_i \in A$. We define the 3-center of $A$ to be the set of all $St_3$-elements of $A$, denoted by $Z_3(A)$.
\end{defi}

Standard polynomials form an important class of multilinear polynomials. Recall that a polynomial $f$ is said to be multilinear if its degree with respect to each variable is one.

\begin{defi}
Let $A$ be a $\mathbb{K}$-algebra and $f \in \mathbb{K}\langle x_1,\dots,x_n\rangle$. Then $f$ is said to be a polynomial identity for $A$ (or simply a $PI$ for $A$) if $f(a_1,\dots,a_n)=0$ for all $a_1,\dots,a_n \in A$. If there exists a non-zero polynomial identity for $A$, then $A$ is said to be $PI$.
\end{defi}

\begin{ex}\label{Ex:CommutativeAlgebra}
A commutative algebra is $PI$, since it satisfies $x_1x_2 - x_2x_1$.
\end{ex}

\begin{ex}\label{Ex:NilpotentAlgebra}
A nilpotent algebra $A$ is $PI$, since it satisfies $x_1^n$, where $n$ is the nilpotency index of $A$.
\end{ex}

\begin{nex}\label{Nex:FreeAlgebra}
The free associative algebra $\mathbb{K}\langle x_1,\dots,x_n\rangle$ in $n \ge 2$ variables does not satisfy any non-zero non-commutative polynomial, since the existence of such a polynomial would contradict its freeness.
\end{nex}

\begin{defi}
Given a $\mathbb{K}$-algebra $A$, the set of polynomial identities of $A$ is denoted by
\[
  \id{A} = \{f \in \mathbb{K}\langle X\rangle \mid f \text{ is a PI for } A\}.
\]
\end{defi}

We recall that if $A$ is an unital $\mathbb{K}$-algebra, $\id{A}$ is a T-ideal of $\mathbb{K}\langle X\rangle$, meaning that it is a two-sided ideal of $\mathbb{K}\langle X\rangle$ stable under every endomorphism of $\mathbb{K}\langle X\rangle$: given a polynomial $f(x_1,\dots,x_n) \in \id{A}$ and $n$  polynomials $g_1,\dots,g_n \in \mathbb{K}\langle X\rangle$, we have $f(g_1,\dots,g_n) \in \id{A}$. 

The above is the definition of a T-ideal of $\mathbb{K}\langle X\rangle$. Let $\mathbb{K}\langle X\rangle_+$ denote the non-unital free associative algebra generated by $X$, i.e.,  all non-commutative polynomials with constant  zero in $\mathbb{K}\langle X\rangle$. A two-sided ideal
$I \subseteq \mathbb{K}\langle X\rangle_+$
is called a T-ideal of $\mathbb{K}\langle X\rangle_+$ if it is stable under all endomorphisms of
$\mathbb{K}\langle X\rangle_+$. Equivalently, if
$f(x_1,\ldots,x_n) \in I$, then $f(g_1,\ldots,g_n) \in I$
for all $g_1,\ldots,g_n \in \mathbb{K}\langle X\rangle_+$. Similarly, if $A$ is a  non-unital $\mathbb{K}$-algebra, $\id{A}$ is a T-ideal of $\mathbb{K}\langle X\rangle_+$.

We  have the following properties:
\begin{align*}
&\text{If } B \subseteq A \text{ is a subalgebra of } A,\text{ then } \id{A} \subseteq \id{B},\\
&\text{If } B = A/I \text{ is a quotient algebra of } A,\text{ then } \id{A} \subseteq \id{B}.
\end{align*}
Consequently, subalgebras and quotients of $PI$ algebras are still $PI$ algebras.

Given a non-empty subset $Y$ of $\mathbb{K}\langle X\rangle$, the T-ideal of $\mathbb{K}\langle X\rangle$ generated by $Y$  is denoted by $\langle Y \rangle_T$. Given a non-empty subset $Z$ of $\mathbb{K}\langle X\rangle_+$, the T-ideal of $\mathbb{K}\langle X\rangle_+$  generated by $Z$  is denoted by $\langle Z \rangle_{T_+}$. Moreover, $\langle Z \rangle_{T_+} \subseteq \langle Z \rangle_{T}$.  Since standard polynomials satisfy the Leibniz-type formula, we have
\[
St_m \in \langle St_n \rangle_{T_+} \subseteq \langle St_n \rangle_T,\quad \forall\, m \ge n.
\]

The following results about multilinear polynomials in $PI$ theory are standard.

\begin{teo}\label{char 0 ordinary}
Let $\mathbb{K}$ be a field of characteristic zero and let $A$ be a $\mathbb{K}$-algebra. Then $\id{A}$ is generated by multilinear polynomials.
\end{teo}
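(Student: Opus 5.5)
The plan is the classical multilinearization argument. Fix a nonzero $f \in \id{A}$. It suffices to show that $f$ lies in the T-ideal generated by multilinear elements of $\id{A}$. Here $\langle\,\cdot\,\rangle_T$ is used in the unital case and $\langle\,\cdot\,\rangle_{T_+}$ in the non-unital case; the argument is the same.

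The first step is to reduce to multihomogeneous polynomials. Write $f = \sum f_{(d_1,\dots,d_n)}$, where each component $f_{(d_1,\dots,d_n)}$ is multihomogeneous of degree $d_i$ in $x_i$. Substitute $x_i \mapsto \lambda_i x_i$ with $\lambda_i \in \mathbb{K}$. This is an endomorphism, so $f(\lambda_1 x_1,\dots,\lambda_n x_n) = \sum \lambda^{d} f_{d} \in \id{A}$ for all choices of the $\lambda_i$. Since $\mathbb{K}$ is infinite (characteristic zero), a Vandermonde argument applied one variable at a time shows that each $f_d$ is a linear combination of such substitutions. Hence each $f_d \in \id{A}$, and $f$ lies in the ideal they generate.

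The second step is to multilinearize a multihomogeneous $g \in \id{A}$, inducting on its degree $d_1$ in $x_1$ while the other variables are handled in turn. If $d_1 > 1$, replace $x_1$ by $x_1 + y$, with $y$ a fresh variable, and expand. The result is $\sum_{j=0}^{d_1} g_j(x_1,y,\dots)$, where $g_j$ is homogeneous of degree $j$ in $y$. By the first step, each $g_j \in \id{A}$. Take $h = g_1$, which has degree $d_1-1$ in $x_1$ and degree $1$ in $y$. Substituting $y \mapsto x_1$ into $h$ gives $d_1\, g$. Since $\operatorname{char}\mathbb{K}=0$, we can divide by $d_1$, so $g \in \langle h \rangle_T$. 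Iterating this over all variables produces a multilinear polynomial $m \in \id{A}$ with $g \in \langle m \rangle_T$. Combining both steps, $\id{A}$ is contained in the T-ideal generated by its multilinear elements. The reverse inclusion holds because $\id{A}$ is itself a T-ideal.

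The only delicate point is the role of characteristic zero. It is used twice: for infinitude of $\mathbb{K}$ in the Vandermonde step, and for invertibility of $d_1$ in recovering $g$ from its partial linearization. I would also check that the substitution $y \mapsto x_1$ and the scalings $x_i \mapsto \lambda_i x_i$ are endomorphisms of $\mathbb{K}\langle X\rangle_+$. They are, since they send variables to polynomials without constant term. This makes the argument valid verbatim for non-unital algebras, which is the setting needed for $\mathbb{K}Q_{\geq 1}$.
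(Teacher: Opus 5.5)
Your argument is correct. It is the classical proof: split $f$ into multihomogeneous components by a Vandermonde argument, then partially linearize via $x_1\mapsto x_1+y$ and recover $g$ from the $y$-linear component by $y\mapsto x_1$ and division by $d_1$. The paper gives no proof of its own; it quotes the result as standard from the PI references (e.g.\ Giambruno--Zaicev), where exactly this argument appears, and your check that every substitution is an endomorphism of $\mathbb{K}\langle X\rangle_+$ is the right point to verify for the non-unital algebras $\mathbb{K}Q_{\ge 1}$ to which the paper applies it.
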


\begin{lemma}\label{lem:unique nonzero}
Let $A$ be a $PI$ algebra. If there are $n$ elements $a_1,a_2,\dots,a_n$ in $A$ such that the set $\{ \sigma \in S_n \mid a_{\sigma(1)} a_{\sigma(2)} \cdots a_{\sigma(n)} \neq 0 \}$ has cardinality $1$, i.e., $a_1,\dots,a_n$ have a unique non-zero product, then for any multilinear polynomial $f \in \id{A}$, the degree of $f$ is not equal to $n$.
\end{lemma}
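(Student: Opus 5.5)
Suppose, for contradiction, that $f \in \id{A}$ is multilinear of degree $n$; we may assume $f \neq 0$, otherwise there is nothing to prove. We write
\[
f(x_1,\dots,x_n) = \sum_{\tau \in S_n} \lambda_\tau\, x_{\tau(1)} x_{\tau(2)} \cdots x_{\tau(n)}, \qquad \lambda_\tau \in \mathbb{K}.
\]
Let $\sigma_0$ be the unique permutation with $a_{\sigma_0(1)} \cdots a_{\sigma_0(n)} \neq 0$. Since $f \neq 0$, some coefficient $\lambda_\pi$ is non-zero. The idea is to substitute the elements $a_i$ into the variables in an order chosen so that the monomial $x_{\pi(1)}\cdots x_{\pi(n)}$ is sent to the unique non-vanishing product, while every other monomial is sent to a vanishing one.

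To carry this out, we take $\rho = \sigma_0 \circ \pi^{-1} \in S_n$ and evaluate $f$ at $x_j = a_{\rho(j)}$. The monomial indexed by $\tau$ then becomes
\[
a_{\rho\tau(1)} \cdots a_{\rho\tau(n)}.
\]
By hypothesis this product is non-zero exactly when $\rho\tau = \sigma_0$, that is, when $\tau = \pi$. Hence
\[
f(a_{\rho(1)},\dots,a_{\rho(n)}) = \lambda_\pi\, a_{\sigma_0(1)} \cdots a_{\sigma_0(n)} \neq 0.
\]
This contradicts $f \in \id{A}$, so no multilinear identity of $A$ has degree $n$.

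The argument is a direct substitution, and it does not actually use that $A$ is $PI$ beyond the setting of the statement. The only step needing care is the bookkeeping of the permutation composition, namely checking that $\rho\tau = \sigma_0$ holds exactly when $\tau = \pi$, which is immediate because left multiplication by $\rho$ is a bijection of $S_n$.
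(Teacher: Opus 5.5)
Your proof is correct and is essentially the paper's argument. The paper assumes without loss of generality that the unique non-zero product is $a_1a_2\cdots a_n$, and substitutes $x_{\tau(i)}=a_i$ to show each coefficient $k_\tau$ vanishes; this is your substitution $x_j=a_{\rho(j)}$ with $\sigma_0$ the identity. Your remark that the $PI$ hypothesis is never actually used is also accurate.
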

\begin{proof}
Without loss of generality, assume $a_1 a_2 \cdots a_n \neq 0$. If $f$ is a multilinear polynomial of degree $n$, write
\[
f = \sum_{\sigma \in S_n} k_\sigma\, x_{\sigma(1)} x_{\sigma(2)} \cdots x_{\sigma(n)}.
\]
For a given $\tau$, substitute $x_{\tau(1)} = a_1,\ x_{\tau(2)} = a_2,\ \dots,\ x_{\tau(n)} = a_n$. Since $f$ is a polynomial identity for $A$, we have $k_\tau\, a_1 a_2 \cdots a_n = 0$, hence $k_\tau = 0$. Thus the coefficient of each monomial of $f$ is zero.
\end{proof}

In 1950, Amitsur and Levitzki used standard polynomials to prove the famous Amitsur--Levitzki theorem, which demonstrated the crucial role of standard polynomials in $PI$ theory.

\begin{teo}[Amitsur--Levitzki]\cite{MR36751}\label{Thm:Amitsur-Levitski}
For every $n \ge 1$, $St_{2n} \in \id{M_n(\mathbb{K})}$, and the degree $2n$ is minimal.
\end{teo}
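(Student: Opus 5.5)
We must prove Amitsur--Levitzki: $St_{2n}$ vanishes on $M_n(\mathbb{K})$, and no standard polynomial of smaller degree (indeed no nonzero polynomial identity of degree $<2n$) does. The plan has two independent halves: minimality by a staircase of matrix units, and the identity itself by Rosset's exterior-algebra argument.

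\textbf{Minimality.} Since $St_m\in\langle St_{2n-1}\rangle_{T_+}$ for $m\le 2n-1$ by the Leibniz-type formula, it suffices to show that $M_n(\mathbb{K})$ satisfies no nonzero multilinear identity of degree $2n-1$. By Theorem~\ref{char 0 ordinary}, this also rules out any nonzero identity of lower degree. Take the staircase
\[
a_1=e_{11},\ a_2=e_{12},\ a_3=e_{22},\ a_4=e_{23},\ \dots,\ a_{2n-2}=e_{n-1,n},\ a_{2n-1}=e_{nn}.
\]
Their product in this order is $e_{1n}\neq 0$. Any other ordering is zero: a nonzero product of matrix units must chain indices, and following the chain forces the given order. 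Lemma~\ref{lem:unique nonzero} then excludes multilinear identities of degree $2n-1$. For degree $m<2n-1$ we use the initial segment $a_1,\dots,a_m$ of the staircase, which again has a unique nonzero ordering.

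\textbf{The identity (Rosset's argument).} Let $E$ be the Grassmann algebra on generators $v_1,\dots,v_{2n}$, and let $E_0$ be its even part, which is commutative. Given $A_1,\dots,A_{2n}\in M_n(\mathbb{K})$, set $X=\sum_i A_i v_i\in M_n(E)$. Expanding $X^{2n}$ and using the anticommutation of the $v_i$ gives
\[
X^{2n}=St_{2n}(A_1,\dots,A_{2n})\,v_1\cdots v_{2n},
\]
so it suffices to prove $X^{2n}=0$. Put $Y=X^2$; its entries lie in the commutative ring $E_0$. For $k\ge 1$, $\operatorname{tr}(Y^k)=\operatorname{tr}(X^{2k})$, and this vanishes because $\operatorname{tr}(X\cdot X^{2k-1})=-\operatorname{tr}(X^{2k-1}\cdot X)$: the entries of both factors are odd, so moving $X$ past $X^{2k-1}$ introduces a minus sign. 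Characteristic zero now enters. Newton's identities express the coefficients of the characteristic polynomial of $Y$ through its power traces, with denominators invertible in $\mathbb{K}$, so all coefficients other than the leading one vanish. Cayley--Hamilton over the commutative ring $E_0$ then gives $Y^n=0$, that is, $X^{2n}=0$. Hence $St_{2n}(A_1,\dots,A_{2n})=0$.

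\textbf{Main obstacle.} The delicate step is the sign bookkeeping: first in the expansion of $X^{2n}$, and then in the claim $\operatorname{tr}(X^{2k})=0$, which requires care with traces of matrices over the noncommutative ring $E$. Because it sits on these signs, this step is what I would verify in full detail.
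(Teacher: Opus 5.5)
The paper gives no proof of this theorem. It quotes it as background from \cite{MR36751}, where the original argument is a lengthy combinatorial one. Your proposal is a correct, self-contained proof by a different and much shorter route.

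\textbf{The identity.} This half is Rosset's Grassmann-algebra argument, and each step checks.
\begin{itemize}
\item Terms with repeated indices vanish, and distinct indices reorder with sign $\operatorname{sgn}\sigma$. Hence $X^{2n}=St_{2n}(A_1,\dots,A_{2n})\,v_1\cdots v_{2n}$.
\item Since the entries of $X$ and $X^{2k-1}$ are odd, the trace relation gives $\operatorname{tr}(X^{2k})=-\operatorname{tr}(X^{2k})$.
\item Newton's identities and Cayley--Hamilton over the commutative ring $E_0$ then give $Y^n=0$.
\end{itemize}
Along the way you divide by $2$ and by $1,\dots,n$, which the paper's standing characteristic-zero hypothesis allows. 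The version over an arbitrary commutative ring mentioned in the introduction would need the usual reduction via generic matrices.

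\textbf{Minimality.} Your staircase $e_{11},e_{12},e_{22},\dots,e_{n-1,n},e_{nn}$ is the matrix-unit analogue of tuples the paper itself feeds into Lemma~\ref{lem:unique nonzero}. Compare the sequence $a_1,e_2,a_2,e_3,\dots$ in Example~\ref{Ex:not equal 1}, or the tuple $(t_1,a_1,\dots,t_n)$ in Theorem~\ref{Thm:not acyclic degree}. The forced-order argument is sound: a nonzero product is a monotone walk of indices from $1$ to $n$, which must absorb each $e_{ii}$ while it sits at $i$. It yields the stronger fact that $M_n(\mathbb{K})$ has no nonzero identity of any degree $<2n$.

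\textbf{Trade-off.} Citing keeps the paper focused on quivers. Your route makes the result verifiable with tools (unique nonzero products, char-$0$ linearization) that the paper already uses.

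One slip to correct: the Leibniz formula gives $St_{2n-1}\in\langle St_m\rangle_{T_+}$ for $m\le 2n-1$, not $St_m\in\langle St_{2n-1}\rangle_{T_+}$. As written, the containment points the wrong way for your reduction. The slip is harmless, because you separately handle every degree $m<2n-1$ with an initial segment of the staircase.
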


\subsection{Quivers and path algebras}

In this subsection we fix notation and recall some preliminaries about quivers and path algebras. Standard references are \cite{MR2197389}, \cite{MR3727119}, and \cite{MR3308668}.

A quiver $Q$ is a quadruple $Q = (Q_0, Q_1, s, t)$ where
\begin{itemize}
\item $Q_0$ is a finite set of vertices,
\item $Q_1$ is a finite set of arrows,
\item $s, t: Q_1 \to Q_0$ are two functions. For an arrow $a \in Q_1$, $s(a)$ and $t(a)$ are called the starting vertex and the terminal vertex of $a$, respectively.
\end{itemize}

\begin{rem}
Throughout this paper, for a quiver $Q$, we always assume that $Q_0$ and $Q_1$ are non-empty.
\end{rem}

\begin{ex}
A quiver whose underlying graph is the $A_3$ Dynkin graph is given by
\begin{center}
\begin{tikzcd}
1 \arrow[r, "\alpha"] & 2 \arrow[r, "\beta"] & 3
\end{tikzcd}
\end{center}
where $Q_0 = \{1,2,3\}$, $Q_1 = \{\alpha,\beta\}$, $s(\alpha)=1$, $t(\alpha)=2=s(\beta)$, $t(\beta)=3$.
\end{ex}

A path $p$ in a quiver $Q$ of length $n = \ell(p) \ge 1$ is a sequence $p = \alpha_1 \alpha_2 \cdots \alpha_n$ of $n$ arrows such that $t(\alpha_i) = s(\alpha_{i+1})$ for $i=1,2,\dots,n-1$; the starting vertex of $p$ is $s(p) = s(\alpha_1)$ and the terminal vertex of $p$ is $t(p) = t(\alpha_n)$. For every vertex $v \in Q_0$, we introduce a trivial path $e_v$ of length zero and define $s(e_v)=t(e_v)=v$. An oriented cycle in $Q$ is a non-trivial path $c$ with $t(c)=s(c)$. The cyclic subquiver with respect to an oriented cycle $c$ is the subquiver of $Q$ whose arrows are those contained in $c$ and whose vertices are the starting and terminal vertices of those arrows. An oriented cycle with $n$ vertices $\{1,2,\dots,n\}$ and $n$ arrows $\{a_1,a_2,\dots,a_n\}$ satisfying $t(a_i)=s(a_{i+1})=i+1$ for $i=1,\dots,n-1$ and $s(a_1)=t(a_n)=1$ is called an $n$-circle and is denoted by $C_n$. The cyclic subquiver with respect to an $n$-circle is also called an $n$-circle. A loop is a $1$-circle. A quiver without oriented cycles is said to be acyclic.

Given a quiver $Q$ and a field $\mathbb{K}$, the path algebra $\mathbb{K}Q$ of $Q$ is the $\mathbb{K}$-algebra with a basis labeled by all paths in $Q$, including trivial paths. The product of two paths $p$ and $q$ is the concatenation $pq$ if $t(p)=s(q)$ and zero otherwise. The multiplication in $\mathbb{K}Q$ is defined by extending the path multiplication linearly. A finite quiver $Q$ is acyclic if and only if $\mathbb{K}Q$ is finite-dimensional.

The principal subalgebra $\mathbb{K}Q_{\ge 1}$ of $\mathbb{K}Q$ is the subalgebra with a basis labeled by all non-trivial paths in $Q$. If $Q$ is a finite acyclic quiver, then $\mathbb{K}Q_{\ge 1}$ is the Jacobson radical of $\mathbb{K}Q$.

\begin{defi}
A quiver is called a $PI$-quiver if its path algebra is $PI$.
\end{defi}

Cerulli Irelli, De Loera Ch\'{a}vez, and Pascucci \cite{ART_2026__3_2_165_0} gave a necessary and sufficient condition for the path algebra of a finite quiver to be $PI$, and also described  the ideal of polynomial identities of the path algebra of a finite acyclic quiver.

\begin{teo}\label{Thm:PIQuivers}
A quiver $Q$ is $PI$ if and only if each vertex of $Q$ lies in at most one cyclic subquiver.
\end{teo}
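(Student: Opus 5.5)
We prove the two implications of Theorem~\ref{Thm:PIQuivers} separately. Both reduce to the structure of the cyclic subquivers of $Q$: in one direction we exhibit a copy of a non-$PI$ algebra inside $\mathbb{K}Q$, and in the other we realize $\mathbb{K}Q$ inside a matrix algebra over a commutative ring.

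\emph{Necessity.} Suppose some vertex $v$ lies in two distinct cyclic subquivers. Then there are two oriented cycles $c_1 \neq c_2$ through $v$, which we may take to start and end at $v$, and neither is a power of a common cycle. Consider the subalgebra of $\mathbb{K}Q$ generated by $c_1$ and $c_2$, with unit $e_v$. Because paths form a basis and concatenation is free, distinct words in $c_1, c_2$ give distinct paths. Here one may need to replace $c_1, c_2$ by suitable cycles, such as $c_1$ and $c_1c_2$, so that the two elements are not prefix-related. With that choice the map $\mathbb{K}\langle x,y\rangle \to \mathbb{K}Q$ is injective, by unique factorization of paths into arrows. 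By Non-example~\ref{Nex:FreeAlgebra} this subalgebra satisfies no nonzero identity. Since subalgebras of $PI$ algebras are $PI$, it follows that $\mathbb{K}Q$ is not $PI$.

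\emph{Sufficiency.} Assume every vertex lies in at most one cyclic subquiver. Then the cyclic subquivers are vertex-disjoint, and each one is an $n$-circle. Indeed, a cyclic subquiver that is not a circle would give a vertex with two different cycles through it. Let $N=|Q_0|$. We embed $\mathbb{K}Q$ into $M_N(R)$, where $R=\mathbb{K}[t_1,\dots,t_r]$ has one variable for each circle. Send $e_v \mapsto E_{vv}$, and send an arrow $a$ from $u$ to $w$ to $E_{uw}$ times a monomial chosen as follows. For an arrow lying on the $j$-th circle, attach $t_j$ to exactly one arrow of that circle, for instance its first arrow, and $1$ to the others; arrows lying on no circle get $1$.

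The key claim is that this map is injective. It suffices to show that distinct paths from $u$ to $w$ map to distinct monomials. Because the circles are vertex-disjoint and no other cycles exist, the quotient of $Q$ obtained by collapsing each circle to a point is acyclic. Hence a path from $u$ to $w$ is determined by the sequence of circles it visits, how it enters and leaves each circle, and the number of full turns it makes around each. The route between circles is a path in the acyclic condensation, and it need not be unique. To handle this, instead attach a distinct commuting variable to every non-circle arrow as well. Then the monomial records the multiset of arrows used. Acyclicity of the condensation together with the circle structure shows that this multiset, with the endpoints, determines the path: the order in which the arrows occur is forced by the fact that the condensation is acyclic.

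Given injectivity, $\mathbb{K}Q \hookrightarrow M_N(R)$. Since $R$ is commutative, Theorem~\ref{Thm:Amitsur-Levitski} in its version over commutative rings gives $St_{2N} \in \id{\mathbb{K}Q}$, so $\mathbb{K}Q$ is $PI$.

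The main obstacle is this injectivity claim, that paths are determined by their arrow multisets and endpoints. Only within a circle can arrows repeat, and the number of turns is recovered from the exponents. The remaining care in the necessity direction is choosing cycles that generate a free subalgebra.
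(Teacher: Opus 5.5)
The paper does not prove this statement. It is quoted from Cerulli Irelli, De Loera Ch\'avez and Pascucci, and the only fragment of an argument the paper itself uses is your necessity step, in the proof of Theorem~\ref{Thm:PI principal}: two cycles through a common vertex generate a free subalgebra. Your proposal is therefore a self-contained proof. It is correct in substance, apart from one local step that needs repair (below).

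For sufficiency, you build a faithful monomial representation of $\mathbb{K}Q$ in $M_N(R)$, with $R$ a commutative polynomial ring, and then apply Amitsur--Levitzki. This gives the explicit identity $St_{2|Q_0|}$. The same embedding also yields $\id{\mathbb{K}Q}=\id{A_\pi}$ for $\pi$ the set of all paths, i.e.\ this case of Theorem~\ref{Thm:incidence}, from two further facts:
\begin{itemize}
\item the map $p\mapsto E_{s(p)t(p)}$ is a surjection $\mathbb{K}Q\to A_\pi$;
\item your image lies in $A_\pi\otimes R$, and over an infinite field this has the same identities as $A_\pi$.
\end{itemize}

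Your injectivity claim is true, but spell out why.
\begin{itemize}
\item Every non-circle arrow occurs at most once in a path.
\item A path meets each circle in a single contiguous segment, since leaving and re-entering a circle would create a cycle in the condensation.
\item Hence the non-circle arrows, ordered by acyclicity of the condensation, fix the entry and exit vertex on each circle.
\item The exponent of $t_j$ then fixes the length of that segment.
\end{itemize}
Giving every arrow its own variable makes this bookkeeping cleanest.

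The step to repair is in the necessity direction. ``Concatenation is free'' does not by itself make two cycles free generators; for instance $a$ and $a^2$ are not. Your suggested remedy is also backwards: $c_1$ is a prefix of $c_1c_2$, so $\{c_1,c_1c_2\}$ is precisely prefix-related. There are two easy fixes.
\begin{itemize}
\item Take $c_1\neq c_2$ to be \emph{first-return} cycles at $v$, meaning cycles that do not pass through $v$ in their interior. Two distinct ones exist: every cycle at $v$ factors uniquely into first-return cycles, so if there were only one, say $d$, every cycle at $v$ would be a power of $d$ and $v$ would lie in a single cyclic subquiver. No first-return cycle is a proper prefix of another, so $\{c_1,c_2\}$ is a prefix code and $\mathbb{K}\langle x,y\rangle\to\mathbb{K}Q$, $x\mapsto c_1$, $y\mapsto c_2$, is injective.
\item Alternatively, use the classical fact that $\{u,w\}$ is a code if and only if $uw\neq wu$. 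This holds for your original $c_1,c_2$, because cycles with different arrow sets cannot be powers of a common word.
\end{itemize}
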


\begin{thm}\label{Thm:AcyclicQuivers}
Let $\mathbb{K}$ be a field of characteristic zero and let $Q$ be an acyclic quiver. Then
\[
\id{\mathbb{K}Q} = \langle [x_1,x_2][x_3,x_4]\cdots[x_{2n-1},x_{2n}] \rangle_T,
\]
where $n-1$ is the maximal length of paths in $Q$.
\end{thm}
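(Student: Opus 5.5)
The plan is to prove the two inclusions separately. The upper bound $\id{\mathbb{K}Q}\subseteq\langle\cdots\rangle_T$ will come from embedding an upper triangular matrix algebra into $\mathbb{K}Q$ and quoting Maltsev's theorem. The lower bound will come from a radical-nilpotency argument.

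\textbf{Step 1: the product of commutators is an identity.} Write $\mathbb{K}Q=S\oplus J$, where $S=\operatorname{span}\{e_v\mid v\in Q_0\}\cong\mathbb{K}^{|Q_0|}$ and $J=\mathbb{K}Q_{\ge 1}$ is the Jacobson radical. Since $Q$ is acyclic with longest paths of length $n-1$, every product of $n$ non-trivial paths has length at least $n$ and hence vanishes. So $J^n=0$. Because $\mathbb{K}Q/J\cong S$ is commutative, every commutator $[a,b]$ with $a,b\in\mathbb{K}Q$ lies in $J$. Therefore
\[
[a_1,a_2][a_3,a_4]\cdots[a_{2n-1},a_{2n}]\in J^n=0
\]
for all $a_i\in\mathbb{K}Q$. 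Since $\id{\mathbb{K}Q}$ is a T-ideal, this gives $\langle [x_1,x_2]\cdots[x_{2n-1},x_{2n}]\rangle_T\subseteq\id{\mathbb{K}Q}$.

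\textbf{Step 2: a copy of $UT_n(\mathbb{K})$ inside $\mathbb{K}Q$.} Fix a path $p=\alpha_1\cdots\alpha_{n-1}$ of maximal length $n-1$. Its vertices $v_1=s(\alpha_1),\ v_{i+1}=t(\alpha_i)$ are pairwise distinct, since $Q$ is acyclic. Let $B$ be the span of $e_{v_1},\dots,e_{v_n}$ together with all subpaths $\alpha_i\alpha_{i+1}\cdots\alpha_j$ of $p$. The product of two subpaths is either zero or the concatenation $\alpha_i\cdots\alpha_j\alpha_{j+1}\cdots\alpha_k$, which is again a subpath. Products with the idempotents also stay in $B$, so $B$ is a subalgebra.

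Because the $v_i$ are distinct, for $i\le j$ there is exactly one basis element of $B$ from $v_i$ to $v_j$. Hence $B$ is the path algebra of the linearly oriented $A_n$ quiver. Sending the subpath from $v_i$ to $v_j$ to $E_{ij}$ (and $e_{v_i}$ to $E_{ii}$) gives an isomorphism $B\cong UT_n(\mathbb{K})$. By the subalgebra property, $\id{\mathbb{K}Q}\subseteq\id{B}=\id{UT_n(\mathbb{K})}$.

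\textbf{Step 3: the main obstacle.} It remains to invoke Maltsev's theorem: in characteristic zero, $\id{UT_n(\mathbb{K})}=\langle [x_1,x_2]\cdots[x_{2n-1},x_{2n}]\rangle_T$. Combined with Steps 1 and 2 this yields the stated equality. This is where the real content lies, and I would quote it as a classical result.

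If a self-contained argument were wanted, I would proceed as follows.
\begin{itemize}
\item Use Theorem~\ref{char 0 ordinary} and the standard reduction (unital algebra, characteristic zero) to proper multilinear polynomials, i.e.\ linear combinations of products of commutators.
\item Show that modulo the T-ideal generated by the product of $n$ commutators, the proper multilinear polynomials of each degree are spanned by explicit products of at most $n-1$ commutators in a normal form.
\item Show that these normal-form products are linearly independent on $UT_n(\mathbb{K})$, using substitutions by the matrix units $E_{ij}$ and $E_{ii}$. These substitutions are chosen, in the spirit of Lemma~\ref{lem:unique nonzero}, so that exactly one normal-form product survives.
\end{itemize}
The independence computation is the delicate part.
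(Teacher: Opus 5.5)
Your proof is correct. The paper itself does not prove this statement; it imports it from \cite{ART_2026__3_2_165_0}. The only internal point of comparison is therefore the paper's proof of the non-unital analogue, Theorem~\ref{Thm:acyclic T-ideal}, and your argument has exactly that sandwich shape.

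The lower bound $\langle [x_1,x_2]\cdots[x_{2n-1},x_{2n}]\rangle_T\subseteq\id{\mathbb{K}Q}$ comes from $J^n=0$ together with the commutativity of $\mathbb{K}Q/J$, which forces every commutator into $J$. The upper bound comes from the copy of the linearly oriented $A_n$ spanned by the subpaths of a longest path.

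The difference lies in what closes the sandwich. For $\mathbb{K}\overrightarrow{A_n}_{\ge 1}$ the identities are simply the polynomials all of whose monomials have degree $\ge n$ (Proposition~\ref{prop:An T-ideal}), which is elementary. For $UT_n(\mathbb{K})$ you genuinely need Maltsev's theorem. Quoting it is legitimate. Your optional sketch via proper polynomials is the standard route to that theorem, but as written it only names the hard step (linear independence of the normal-form products on $UT_n(\mathbb{K})$). So the citation is what actually carries Step~3.

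One detail in Step~2 deserves to be stated explicitly. $B$ is not a unital subalgebra of $\mathbb{K}Q$: its identity is $\sum_i e_{v_i}$, not $1$. Maltsev's theorem, however, concerns $UT_n(\mathbb{K})$ as a unital algebra. This is harmless. Any identity of the unital algebra $\mathbb{K}Q$ has zero constant term (evaluate at $x_i=0$). For such polynomials, evaluation inside $B$ and inside $\mathbb{K}Q$ coincide, so indeed $\id{\mathbb{K}Q}\subseteq\id{UT_n(\mathbb{K})}$.
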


For $PI$-quivers, Berele et al.~\cite{MR5062649} studied the relationship between  the ideals of polynomial identities of subalgebras of path algebras and those of subalgebras of matrix algebras.

\begin{thm}\label{Thm:incidence}
If $Q$ is $PI$, then $\id{\mathbb{K}Q_\pi} = \id{A_\pi}$ for a set $\pi$, where $\pi$ is an arbitrary set of paths in $Q$, $\tilde{\pi}$ is the set of all paths in $Q$ that are products of elements of $\pi$, $\mathbb{K}Q_\pi$ is the linear span of $\tilde{\pi}$, and
\[
A_\pi = \{ A \in M_n(\mathbb{K}) \mid A_{i,j}=0 \text{ if there are no paths in } \tilde{\pi} \text{ from } i \text{ to } j \}.
\]
\end{thm}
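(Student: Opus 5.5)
The plan is to compare $\mathbb{K}Q_\pi$ and $A_\pi$ through two homomorphisms: a surjection $\mathbb{K}Q_\pi\to A_\pi$, which gives one inclusion of T-ideals, and an embedding of $\mathbb{K}Q_\pi$ into $A_\pi\otimes_{\mathbb{K}}C$ with $C$ commutative, which gives the other. Number the vertices $Q_0=\{1,\dots,n\}$ and write $e_{ij}$ for the matrix units. By definition $A_\pi$ is spanned by those $e_{ij}$ for which $\tilde\pi$ contains a path from $i$ to $j$. It is a subalgebra of $M_n(\mathbb{K})$ because $\tilde\pi$ is closed under (nonzero) concatenation.

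\emph{Inclusion $\id{\mathbb{K}Q_\pi}\subseteq\id{A_\pi}$.} Define $\varphi:\mathbb{K}Q_\pi\to A_\pi$ on the basis $\tilde\pi$ by $\varphi(p)=e_{s(p)t(p)}$.
\begin{itemize}
\item It is multiplicative: $e_{s(p)t(p)}e_{s(q)t(q)}$ equals $e_{s(p)t(q)}$ exactly when $t(p)=s(q)$, that is, exactly when $pq\neq 0$, and otherwise both sides vanish.
\item It is surjective by the definition of $A_\pi$.
\end{itemize}
So $A_\pi$ is a quotient of $\mathbb{K}Q_\pi$, and the quotient property from the preliminaries gives this inclusion. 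This step does not use that $Q$ is $PI$.

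\emph{Inclusion $\id{A_\pi}\subseteq\id{\mathbb{K}Q_\pi}$.} Let $C=\mathbb{K}[t_a\mid a\in Q_1]$ be the commutative polynomial ring. Define $\psi:\mathbb{K}Q_\pi\to A_\pi\otimes C$ by $\psi(p)=e_{s(p)t(p)}\otimes m_p$, where $m_p=\prod_k t_{\alpha_k}$ for $p=\alpha_1\cdots\alpha_\ell$. This is a homomorphism, since $m_{pq}=m_pm_q$.

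Next, $A_\pi\otimes C$ satisfies every identity of $A_\pi$. By Theorem~\ref{char 0 ordinary} it suffices to check multilinear $f$. Such an $f$ vanishes on elements $a\otimes c$ because $C$ is commutative, and then on all of $A_\pi\otimes C$ by multilinearity. Hence, once $\psi$ is injective, $\mathbb{K}Q_\pi$ is isomorphic to a subalgebra of $A_\pi\otimes C$, and the subalgebra property gives the inclusion.

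\emph{Main obstacle: injectivity of $\psi$.} This is the only place where the $PI$ hypothesis should enter. Images of paths with different endpoints land in different matrix units, and distinct monomials are linearly independent in $C$. So injectivity reduces to the combinatorial claim that two distinct paths with the same start and end have distinct arrow multisets.

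I would prove this claim with Theorem~\ref{Thm:PIQuivers}: each vertex lies in at most one cyclic subquiver, so the cyclic subquivers are vertex-disjoint circles. Contracting each circle to a point yields an acyclic quiver. A path from $i$ to $j$ then consists of the following data:
\begin{itemize}
\item a sequence of non-cycle arrows, forming a path in the contracted acyclic quiver;
\item for each circle entered, an entry vertex, an exit vertex, and a number of full windings.
\end{itemize}
From the arrow multiset one reads off the set of non-cycle arrows used. Since the contracted quiver is acyclic and each circle is visited in one contiguous stretch, these arrows can be concatenated in only one order. That order fixes the entry and exit vertex on each circle. The multiplicities of the cycle arrows then fix the number of windings. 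Hence the path is recovered from its arrow multiset.

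The point to check carefully is that a circle cannot be left and re-entered. A re-entry would produce a second oriented cycle through a vertex of that circle, contradicting Theorem~\ref{Thm:PIQuivers}.
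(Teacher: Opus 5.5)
The paper does not prove Theorem~\ref{Thm:incidence}. It is quoted from Berele, Cerulli Irelli, De Loera Ch\'avez and Pascucci~\cite{MR5062649}, so there is no in-paper argument to compare against, and your proposal has to stand on its own. It does: it is a correct and complete proof along the natural generic-matrix route.

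The surjection $p\mapsto e_{s(p)t(p)}$ gives $\id{\mathbb{K}Q_\pi}\subseteq\id{A_\pi}$ for every quiver. The map $p\mapsto e_{s(p)t(p)}\otimes m_p$ gives the reverse inclusion once it is injective. You correctly identify injectivity as the only place the $PI$ hypothesis enters, and the hypothesis is exactly what that step needs. In a non-$PI$ quiver there are cycles $c_1,c_2$ at a common vertex with different arrow sets. Then $c_1c_2\neq c_2c_1$ (commuting words are powers of a common word), yet they have the same endpoints and the same arrow multiset, so $\psi$ fails to be injective on $\mathbb{K}Q$.

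You should justify, rather than just assert, that the cyclic subquivers of a $PI$ quiver are \emph{circles}, not merely pairwise vertex-disjoint. One line suffices. If an oriented cycle $c$ repeats a vertex, a shortest closed sub-segment of $c$ is a simple cycle $c'$ through a vertex of $c$. If $c'$ uses every arrow of $c$, then $c$ is a power of a rotation of $c'$. Otherwise the cyclic subquivers of $c$ and $c'$ are distinct and share a vertex, which Theorem~\ref{Thm:PIQuivers} forbids. Your re-entry remark then rules out chords as the length-one case. The rest of your reconstruction of a path from its endpoints and arrow multiset is fine.

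A side benefit of your argument: the only step that uses anything about $\mathbb{K}$ is $\id{A_\pi}\subseteq\id{A_\pi\otimes C}$, and there an infinite field suffices in place of characteristic zero.
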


\section{Connectivity of quivers}

In this section, we recall some preliminaries about various types of connectivity of quivers; a standard reference is \cite{MR1408678}. We introduce a new type of connectivity for quivers, called algebraic connectivity, and examine its relationship with other types. We then discuss the influence of algebraic connectivity of a quiver $Q$ on the center and 3-center of the principal subalgebra of $\mathbb{K}Q$. In this section, we always consider finite quivers without isolated points.

\begin{defi}
A finite quiver $Q$ is said to be strongly connected if for every pair $x,y$ of distinct vertices in $Q$, there exists a path $p$ with $s(p)=x$, $t(p)=y$ and a path $q$ with $s(q)=y$, $t(q)=x$; $Q$ is said to be unilaterally connected if for every pair $x,y$ of distinct vertices in $Q$, there exists a path $p$ with $s(p)=x$, $t(p)=y$ or a path $q$ with $s(q)=y$, $t(q)=x$; $Q$ is said to be connected if it is connected as an undirected graph.
\end{defi}

Equivalent definitions of strong and unilateral connectivity were given in \cite{MR1408678}.

\begin{thm}\label{Thm:equivalent def}
 (\romannumeral1) A finite quiver $Q$ is strongly connected if and only if there exists a path of $Q$ that contains all the vertices of $Q$ such that $t(p)=s(p)$.

(\romannumeral2) A finite quiver $Q$ is unilaterally connected if and only if there exists a path of $Q$ that contains all the vertices of $Q$.
\end{thm}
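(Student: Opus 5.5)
The statement to prove is Theorem~\ref{Thm:equivalent def}. I will treat parts (i) and (ii) separately. In each part the ``if'' direction is immediate and the ``only if'' direction is a concatenation argument, using that $Q$ is finite and has no isolated points.

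For (i), suppose first that $p$ is a closed path through every vertex. Given distinct vertices $x,y$, both occur on $p$. Since $t(p)=s(p)$, we may travel along $p$ from an occurrence of $x$ to an occurrence of $y$, wrapping around through $s(p)$ if needed. This yields a path from $x$ to $y$, and symmetrically one from $y$ to $x$. Conversely, suppose $Q$ is strongly connected and enumerate its vertices $v_1,\dots,v_m$. If $m\ge 2$, pick paths $p_i$ from $v_i$ to $v_{i+1}$ for $i<m$ and a path $p_m$ from $v_m$ back to $v_1$. Their concatenation $p_1p_2\cdots p_m$ is a non-trivial closed path through all vertices. If $m=1$, there is no isolated point, so there is an arrow, which must be a loop at $v_1$; this loop is the required path.

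For (ii), suppose a path $p$ contains all vertices. For distinct $x,y$, consider their first occurrences along $p$; whichever comes first gives a subpath from that vertex to the other. Hence $Q$ is unilaterally connected.

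The converse of (ii) is the main obstacle, because we may not simply concatenate in an arbitrary order. I plan to use reachability as a preorder: $x\le y$ if there is a (possibly trivial) path from $x$ to $y$. Unilateral connectivity says this preorder is total. I will order the vertices $v_1\le v_2\le\cdots\le v_m$ along a linear extension, for example by sorting the strong components, which are totally ordered, and listing vertices within each component. I then pick paths from $v_i$ to $v_{i+1}$. When $v_i=v_{i+1}$ or two consecutive vertices coincide, no path is needed; when they are distinct, such a path exists since $v_i\le v_{i+1}$.

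Concatenating these paths gives a path visiting all vertices, which is non-trivial as soon as $m\ge 2$. If $m=1$, there is a loop (no isolated points), and we take it. The only care needed is that the concatenation is well-defined and non-trivial, which the construction ensures.
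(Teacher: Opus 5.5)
Your proof is correct. The paper gives no argument for this statement; it quotes it as a known fact from Chartrand and Lesniak. So there is no in-paper proof to compare with, and your write-up supplies a complete, self-contained proof.

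Part (i) and the easy direction of (ii) are the standard concatenation and subpath arguments. The converse of (ii) is the only place with real content, and ordering the strong components handles it cleanly. Spell out the one-line justification that this order is well defined and total. For components $C\neq D$, whether a vertex of $C$ reaches a vertex of $D$ does not depend on the representatives chosen. It cannot hold in both directions, since then $C=D$. It holds in at least one direction by unilateral connectivity, and it is transitive because reachability is.

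With that in place, consecutive vertices $v_i,v_{i+1}$ in your listing fall into two cases. Either they lie in the same component and are mutually reachable, or they lie in consecutive components $C_j\prec C_{j+1}$. In both cases a path $v_i\to v_{i+1}$ exists.

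Two small tidy-ups. Your remark about the case $v_i=v_{i+1}$ is vacuous, since the enumerated vertices are distinct. In the case $m=1$, the paper's conventions already count the trivial path $e_{v_1}$ as a path. Taking a loop instead, which exists because $Q_1\neq\emptyset$, is equally fine.
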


We see that
\begin{align*}
&\{\text{strongly connected quivers}\} \\
\subsetneq &\{\text{unilaterally connected quivers}\} \\
\subsetneq &\{\text{connected quivers}\}.
\end{align*}

We now introduce a new type of connectivity for quivers, called algebraic connectivity.

\begin{defi}
A finite quiver $Q$ is said to be algebraically connected if $\mathbb{K}Q_{\ge 1}$ is not isomorphic to a direct sum of more than one principal subalgebra of path algebras of quivers.

If a finite quiver $Q$ is not algebraically connected, since $Q$ is finite, we can write $\mathbb{K}Q_{\ge 1} \cong \bigoplus_{i=1}^n \mathbb{K}{E_i}_{\ge 1}$, where each $E_i$ is a finite algebraically connected subquiver of $Q$. These $E_i$ are called the algebraically connected components of $Q$.
\end{defi}

We now describe the relationship between these four types of connectivity.

\begin{prop}\label{prop:connectivity}
(\romannumeral1)
\begin{align*}
&\{\text{strongly connected quivers}\} \\
\subsetneq &\{\text{algebraically connected quivers}\} \\
\subsetneq &\{\text{connected quivers}\}.
\end{align*}

(\romannumeral2) There exist algebraically connected quivers which are not unilaterally connected.

(\romannumeral3) There exist unilaterally connected quivers which are not algebraically connected.
\end{prop}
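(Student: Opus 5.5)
The plan is to prove the two inclusions in (i) separately, and then to settle (i), (ii) and (iii) with explicit small quivers.

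For \emph{strongly connected $\Rightarrow$ algebraically connected}, suppose $Q$ is strongly connected and $\mathbb{K}Q_{\ge 1}\cong A\oplus B$ with $A,B$ non-zero principal subalgebras. Then $AB=BA=0$. The first step is to show the splitting is compatible with paths. I would pass to $J/J^2$, where $J=\mathbb{K}Q_{\ge1}$; this is spanned by the arrows, and $J^2$ decomposes compatibly with the decomposition. A cleaner option is to argue by annihilators: the two-sided annihilator data determines the decomposition. Concretely, I would take any arrow $a$ and write $a=a_A+a_B$. By Theorem~\ref{Thm:equivalent def}(i) there is a closed path $p$ through every vertex. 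Any two arrows $a,b$ then occur in a non-zero product $a\,u\,b$ with $u$ a path, possibly trivial. If $a$ and $b$ lay in different summands, $aub$ would be killed, since the products $A\cdot J\subseteq A$ and $B\cdot J\subseteq B$ cross to zero.

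The main obstacle is that an abstract isomorphism need not send arrows into a single summand. I would handle it through a quiver-intrinsic reformulation. The map $x\mapsto(x_A,x_B)$ is an algebra isomorphism, and $J^n$ is preserved by it. Since $J/J^2$ is the arrow space, the summands give $J/J^2=A/A^2\oplus B/B^2$. Moreover $A$ and $B$ are generated by their degree-one parts modulo higher powers, because each is itself of the form $\mathbb{K}E_{\ge1}$. Next I would use strong connectivity to show that for every non-zero $x\in J\setminus J^2$ and every $y\in J\setminus J^2$ there exists $u$ with $xuy\notin J^{N}$ for suitable $N$. This comes from the cyclic path $p$: taking leading terms, $x\,p^k\,y$ has a non-zero component of the appropriate length. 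Such an element cannot exist when $x\in A$ and $y\in B$, which is a contradiction. So strongly connected quivers are algebraically connected.

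For \emph{algebraically connected $\Rightarrow$ connected}, take $Q=Q'\sqcup Q''$ disconnected. Then $\mathbb{K}Q_{\ge1}=\mathbb{K}Q'_{\ge1}\oplus\mathbb{K}Q''_{\ge1}$, because no path crosses components. Hence $Q$ is not algebraically connected.

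For the strictness claims and for (ii) and (iii), I would use explicit quivers.
\begin{itemize}
\item The single arrow $1\to 2$ gives $\mathbb{K}Q_{\ge1}=\mathbb{K}a$ with $a^2=0$. This is one-dimensional and so indecomposable, hence algebraically connected but not strongly connected.
\item The quiver $1\to2\leftarrow3$ with arrows $a,b$ gives $\mathbb{K}Q_{\ge1}=\mathbb{K}a\oplus\mathbb{K}b$ with all products zero. It is connected but algebraically disconnected. It is also not unilaterally connected, so it does not serve for (iii).
\item For (iii), I would take $1\xrightarrow{a}2\xrightarrow{b}3$ together with an extra arrow $c\colon 1\to3$. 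This is unilaterally connected, and the paths are $a,b,c,ab$. Since $ab$ is a product, I need a better example: the $A_2$ quiver $1\xrightarrow{a}2$ plus $1\xrightarrow{c}2$ is unilaterally connected, while $\mathbb{K}Q_{\ge1}=\mathbb{K}a\oplus\mathbb{K}c$ with zero multiplication decomposes. This gives (iii).
\item For (ii), I would use $1\to2\leftarrow3$ with an additional loop at $2$. Its algebra has $a\ell^k$ and $b\ell^k$ linked through the loop, so a dimension/indecomposability check of the radical filtration shows it is algebraically connected. It is not unilaterally connected, because $1$ and $3$ are incomparable.

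\end{itemize}
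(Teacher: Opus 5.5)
Your proof is correct. For the one non-trivial inclusion (strongly connected $\Rightarrow$ algebraically connected) it takes a genuinely different route from the paper.

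\textbf{Comparison of the two arguments.} The paper argues by linkage: every arrow not on the spanning closed path $p$ has a non-zero product with some arrow of $p$, so ``the algebraically connected component containing $p$ is all of $Q$''. This tacitly assumes that an abstract isomorphism $\mathbb{K}Q_{\ge1}\cong\bigoplus_i\mathbb{K}{E_i}_{\ge1}$ distributes the arrows of $Q$ among the summands. That is exactly the obstacle you point out.

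Your $J/J^2$ argument removes that assumption:
\begin{itemize}
\item From $J=A\oplus B$ with $AB=BA=0$ one gets $J^2=A^2\oplus B^2$. Hence $A\cap J^2=A^2\neq A$ and $B\cap J^2=B^2\neq B$.
\item So you may pick $x\in A\setminus J^2$ and $y\in B\setminus J^2$.
\item Take a path $u$ with $x_1uy_1\neq 0$, where $x_1,y_1$ are the degree-one components. Since $x_1uy_1$ is the lowest-degree component of $xuy$, this gives $xuy\neq 0$, contradicting $xuy\in AB=0$.
\end{itemize}
This proves more than the statement: $\mathbb{K}Q_{\ge1}$ has no decomposition into two non-zero ideals with zero cross products at all, whether or not the summands are principal subalgebras. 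Even $A\neq A^2$ is automatic, since $A=A^2$ would give $A\subseteq\bigcap_n J^n=0$. The paper's argument is shorter and more visual, but only yours actually handles isomorphisms that do not respect the path grading.

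\textbf{Examples.} The disconnected case and the example $1\to 2\leftarrow 3$ coincide with the paper. For the first strictness, $\overrightarrow{A_2}$ serves as well as the paper's $\overrightarrow{A_3}$.

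\textbf{Points to tighten.}
\begin{itemize}
\item Do not take ``$x\,p^k\,y$'' literally. If the arrows occurring in $x_1$ do not end at $s(p)$, its lowest term vanishes. Instead, choose arrows $\alpha,\beta$ in the supports of $x_1,y_1$, and let $u$ be a subpath of $p^2$ from $t(\alpha)$ to $s(\beta)$; use $xy$ itself if $t(\alpha)=s(\beta)$. Distinct pairs $(\alpha',\beta')$ give distinct paths $\alpha' u\beta'$, so the coefficient of $\alpha u\beta$ in $x_1uy_1$ is non-zero.
\item For (iii), your rejection of the triangle $1\xrightarrow{a}2\xrightarrow{b}3$, $c\colon 1\to 3$ is unfounded; it is exactly the paper's example. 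Here $cJ=Jc=0$, and $\operatorname{span}\{a,b,ab\}\cong\mathbb{K}\overrightarrow{A_3}_{\ge1}$ is a complementary ideal. Your Kronecker quiver is nonetheless a valid alternative.
\item For (ii), where the paper uses $1\to2\to3$, $2\to4$, your ``radical filtration check'' must be written out. The same degree-one trick does it. Take $x\in A\setminus J^2$ and $y\in B\setminus J^2$, with degree-one parts $x_1,y_1\in\operatorname{span}\{a,b,\ell\}$. The degree-two parts of $xy=0$ and $yx=0$ force the $\ell$-coefficients of $y_1$ and $x_1$ to vanish. So the images of $A$ and $B$, which together span $J/J^2$, lie in the span of the classes of $a$ and $b$, a contradiction.
\end{itemize}
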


\begin{proof}
$(\romannumeral1)$
\[
\begin{tikzcd}
\bullet & \bullet & \bullet && \bullet & \bullet & \bullet
\arrow[from=1-1, to=1-2]
\arrow[from=1-2, to=1-3]
\arrow[from=1-5, to=1-6]
\arrow[from=1-7, to=1-6]
\end{tikzcd}
\]

For the first ``$\subsetneq$'', the quiver on the left of the above figure is algebraically connected but not strongly connected. If a finite quiver $Q$ is strongly connected, by Theorem~\ref{Thm:equivalent def} there exists a path $p$ of $Q$ containing all vertices with $t(p)=s(p)$. Hence for each arrow $a$ not contained in $p$, there exists an arrow $b$ in $p$ such that either $ab \neq 0$ or $ba \neq 0$. Therefore the algebraically connected component containing the cyclic subquiver with respect to $p$ must be all of $Q$, so $Q$ is algebraically connected.

For the second ``$\subsetneq$'', the quiver on the right of the above figure is connected but not algebraically connected. If a finite quiver $Q$ is not connected, then $\mathbb{K}Q_{\ge 1}$ is isomorphic to the direct sum of the principal subalgebras of the connected components of $Q$. Hence algebraically connected quivers must be connected.

$(\romannumeral2)$
\[
\begin{tikzcd}
&& \bullet \\
\bullet & \bullet \\
&& \bullet
\arrow[from=2-1, to=2-2]
\arrow[from=2-2, to=1-3]
\arrow[from=2-2, to=3-3]
\end{tikzcd}
\]
The quiver above is algebraically connected but not unilaterally connected.

$(\romannumeral3)$
\[
\begin{tikzcd}
&& \bullet \\
\bullet \\
&& \bullet
\arrow[from=1-3, to=3-3]
\arrow[from=2-1, to=1-3]
\arrow[from=2-1, to=3-3]
\end{tikzcd}
\]
The quiver above is unilaterally connected but not algebraically connected.
\end{proof}

If a finite quiver $Q$ is not algebraically connected, $\mathbb{K}Q_{\ge 1}$ is the direct sum of the algebras $\mathbb{K}Q^i_{\ge 1}$, where the $Q^i$ are the algebraically connected components of $Q$. Consequently, the center $Z(\mathbb{K}Q_{\ge 1})$ is the direct sum of the centers $Z(\mathbb{K}Q^i_{\ge 1})$, and similarly the 3-center $Z_3(\mathbb{K}Q_{\ge 1})$ is the direct sum of the 3-centers $Z_3(\mathbb{K}Q^i_{\ge 1})$.

\section{Polynomial identities for principal subalgebras of path algebras}

In this section, we give a necessary and sufficient condition for the principal subalgebra of a path algebra of a finite quiver to be $PI$. We also describe the ideal of polynomial identities of the principal subalgebra of a path algebra of a finite acyclic quiver, and consider whether the ideal of polynomial identities of a path algebra of a finite quiver equals that of its principal subalgebra.

\begin{thm}\label{Thm:PI principal}
For a finite quiver $Q$, $\mathbb{K}Q_{\ge 1}$ is $PI$ if and only if $\mathbb{K}Q$ is $PI$.
\end{thm}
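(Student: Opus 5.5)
One direction is immediate. $\mathbb{K}Q_{\ge 1}$ is a subalgebra of $\mathbb{K}Q$, so $\id{\mathbb{K}Q}\subseteq \id{\mathbb{K}Q_{\ge 1}}$. Hence if $\mathbb{K}Q$ is $PI$, then $\mathbb{K}Q_{\ge 1}$ is $PI$.

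For the converse I will argue by contraposition. Suppose $\mathbb{K}Q$ is not $PI$. By Theorem~\ref{Thm:PIQuivers}, some vertex $v$ lies in two distinct cyclic subquivers. Choose two oriented cycles $c_1, c_2$ based at $v$ (rotating them if necessary so that both start and end at $v$) whose cyclic subquivers differ. Then $c_1 \neq c_2$, and neither cycle is a power of the other, since their arrow sets differ. Moreover, I can choose them so that neither is a prefix of the other as paths: if $c_1$ were a prefix of $c_2$, I would replace $c_2$ by the remaining suffix, which is again a cycle at $v$. Concretely, I will take $c_1, c_2$ to be first-return cycles at $v$, that is, cycles that visit $v$ only at their endpoints. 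Two distinct first-return cycles cannot be prefixes of one another. It therefore suffices to note that a vertex on two different cyclic subquivers admits two distinct first-return cycles, because any cycle at $v$ factors into first-return cycles, and if only one first-return cycle existed, every cycle through $v$ would use the same arrow set.

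The key step is to show that $c_1, c_2$ generate a free subalgebra of $\mathbb{K}Q_{\ge 1}$. Consider the subalgebra generated by $c_1$ and $c_2$. A word $w(c_1,c_2)$ in them is a path, and since first-return cycles are delimited by successive visits to $v$, the factorization of a path at $v$ into first-return cycles is unique. Hence distinct words give distinct paths, and distinct paths are linearly independent basis elements of $\mathbb{K}Q$. So the map $\mathbb{K}\langle y_1,y_2\rangle_+ \to \mathbb{K}Q_{\ge 1}$ sending $y_i \mapsto c_i$ is injective, and its image lies in $\mathbb{K}Q_{\ge 1}$ because $c_1, c_2$ are nontrivial paths.

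Finally, suppose $0 \neq f \in \id{\mathbb{K}Q_{\ge 1}}$. By Theorem~\ref{char 0 ordinary} I may assume $f$ is multilinear, of degree $m$ say. The non-unital free algebra on two generators contains the elements $y_1 y_2^{i} \ (i=1,\dots,m)$, which freely generate a free non-unital subalgebra of countable rank. Substituting these elements into $f$ gives a nonzero result, so $f$ is not an identity of $\mathbb{K}\langle y_1,y_2\rangle_+$. Since this algebra embeds in $\mathbb{K}Q_{\ge 1}$, this is a contradiction, in the spirit of Non-example~\ref{Nex:FreeAlgebra}. I expect the main obstacle to be the combinatorial step of extracting two distinct first-return cycles at a single vertex from the hypothesis in Theorem~\ref{Thm:PIQuivers}. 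Once those cycles are in hand, the unique-factorization argument above handles the rest.
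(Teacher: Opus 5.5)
Your proof is correct and follows the same route as the paper. The easy direction passes to the subalgebra, and the converse uses Theorem~\ref{Thm:PIQuivers} to find two cycles at a common vertex that generate a copy of the non-unital free algebra on two generators, which satisfies no nonzero identity. Your choice of first-return cycles (which gives unique factorization at $v$) and your multilinear substitution $x_i\mapsto y_1y_2^{i}$ supply details the paper only asserts: why $B(c_1,c_2)$ is free, and why its T-ideal is zero.
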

\begin{proof}
If $\mathbb{K}Q$ is $PI$, then $\mathbb{K}Q_{\ge 1}$ is obviously $PI$. We prove the converse.

Assume $\mathbb{K}Q_{\ge 1}$ is $PI$. If $\mathbb{K}Q$ were not $PI$, by Theorem~\ref{Thm:PIQuivers} there would exist a vertex $a$ of $Q$ belonging to two cyclic subquivers $c_1$ and $c_2$. The subalgebra $B(c_1,c_2)$ of $\mathbb{K}Q_{\ge 1}$ generated by $c_1$ and $c_2$ is isomorphic to the subalgebra of the free algebra in two generators consisting of all non-commutative polynomials with constant zero. Then $\id{\mathbb{K}Q_{\ge 1}} \subseteq \id{B(c_1,c_2)} = \{0\}$, so $\mathbb{K}Q_{\ge 1}$ would not be $PI$, a contradiction. Hence $\mathbb{K}Q$ is $PI$.
\end{proof}

Path algebras of acyclic quivers are finite-dimensional, hence $PI$, and so are their principal subalgebras. We can describe the ideal of polynomial identities of these principal subalgebras.

First, consider the quiver of type $A$.

\begin{prop}\label{prop:An T-ideal}
$\id{\mathbb{K}\overrightarrow{A_n}_{\ge 1}} = \langle x_1 x_2 \cdots x_n \rangle_{T_+}$.
\end{prop}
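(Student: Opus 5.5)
The plan is to prove the two inclusions separately. Here $\overrightarrow{A_n}$ is the linearly oriented quiver $1\to 2\to\cdots\to n$ with arrows $\alpha_1,\dots,\alpha_{n-1}$. Its longest path $\alpha_1\cdots\alpha_{n-1}$ has length $n-1$.

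For the inclusion $\langle x_1x_2\cdots x_n\rangle_{T_+}\subseteq \id{\mathbb{K}\overrightarrow{A_n}_{\ge 1}}$, the algebra $\mathbb{K}\overrightarrow{A_n}_{\ge 1}$ is spanned by nontrivial paths. Any product of $n$ nontrivial paths has length at least $n>n-1$, so it vanishes. Hence $x_1\cdots x_n$ is an identity. Since $\id{A}$ is a T-ideal of $\mathbb{K}\langle X\rangle_+$ for a non-unital algebra $A$, the whole T-ideal generated by $x_1\cdots x_n$ lies in $\id{\mathbb{K}\overrightarrow{A_n}_{\ge 1}}$.

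For the reverse inclusion, I would invoke Theorem~\ref{char 0 ordinary}: it suffices to show that every multilinear identity $f$ lies in $\langle x_1\cdots x_n\rangle_{T_+}$. I expect the non-unital version of that theorem to hold by the same multihomogeneous and linearization argument. Write $f=\sum_{\sigma\in S_m}k_\sigma x_{\sigma(1)}\cdots x_{\sigma(m)}$. If $m\ge n$, every monomial of degree $m$ is a substitution instance of $x_1\cdots x_n$, namely $x_1\mapsto x_{\sigma(1)},\dots,x_n\mapsto x_{\sigma(n)}\cdots x_{\sigma(m)}$, so $f$ lies in the T-ideal. If $m\le n-1$, I would apply Lemma~\ref{lem:unique nonzero} with the $m$ elements $a_1=\alpha_1,\dots,a_{m-1}=\alpha_{m-1}$ and $a_m=\alpha_m\cdots\alpha_{n-1}$, or simply $a_i=\alpha_i$ for $i\le m$. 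Each $a_i$ is a nontrivial path, and their only nonzero ordered product is $a_1a_2\cdots a_m$, because concatenation of arrows in $\overrightarrow{A_n}$ requires increasing consecutive indices. The lemma then forces every multilinear identity of degree $m$ to vanish. Therefore all nonzero multilinear identities have degree at least $n$ and lie in $\langle x_1\cdots x_n\rangle_{T_+}$.

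The main obstacle is the reduction to multilinear polynomials in the non-unital setting $\mathbb{K}\langle X\rangle_+$. Theorem~\ref{char 0 ordinary} is stated for $\id{A}$ generally, and I would rely on it directly. If needed, I would justify it by noting that multihomogeneous components of an identity are identities over an infinite field. Full linearization then gives multilinear identities, which recover the original polynomial by identifying variables, and that identification is an endomorphism of $\mathbb{K}\langle X\rangle_+$. A smaller point is checking the uniqueness hypothesis of Lemma~\ref{lem:unique nonzero}. If $\alpha_{i}$ is immediately followed by $\alpha_j$ in a nonzero product, then $j=i+1$, so a nonzero ordering of $\{\alpha_1,\dots,\alpha_m\}$ must be the increasing one.
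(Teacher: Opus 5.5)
Your proof is correct, but it handles the hard inclusion differently from the paper. The paper never reduces to multilinear identities. It takes an arbitrary identity $f$ and kills its low-degree monomials by explicit substitutions. For instance, it sets $x_1=a_1$, $x_2=a_2$ and all other variables equal to the longest path to isolate the coefficient of $x_1x_2$, and it sets $x_i=a_1+a_2$ to handle $x_i^2$. It then says ``continuing this argument'' to conclude that every monomial of a nonzero identity has degree at least $n$. It finishes by identifying $\langle x_1\cdots x_n\rangle_{T_+}$ with the span of all monomials of degree $\ge n$.

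You instead invoke Theorem~\ref{char 0 ordinary}, after checking that the multihomogeneous-component and full-linearization argument stays inside $\mathbb{K}\langle X\rangle_+$ and uses characteristic zero only to divide by factorials. You then apply Lemma~\ref{lem:unique nonzero} to the distinct arrows $\alpha_1,\dots,\alpha_m$. Finally, you show that multilinear monomials of degree $\ge n$ are substitution instances of $x_1\cdots x_n$.

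The core idea is the same in both: arrows taken in order have a unique nonzero product. Your route buys uniformity and rigor. The paper's ad hoc substitutions leave implicit how to separate monomials with repeated variables in degrees $3,\dots,n-1$, such as $x_1^2x_2$ versus $x_1x_2x_1$, and linearization handles exactly this. The paper's route avoids the multilinearization machinery. It also states directly that the identities are exactly the span of monomials of degree at least $n$, which your argument yields only implicitly.
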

\begin{proof}
We first show that for any non-zero polynomial identity $f \in \id{\mathbb{K}\overrightarrow{A_n}_{\ge 1}}$, the degree of every monomial of $f$ is at least $n$.

Observe that for principal subalgebras of path algebras, a monomial of degree $\ell$ is either zero or a linear combination of paths of length $\ge \ell$. Denote the $n-1$ arrows of $\overrightarrow{A_n}$ by $a_1,a_2,\dots,a_{n-1}$ such that $a_1 a_2 \cdots a_{n-1}$ is a path of length $n-1$. Suppose $f$ has monomials of degree one, say $k_1 x_1 + \dots + k_m x_m$. Set $x_1 = a_1$, $x_i = a_1 a_2 \cdots a_{n-1}$ for $i \ge 2$; since $f$ is a polynomial identity, $k_1 a_1 = 0$, so $k_1 = 0$. Similarly $k_1 = \dots = k_m = 0$.

Now suppose $f$ has monomials of degree two. Set $x_1 = a_1$, $x_2 = a_2$, $x_i = a_1 a_2 \cdots a_{n-1}$ for $i \ge 3$; then the coefficient of $x_1 x_2$ is zero. Similarly, the coefficients of all monomials $x_i x_j$ with $i \neq j$ are zero. Setting $x_i = a_1 + a_2$, $x_j = a_1 a_2 \cdots a_{n-1}$ with $j \neq i$ shows that the coefficient of $x_i^2$ is zero. Continuing this argument shows that for any non-zero $f \in \id{\mathbb{K}\overrightarrow{A_n}_{\ge 1}}$, every monomial of $f$ has degree at least $n$.

On the other hand, $\langle x_1 x_2 \cdots x_n \rangle_{T_+}$ consists of zero and all non-commutative polynomials whose monomials all have degree $\ge n$, by the definition of the T-ideal of $\mathbb{K} \langle X \rangle_+$. Hence $\id{\mathbb{K}\overrightarrow{A_n}_{\ge 1}} \subseteq \langle x_1 x_2 \cdots x_n \rangle_{T_+}$. Moreover, $x_1 x_2 \cdots x_n \in \id{\mathbb{K}\overrightarrow{A_n}_{\ge 1}}$, so $\langle x_1 x_2 \cdots x_n \rangle_{T_+} \subseteq \id{\mathbb{K}\overrightarrow{A_n}_{\ge 1}}$.
\end{proof}

\begin{coro}\label{coro:strictly upper triangular matrix}
Let $A$ be the $\mathbb{K}$-algebra of $n \times n$ strictly upper triangular matrices over $\mathbb{K}$. Then $\id{A} = \langle x_1 x_2 \cdots x_n \rangle_{T_+}$.
\end{coro}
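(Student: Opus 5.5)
The plan is to deduce the corollary from Proposition~\ref{prop:An T-ideal} by identifying $A$ with $\mathbb{K}\overrightarrow{A_n}_{\ge 1}$. Label the vertices of $\overrightarrow{A_n}$ by $1,\dots,n$ and the arrows by $a_i\colon i\to i+1$ for $i=1,\dots,n-1$. For $i<j$ there is exactly one non-trivial path from $i$ to $j$, namely $a_i a_{i+1}\cdots a_{j-1}$, and there are no paths from $i$ to $j$ when $i\ge j$ other than the trivial ones.

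\emph{Step 1 (the isomorphism).} Define the linear map $\varphi\colon \mathbb{K}\overrightarrow{A_n}_{\ge 1}\to A$ by sending $a_i a_{i+1}\cdots a_{j-1}$ to the matrix unit $E_{ij}$ for $i<j$. Since the non-trivial paths form a basis of the left side and $\{E_{ij}\}_{i<j}$ is a basis of $A$, this map is a linear bijection.

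\emph{Step 2 (multiplicativity).} Check that $\varphi$ preserves products on basis elements. Take the path $p$ from $i$ to $j$ and the path $q$ from $k$ to $l$. Then $pq$ is the path from $i$ to $l$ if $j=k$, and $pq=0$ otherwise. Correspondingly, $E_{ij}E_{kl}=\delta_{jk}E_{il}$. Extending bilinearly shows that $\varphi$ is an isomorphism of non-unital algebras. This is also the content of Theorem~\ref{Thm:incidence} with $\pi$ the set of arrows: $A_\pi$ consists of the matrices supported on positions $(i,j)$ with $i<j$. However, since that theorem includes trivial paths in its formulation, it is safer to use the direct isomorphism.

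\emph{Step 3 (conclusion).} Isomorphic algebras satisfy the same polynomial identities, so $\id{A}=\id{\mathbb{K}\overrightarrow{A_n}_{\ge 1}}=\langle x_1\cdots x_n\rangle_{T_+}$ by Proposition~\ref{prop:An T-ideal}.

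The only point needing care is that both algebras are non-unital, so the identities are taken in $\mathbb{K}\langle X\rangle_+$; this is consistent with how the proposition is stated. No step is expected to be a real obstacle.
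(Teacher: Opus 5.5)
Your proof is correct, and it differs from the paper's only in how the equality $\id{A}=\id{\mathbb{K}\overrightarrow{A_n}_{\ge 1}}$ is obtained.

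\textbf{The paper's route.} It gets this equality in one line by citing Theorem~\ref{Thm:incidence} (the Berele et al.\ result), with $\pi$ the set of arrows of $\overrightarrow{A_n}$. Proposition~\ref{prop:An T-ideal} then finishes the proof.

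\textbf{Your route.} You write down the explicit isomorphism $a_i\cdots a_{j-1}\mapsto E_{ij}$, check $E_{ij}E_{kl}=\delta_{jk}E_{il}$ against concatenation of paths, and then use Proposition~\ref{prop:An T-ideal} in the same way.

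\textbf{Comparison.} Your argument is more elementary and self-contained. It also gives something stronger: an actual algebra isomorphism, not just equality of T-ideals. Citing Theorem~\ref{Thm:incidence} is really only needed when $Q$ has oriented cycles, because then $\mathbb{K}Q_\pi$ is infinite-dimensional and cannot be isomorphic to $A_\pi$.

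\textbf{Your concern about trivial paths is unnecessary.} With $\pi$ the set of arrows, $\tilde{\pi}$ is exactly the set of non-trivial paths. So $\mathbb{K}Q_\pi=\mathbb{K}\overrightarrow{A_n}_{\ge 1}$, and $A_\pi$ has zero diagonal, i.e.\ it is precisely the strictly upper triangular matrices. This is exactly how the paper applies the theorem, so the citation is legitimate; your direct isomorphism simply makes it superfluous.
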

\begin{proof}
By Theorem~\ref{Thm:incidence}, $\id{A} = \id{\mathbb{K}\overrightarrow{A_n}_{\ge 1}}$.
\end{proof}

Using Proposition~\ref{prop:An T-ideal}, we obtain the following.

\begin{thm}\label{Thm:acyclic T-ideal}
Let $Q$ be a finite acyclic quiver, and let the length of the longest paths of $Q$ be $n-1$. Then $\id{\mathbb{K}Q_{\ge 1}} = \langle x_1 x_2 \cdots x_n \rangle_{T_+}$.
\end{thm}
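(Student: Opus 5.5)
The plan is to prove the two inclusions separately, using Proposition~\ref{prop:An T-ideal} for the harder one.

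For $\langle x_1 x_2 \cdots x_n \rangle_{T_+} \subseteq \id{\mathbb{K}Q_{\ge 1}}$ it suffices to check that $x_1 \cdots x_n$ is an identity, since $\id{\mathbb{K}Q_{\ge 1}}$ is a T-ideal of $\mathbb{K}\langle X\rangle_+$. Take elements $b_1,\dots,b_n \in \mathbb{K}Q_{\ge 1}$ and expand each in the basis of non-trivial paths. By multilinearity, $b_1\cdots b_n$ is a linear combination of products $p_1 p_2\cdots p_n$ of non-trivial paths. Each such product is either zero or a path of length at least $n$. Since $Q$ is acyclic with longest path of length $n-1$, no path of length $\ge n$ exists, so every product vanishes.

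For the reverse inclusion, choose a path $a_1 a_2 \cdots a_{n-1}$ of maximal length $n-1$ in $Q$. Because $Q$ is acyclic, this path visits $n$ distinct vertices. Its arrows therefore span a subquiver isomorphic to $\overrightarrow{A_n}$, and the arrows and vertices of that subquiver form a genuine copy of $\overrightarrow{A_n}$ inside $Q$.

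The non-trivial paths of $Q$ lying inside this subquiver are exactly the non-trivial paths of $\overrightarrow{A_n}$. Multiplication of paths is concatenation, so their span is a subalgebra of $\mathbb{K}Q_{\ge 1}$ isomorphic to $\mathbb{K}\overrightarrow{A_n}_{\ge 1}$. Since identities of an algebra are inherited by its subalgebras, we obtain
\[
\id{\mathbb{K}Q_{\ge 1}} \subseteq \id{\mathbb{K}\overrightarrow{A_n}_{\ge 1}} = \langle x_1 x_2 \cdots x_n \rangle_{T_+},
\]
where the equality is Proposition~\ref{prop:An T-ideal}. The two inclusions together give the statement.

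The main point needing care is the embedding step. One has to confirm that the longest path really yields an $\overrightarrow{A_n}$, meaning its vertices are distinct (acyclicity) and its arrows are distinct. One must also confirm that the subspace spanned by its subpaths is closed under multiplication. The zero products are the same in both algebras, because concatenation in $Q$ of two subpaths of $a_1\cdots a_{n-1}$ is nonzero exactly when it is nonzero in $\overrightarrow{A_n}$. A degenerate case arises when $n=1$, i.e.\ $Q$ has no arrows. The paper's standing assumption that $Q_1$ is non-empty gives $n\ge 2$, which rules this out.
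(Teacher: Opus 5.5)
Your proof is correct and follows the paper's argument. You embed $\mathbb{K}\overrightarrow{A_n}_{\ge 1}$ via a longest path to get $\id{\mathbb{K}Q_{\ge 1}} \subseteq \langle x_1\cdots x_n\rangle_{T_+}$ by Proposition~\ref{prop:An T-ideal}, and you use the vanishing of $x_1\cdots x_n$ on $\mathbb{K}Q_{\ge 1}$ for the reverse inclusion; your check that the subpaths of the longest path span a subalgebra isomorphic to $\mathbb{K}\overrightarrow{A_n}_{\ge 1}$ (distinct vertices by acyclicity, closure under concatenation) fills in a step the paper asserts without comment.
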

\begin{proof}
Since the maximal path length is $n-1$, the algebra $\mathbb{K}\overrightarrow{A_n}_{\ge 1}$ is isomorphic to a subalgebra of $\mathbb{K}Q_{\ge 1}$. Hence $\id{\mathbb{K}Q_{\ge 1}} \subseteq \id{\mathbb{K}\overrightarrow{A_n}_{\ge 1}} = \langle x_1 x_2 \cdots x_n \rangle_{T_+}$. Moreover, $x_1 x_2 \cdots x_n \in \id{\mathbb{K}Q_{\ge 1}}$ because $Q$ is a finite acyclic quiver with maximal path length $n-1$. Thus $\langle x_1 x_2 \cdots x_n \rangle_{T_+} \subseteq \id{\mathbb{K}Q_{\ge 1}}$.
\end{proof}

Now consider quivers that are not acyclic. It is difficult to describe all polynomial identities of the principal subalgebra, but with the restriction of multilinearity we obtain some results.

\begin{thm}\label{Thm:not acyclic degree}
Let $Q$ be a finite non-acyclic $PI$ quiver containing an $n$-circle, but no $m$-circle for $m \ge n+1$. If $f$ is multilinear and $f \in \id{\mathbb{K}Q_{\ge 1}}$, then $\deg (f) \ge 2n$.
\end{thm}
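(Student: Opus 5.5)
Since $f$ is multilinear, I will use Lemma~\ref{lem:unique nonzero}. For every $m$ with $1\le m\le 2n-1$ I will exhibit $m$ elements of $\mathbb{K}Q_{\ge 1}$ that have exactly one non-zero product among all orderings. By Lemma~\ref{lem:unique nonzero}, no multilinear identity of $\mathbb{K}Q_{\ge 1}$ can then have degree $m$, so every non-zero multilinear $f\in\id{\mathbb{K}Q_{\ge 1}}$ has $\deg(f)\ge 2n$. The natural source of such elements is the $n$-circle $C_n\subseteq Q$, with vertices $1,\dots,n$ and arrows $a_1,\dots,a_n$ where $s(a_i)=i$ and $t(a_i)=i+1$ (indices mod $n$). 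I will only use paths inside $C_n$; the rest of $Q$ plays no role, because products of these paths are computed inside $\mathbb{K}Q$ regardless of other arrows.

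The construction mimics the staircase sequence $e_{11},e_{12},e_{22},e_{23},\dots,e_{nn}$ in the proof of minimality in the Amitsur--Levitzki theorem (Theorem~\ref{Thm:Amitsur-Levitski}). Vertex idempotents are not available in $\mathbb{K}Q_{\ge 1}$, so I replace the ``diagonal'' entries by the full cycles $c_i=a_ia_{i+1}\cdots a_{i-1}$ based at vertex $i$, and the ``off-diagonal'' entries by the arrows $a_i$. Consider the sequence of $2n-1$ elements
\[
c_1,\ a_1,\ c_2,\ a_2,\ \dots,\ a_{n-1},\ c_n .
\]
Their product in this order is the non-zero path $c_1a_1c_2a_2\cdots a_{n-1}c_n$. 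For $m<2n-1$ I will take a contiguous initial segment of this sequence; it has the same structure, and the uniqueness argument below applies to it verbatim.

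The key step, and the main obstacle, is showing that no other ordering gives a non-zero product. The difficulty is that, unlike matrix units, the elements $c_i$ can be followed by any path starting at $i$, and $C_n$ is cyclic. So an arrangement could in principle wind around the circle, for example $a_{n-1}$ followed by $c_n$ followed by something beginning at $n$, and then continue from vertex $1$ again.

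I plan to handle this by a length and winding count. Every product of all the chosen elements in any order, if non-zero, is a path in $C_n$. Its length is fixed:
\[
n\cdot\#\{c_i\}+\#\{a_i\}.
\]
A path in $C_n$ is determined by its start vertex and its length. Each $a_i$ must be entered at vertex $i$, and the arrows $a_1,\dots,a_{n-1}$ are each used once. Tracking vertices along the product, the cycles $c_i$ do not move the current vertex, while each $a_i$ moves it from $i$ to $i+1$. Because $a_n$ is not in the set, the walk can never pass from vertex $n$ back to vertex $1$. Hence the $a_i$ must appear in increasing order, and each $c_j$ must sit at the unique moment the current vertex is $j$. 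This forces the original ordering.

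If the cycle multiplicities make it awkward to pin down where each $c_j$ sits, I would instead replace $c_j$ by distinct powers $c_j^{k_j}$. Alternatively, I would argue directly: when $c_j$ is placed at vertex $j$, the product is zero unless the current vertex equals $j$, and between consecutive arrows the current vertex is constant. This yields uniqueness and completes the proof.
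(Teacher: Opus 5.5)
Your proposal is correct and is essentially the paper's proof: the paper uses exactly the tuple $(c_1,a_1,c_2,a_2,\dots,a_{n-1},c_n)$ (it calls the cycles $t_i$), takes its initial segments, and applies Lemma~\ref{lem:unique nonzero}. Your vertex-tracking argument supplies the uniqueness check that the paper only asserts: cycles fix the current vertex, each $a_i$ moves it from $i$ to $i+1$, and $a_n$ is absent so the walk cannot wrap around. That argument already works as stated, so the fallback with powers $c_j^{k_j}$ is unnecessary.
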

\begin{proof}
Since $Q$ contains an $n$-circle, $\mathbb{K}\overrightarrow{A_n}_{\ge 1}$ is isomorphic to a subalgebra of $\mathbb{K}Q_{\ge 1}$. By Proposition~\ref{prop:An T-ideal}, every monomial of any non-zero $f \in \id{\mathbb{K}Q_{\ge 1}}$ has degree at least $n$.

Now restrict to multilinear $f$. Label the $n$ vertices of the $n$-circle by $\{1,2,\dots,n\}$, let $a_i$ be the arrow from $i$ to $i+1$ for $i=1,\dots,n-1$, and $a_n$ the arrow from $n$ to $1$. Set $t_i = a_i a_{i+1} \cdots a_n a_1 \cdots a_{i-1}$, $i=1,\dots,n$. Consider the tuple $(t_1, a_1, t_2, a_2, \dots, t_{n-1}, a_{n-1}, t_n)$. For any $1 \le \ell \le 2n-1$, the first $\ell$ coordinates have a unique non-zero product and satisfy the conditions of Lemma~\ref{lem:unique nonzero}. Hence the degree of any multilinear polynomial identity in $\id{\mathbb{K}Q_{\ge 1}}$ cannot be $1,2,\dots,2n-1$, and must therefore be at least $2n$.
\end{proof}

Since the ideal of polynomial identities of a path algebra is always contained in that of its principal subalgebra, a natural question is whether the two are equal. When $Q$ is not $PI$, both are zero trivially. For a finite acyclic quiver, the answer is settled by explicit descriptions. For a finite $PI$ but non-acyclic quiver, Theorem~\ref{Thm:incidence} gives a partial answer.

\begin{thm}\label{Thm:two T-ideal}
(\romannumeral1) If $Q$ is a finite but not $PI$ quiver, then $\id{\mathbb{K}Q} = \id{\mathbb{K}Q_{\ge 1}} = 0$.

(\romannumeral2) If $Q$ is a finite acyclic quiver, then $\id{\mathbb{K}Q}$ is a proper subset of $\id{\mathbb{K}Q_{\ge 1}}$.

(\romannumeral3) If $Q$ is a finite $PI$ but non-acyclic quiver, and for every vertex $i$ there exists a non-trivial path $p_i$ with $s(p_i)=t(p_i)=i$, then $\id{\mathbb{K}Q} = \id{\mathbb{K}Q_{\ge 1}}$.
\end{thm}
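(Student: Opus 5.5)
Part (i) is immediate. If $Q$ is not $PI$, then Theorem~\ref{Thm:PI principal} shows that $\mathbb{K}Q_{\ge 1}$ is not $PI$ either. Hence $\id{\mathbb{K}Q_{\ge 1}}=0$, and since $\id{\mathbb{K}Q}\subseteq\id{\mathbb{K}Q_{\ge 1}}$ always holds ($\mathbb{K}Q_{\ge 1}$ being a subalgebra), both ideals are zero.

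For (ii), let $n-1$ be the maximal path length in $Q$. By Theorem~\ref{Thm:AcyclicQuivers}, $\id{\mathbb{K}Q}=\langle [x_1,x_2]\cdots[x_{2n-1},x_{2n}]\rangle_T$, and by Theorem~\ref{Thm:acyclic T-ideal}, $\id{\mathbb{K}Q_{\ge 1}}=\langle x_1\cdots x_n\rangle_{T_+}$. The inclusion is general, so it remains to show it is strict. I claim $x_1x_2\cdots x_n\notin\id{\mathbb{K}Q}$. To see this, take a longest path $a_1\cdots a_{n-1}$ through vertices $v_1,\dots,v_n$ and substitute $x_1=e_{v_1}$, $x_2=a_1,\dots,x_n=a_{n-1}$. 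The product is $a_1\cdots a_{n-1}\neq 0$. (In the degenerate case $n=1$, $Q$ has no arrows, which our standing assumption excludes; in any case $x_1$ is not an identity of $\mathbb{K}Q$, since $e_v\neq 0$.)

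For (iii), I would use Theorem~\ref{Thm:incidence} applied to two choices of $\pi$. First take $\pi$ to be the set of all paths including the trivial ones; then $\mathbb{K}Q_\pi=\mathbb{K}Q$ and $A_\pi$ is the set of matrices supported on pairs $(i,j)$ with some path (possibly trivial) from $i$ to $j$. Next take $\pi'$ to be the set of arrows; then $\tilde\pi'$ is the set of nontrivial paths, $\mathbb{K}Q_{\pi'}=\mathbb{K}Q_{\ge 1}$, and $A_{\pi'}$ is supported on pairs $(i,j)$ joined by a nontrivial path. The two supports can differ only on diagonal entries $(i,i)$, where the trivial path $e_i$ is available for $\pi$ but not necessarily for $\pi'$. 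The hypothesis supplies a nontrivial cycle $p_i$ at every vertex $i$, so the diagonal entries are allowed in $A_{\pi'}$ as well. Hence $A_\pi=A_{\pi'}$, and therefore
\[
\id{\mathbb{K}Q}=\id{A_\pi}=\id{A_{\pi'}}=\id{\mathbb{K}Q_{\ge 1}}.
\]

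The main obstacle is making sure Theorem~\ref{Thm:incidence} legitimately covers both choices of $\pi$, and in particular that its matrix size is $|Q_0|$ with rows and columns indexed by vertices. Beyond that, I would double-check that ``paths from $i$ to $i$'' in $\tilde\pi$ counts the trivial path $e_i$ exactly when $e_i\in\pi$. With these identifications the argument is formal. The PI hypothesis on $Q$ is needed only to invoke Theorem~\ref{Thm:incidence}, and non-acyclicity is automatic from the existence of the cycles $p_i$.
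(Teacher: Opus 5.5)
Your proposal is correct and follows the paper's proof. Part (i) uses Theorem~\ref{Thm:PI principal}, which is what the paper's ``trivial'' tacitly relies on. Part (ii) uses the explicit descriptions of Theorems~\ref{Thm:AcyclicQuivers} and~\ref{Thm:acyclic T-ideal} together with $x_1\cdots x_n\notin\id{\mathbb{K}Q}$, and part (iii) uses Theorem~\ref{Thm:incidence}. You also supply two details the paper leaves implicit: the substitution $e_{v_1},a_1,\dots,a_{n-1}$ witnessing $x_1\cdots x_n\notin\id{\mathbb{K}Q}$, and the check that taking $\pi$ to be all paths and $\pi'$ to be the arrows gives $A_\pi=A_{\pi'}$, because every vertex carries a non-trivial cycle.
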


\begin{proof}
$(\romannumeral1)$ is trivial.

For $(\romannumeral2)$, let the maximal path length be $n-1$. By Theorem~\ref{Thm:AcyclicQuivers} and Theorem~\ref{Thm:acyclic T-ideal},
\[
\id{\mathbb{K}Q} = \langle u_n \rangle_T,\quad
\id{\mathbb{K}Q_{\ge 1}} = \langle x_1 x_2 \cdots x_n \rangle_{T_+},
\]
where $u_n(x_1,\dots,x_{2n}) = [x_1,x_2][x_3,x_4]\cdots[x_{2n-1},x_{2n}]$. Clearly $x_1 x_2 \cdots x_n \notin \id{\mathbb{K}Q}$, so the inclusion is proper.

For $(\romannumeral3)$, by Theorem~\ref{Thm:incidence}, the two ideals are equal.
\end{proof}

For a finite $PI$ but non-acyclic quiver $Q$ where some vertex $i$ has no non-trivial cycle, we can give examples where the two  ideals of polynomial identities differ.

\begin{ex}\label{Ex:not equal 1}
\[
\begin{tikzcd}[
row sep=6pt, column sep=6pt, inner sep=2pt,
every matrix/.append style={yshift=2pt, outer ysep=6pt}]
n+1 & & & & & n+1 \\
& 2 & & & & & 2 \\
1 & & \cdots & & & 1 & & \cdots \\
& n & {n-1} & & & & n & {n-1}
\arrow["b", from=3-1, to=1-1]
\arrow["b", from=1-6, to=3-6]
\arrow["a_2", from=2-2, to=3-3]
\arrow["a_{2}", from=2-7, to=3-8]
\arrow["a_1", from=3-1, to=2-2]
\arrow["a_{n-2}", from=4-3, to=3-3]
\arrow["a_1", from=3-6, to=2-7]
\arrow["a_{n-2}", from=4-8, to=3-8]
\arrow["a_{n}", from=4-2, to=3-1]
\arrow["a_{n-1}", from=4-3, to=4-2]
\arrow["a_{n}", from=4-7, to=3-6]
\arrow["a_{n-1}", from=4-8, to=4-7]
\end{tikzcd}
\]

Consider the quiver $Q$ on the left of the above figure. We show $St_{2n}\, x_{2n+1} \in \id{\mathbb{K}Q_{\ge 1}}$. It suffices to consider the case where $\{x_1,\dots,x_{2n+1}\}$ are all single paths. If any of $\{x_1,\dots,x_{2n}\}$ contains $b$, the product is zero since $b\cdot p = 0$ for every path $p$. If none contains $b$, then they can be regarded as paths in $C_n$, so $St_{2n}(x_1,\dots,x_{2n}) = 0$. Hence $St_{2n}\,x_{2n+1} \in \id{\mathbb{K}Q_{\ge 1}}$. By the Leibniz-type formula, $St_{2n+1} \in \id{\mathbb{K}Q_{\ge 1}}$. However, $St_{2n}\,x_{2n+1} \notin \id{\mathbb{K}Q}$ because $a_1, e_2, a_2, e_3, \dots, a_{n-1}, e_n, a_n, b, e_{n+1}$ have a unique non-zero product $a_1 e_2 a_2 e_3 \cdots a_{n-1} e_n a_n b e_{n+1}$, where $e_i$ is the trivial path at vertex $i$.

Similarly, for the quiver $Q'$ on the right, we have $x_{2n+1} St_{2n} \in \id{\mathbb{K}Q'_{\ge 1}}$ but $x_{2n+1} St_{2n} \notin \id{\mathbb{K}Q'}$.
\end{ex}

\begin{ex}\label{Ex:generalization}
\[
\begin{tikzcd}
& \dots &&&& \\
2 && n && {n+m+1} & \cdots \\
& 1 & {n+1} & \dots & {n+m} & {n+m+t}
\arrow[from=1-2, to=2-3]
\arrow[from=2-1, to=1-2]
\arrow[from=2-3, to=3-2]
\arrow[from=2-5, to=2-6]
\arrow[from=2-6, to=3-6]
\arrow[from=3-2, to=2-1]
\arrow[from=3-2, to=3-3]
\arrow[from=3-3, to=3-4]
\arrow[from=3-4, to=3-5]
\arrow[from=3-5, to=2-5]
\arrow[from=3-6, to=3-5]
\end{tikzcd}
\]

We can give a generalization, denoted $\bar{Q}$, which contains $n_1$-, $n_2$-, $\dots$, $n_k$-circles (not necessarily distinct) and an $\overrightarrow{A_{m_i+1}}$ path from the $n_i$-circle to the $n_{i+1}$-circle, where at least one of these $m_i$ is greater than one. A quiver with only two circles and one $\overrightarrow{A_{m+1}}$ path is shown above. For $\bar{Q}$,
\[
St_{2n_1} M(m_1) \cdots St_{2n_{k-1}} M(m_{k-1}) St_{2n_k} \in \id{\mathbb{K}\bar{Q}_{\ge 1}},
\]
but this polynomial does not belong to $\id{\mathbb{K}\bar{Q}}$, where $M(m_i)$ is a monic multilinear monomial in $m_i$ variables.

For quivers consisting of $N$-circles and $\overrightarrow{A_M}$ paths, similar results hold.
\end{ex}

For finite $PI$ but non-acyclic quivers, there are many other examples where the two ideals of polynomial identities are not equal. Based on these examples, we propose the following conjecture.

\begin{cj}\label{cj:not equal}
If $Q$ is a finite $PI$ but non-acyclic quiver such that there exists a vertex $i \in Q_0$ with no non-trivial path $p$ satisfying $s(p) = t(p) = i$, then $\id{\mathbb{K}Q}$ is a proper subset of $\id{\mathbb{K}Q_{\ge 1}}$.
\end{cj}

\section{$St_2$ for principal subalgebras of path algebras}

In this section, we compute the $St_2$-elements (central elements) of the principal subalgebra of a path algebra for any finite quiver. We always consider finite quivers without isolated points, since isolated points contribute nothing to the principal subalgebra. Unless stated otherwise, quivers in this section are assumed to be algebraically connected.

First, we recall the results of Garc\'{i}a et al.~\cite{MR3712583}.

\begin{thm}\label{Thm:path center}
If $Q$ is a finite quiver, then $Z(\mathbb{K}Q)$ is the direct sum of the centers of the path algebras associated to the connected components of $Q$. If $Q$ is connected, then $Z(\mathbb{K}Q) = \mathbb{K}\!\cdot\!1$, except if $Q$ is an $n$-circle; in this case,
\[
Z(\mathbb{K}Q) = \left\{ \sum_{i=1}^n f(c_i) \,\bigg|\, f(x) \in \mathbb{K}[x] \right\} \cong \mathbb{K}[x].
\]
\end{thm}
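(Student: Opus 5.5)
For this result, recalled from Garc\'{i}a et al.~\cite{MR3712583}, I would argue directly with the path basis. The plan has three steps: reduce to connected quivers, use commutation with the trivial paths $e_v$ to restrict the support, then use commutation with the arrows.

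\emph{Reduction to connected components.} If $Q=Q^1\sqcup\cdots\sqcup Q^r$ with each $Q^j$ connected, then $\mathbb{K}Q\cong\prod_j \mathbb{K}Q^j$ as algebras, since paths in different components multiply to zero. The center of a finite product is the product of the centers, which gives the first sentence of the theorem. From now on assume $Q$ is connected.

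\emph{Restricting the support.} Let $z=\sum_p \lambda_p p$ be central, the sum running over all paths, trivial ones included. For each vertex $v$ we have $e_v z = z e_v$. Here $e_v z$ keeps the paths starting at $v$ and $z e_v$ keeps the paths ending at $v$, so only paths with $s(p)=t(p)$ can occur. Write
\[
z=\sum_v \mu_v e_v + z',
\]
where $z'$ is a combination of oriented cycles. For an arrow $a$, compare the length-one parts of $az$ and $za$: they are $\mu_{t(a)}a$ and $\mu_{s(a)}a$. Hence $\mu_{s(a)}=\mu_{t(a)}$, and since $Q$ is connected all $\mu_v$ equal a single scalar $\mu$. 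So $z'=z-\mu\cdot 1$ is also central.

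\emph{The main step: cycles force $Q$ to be a circle.} Suppose $z'\neq 0$ and let $p$ be a cycle in its support with $s(p)=v$. Take any arrow $a$ with $t(a)=v$. The path $ap$ is a basis element of $az'$. Since $az'=z'a$ and the right side is a combination of paths ending in $a$, we need $ap=qa$ for some cycle $q$ in the support of $z'$. This forces $p$ to end with $a$, and $q$ is the rotation of $p$ that moves $a$ to the front.

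Consequently every arrow entering $v$ is the last arrow of $p$, so exactly one arrow enters $v$. Symmetrically, using $pa=aq$ for arrows $a$ leaving $v$, exactly one arrow leaves $v$. The rotations of $p$ are again in the support of $z'$, so the same holds at every vertex visited by $p$. The vertices of $p$ therefore span a cyclic subquiver in which each vertex has in- and out-degree $1$ in all of $Q$. By connectivity this subquiver is all of $Q$, so $Q$ is an $n$-circle $C_n$ and $p$ is a power $c_i^k$. This is the step needing the most care: one must track coefficients as well as supports. The relation $a_i z'=z'a_i$ shows that the coefficient of $c_i^k$ equals the coefficient of $c_{i+1}^k$, so the coefficients are constant along rotations.

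\emph{Conclusion and converse.} Putting this together, if $Q$ is connected and not a circle then $z'=0$ and $Z(\mathbb{K}Q)=\mathbb{K}\cdot 1$. In the circle case $z=\sum_i f(c_i)$, with the constant term of $f$ recovered from $\mu$, since $\sum_i e_i=1$. Conversely, each such element is central, because $a_i c_{i+1}^k = c_i^k a_i$ and $a_i c_j = 0 = c_j a_i$ when $j\neq i,i+1$.

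The map $f\mapsto\sum_i f(c_i)$ is an algebra isomorphism from $\mathbb{K}[x]$ onto the center, because $c_ic_j=0$ for $i\neq j$.
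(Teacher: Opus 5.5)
Your proof is correct. The paper does not prove this theorem itself. It quotes it from Garc\'{i}a et al.~\cite{MR3712583} and uses it as a black box, for instance for one inclusion in Proposition~\ref{prop:ncircle center}, so there is no in-paper argument to match.

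Your argument is, however, the unital counterpart of what the paper does for $\mathbb{K}Q_{\ge 1}$ in Propositions~\ref{prop:not ncircle center} and~\ref{prop:ncircle center}. There too the key move is that a nonzero product $bU$ must coincide with some $U'b$. This forces $U$ to be a cycle, pins down the unique arrows entering and leaving each of its vertices, and makes $Q$ a circle. The coefficient comparison $a_i x = x a_i$ then gives constancy along rotations, exactly as in your proof.

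What the unital setting gives you is the idempotents $e_v$. Commuting with them immediately restricts the support to cycles plus $\sum_v \mu_v e_v$, and commuting with the arrows then forces all $\mu_v$ to be equal. In $\mathbb{K}Q_{\ge 1}$ these test elements are unavailable. That is why the paper must eliminate the classes $L(Q)$ and $R(Q)$ by hand, and why two-sided-unextendable paths survive in the center there.

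Two small points are worth making explicit:
\begin{itemize}
\item The constant term of $f$ must be read with the convention $c_i^0=e_i$.
\item Injectivity of $f\mapsto\sum_i f(c_i)$ follows from the linear independence of the distinct paths $e_i$ and $c_i^k$; you only justified multiplicativity and surjectivity.
\end{itemize}
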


\begin{defi}
We say a path $p$ is left-extendable if there exists an arrow $a$ such that $ap \neq 0$; $p$ is right-extendable if there exists an arrow $a$ such that $pa \neq 0$; $p$ is two-sided-extendable if it is both left- and right-extendable; $p$ is two-sided-unextendable if it is neither left- nor right-extendable.

Define $L(Q)$ to be the set of paths of $Q$ that are right-extendable but not left-extendable; $R(Q)$ the set of paths that are left-extendable but not right-extendable; $U(Q)$ the set of two-sided-extendable paths; $T(Q)$ the set of two-sided-unextendable paths.
\end{defi}

We compute the centers in two cases.

\begin{prop}\label{prop:not ncircle center}
If a finite quiver $Q$ is not an $n$-circle, then $$Z(\mathbb{K}Q_{\ge 1}) = \operatorname{span} T(Q).$$
\end{prop}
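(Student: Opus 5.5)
The inclusion $\operatorname{span} T(Q) \subseteq Z(\mathbb{K}Q_{\ge 1})$ is immediate, and for the reverse inclusion I would compare coefficients in $za = az$ for single arrows $a$, then use the hypothesis that $Q$ is not an $n$-circle.

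\textbf{The easy inclusion.} Let $p \in T(Q)$ and let $q$ be a non-trivial path. If $pq \neq 0$, the first arrow of $q$ would right-extend $p$. If $qp \neq 0$, the last arrow of $q$ would left-extend $p$. Hence $pq = qp = 0$ for every basis element $q$ of $\mathbb{K}Q_{\ge 1}$, so $p$ is central.

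\textbf{The reverse inclusion: coefficient comparison.} Write $z = \sum_p \lambda_p p \in Z(\mathbb{K}Q_{\ge 1})$, a finite sum over non-trivial paths. It suffices to commute $z$ with single arrows $a$. Fix $p$ with $\lambda_p \neq 0$ and suppose $p \notin T(Q)$; I will derive a contradiction. First assume $p$ is right-extendable, say $pa \neq 0$ for an arrow $a$.
\begin{itemize}
\item Since paths cancel on the right, the coefficient of the path $pa$ in $za$ is exactly $\lambda_p$.
\item In $az$, the path $pa$ appears only if $pa = aq$ for a path $q$ of the same length as $p$. This forces $p = a r$ and $q = r a$.
\end{itemize}
Consequently $p$ must begin with $a$. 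Moreover, since $pa \neq 0$ we get $t(p) = s(a) = s(p)$, so $p$ is an oriented cycle, its rotation $q = ra$ has $\lambda_q = \lambda_p \neq 0$, and $a$ is the \emph{only} arrow right-extending $p$.

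If $p$ is only left-extendable, the symmetric argument with $az = za$ gives the same conclusion: $p$ is a cycle, and its last arrow is the unique arrow left-extending it.

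\textbf{Propagating around the cycle.} Now iterate over all rotations of $p$: each has non-zero coefficient and, being a cycle, is both left- and right-extendable. Applying the above at each vertex $v$ visited by $p$ shows the following.
\begin{itemize}
\item The only arrow leaving $v$ is the arrow of $p$ following $v$. This uses the rotation starting at $v$, whose unique right extension is its own first arrow.
\item The only arrow entering $v$ is the arrow of $p$ preceding $v$.
\end{itemize}
So every vertex on $p$ has in-degree and out-degree $1$ in $Q$. The arrows of $p$ therefore form a cyclic subquiver $C_m$ in which the vertices are distinct and no other arrow touches any of them.

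\textbf{Conclusion, and the main obstacle.} The delicate point is converting this local picture into a global one, since the hypothesis is algebraic rather than graph-theoretic connectivity. Algebraically connected quivers are connected (Proposition~\ref{prop:connectivity}(i)), and $Q$ has no isolated points. Since no arrow outside $C_m$ meets a vertex of $C_m$, the circle is a whole connected component of $Q$, so $Q = C_m$. This contradicts the hypothesis that $Q$ is not an $n$-circle. Hence $\lambda_p = 0$ for every $p \notin T(Q)$, and $z \in \operatorname{span} T(Q)$.
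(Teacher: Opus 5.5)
Your proof is correct and follows essentially the same route as the paper: compare coefficients in $za=az$ for single arrows $a$ to force every non-$T(Q)$ path with non-zero coefficient to be a cycle with a unique extending arrow, then propagate around its rotations to conclude that $Q$ is a circle. Your in/out-degree count, combined with connectivity from Proposition~\ref{prop:connectivity}(i), spells out the step the paper compresses into ``repeating this shows that $U_i$ is an $n$-circle and $Q$ is an $n$-circle.''
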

\begin{proof}
For $p \in T(Q)$ and any $x \in \mathbb{K}Q_{\ge 1}$, we have $xp = px = 0$, so $\operatorname{span} T(Q) \subseteq Z(\mathbb{K}Q_{\ge 1})$.

For the reverse inclusion, let $x = \sum_{i=1}^m \mu_i U_i + \sum_{i=1}^s \ell_i L_i + \sum_{i=1}^t r_i R_i$ with $L_i \in L(Q)$, $R_i \in R(Q)$, $U_i \in U(Q)$. Suppose $x \in Z(\mathbb{K}Q_{\ge 1})$. For an arrow $a$ with $L_1 a \neq 0$, we have $xa = ax$, i.e.,
\[
\sum_{i=1}^s \ell_i L_i a + \sum_{i=1}^m \mu_i U_i a = \sum_{i=1}^t r_i a R_i + \sum_{i=1}^m \mu_i a U_i.
\]
The term $L_1 a$ would have to equal $a R_i$ or $a U_i$ for some $i$, making $L_1 a$ a path whose first and last arrows are both $a$. This contradicts the fact that $L_1$ is not left-extendable, so $\ell_1 = 0$. Similarly $\ell_2 = \dots = \ell_s = r_1 = \dots = r_t = 0$.

For $b U_i$ with $b U_i \neq 0$, we have $b U_i = U_j b$ for some $j$, so $s(U_i) = t(b)$ and $t(U_i) = t(b)$, meaning $U_i$ is a cycle. There is a unique arrow $a_1$ such that $U_i a_1 \neq 0$, since $U_i a_1 = a_1 U_j$ forces the first arrow of $U_i$ to be $a_1$. Similarly, the second arrow $a_2$ of $U_i$ is the unique arrow with $U_i a_1 a_2 \neq 0$. Repeating this shows that $U_i$ is an $n$-circle and $Q$ is an $n$-circle, a contradiction. Hence $\mu_i = 0$ for all $i$, so $x = 0$.
\end{proof}

The second case is when $Q$ is an $n$-circle.

\begin{prop}\label{prop:ncircle center}
If $Q$ is an $n$-circle with vertices $\{1,\dots,n\}$ and arrows $\{a_1,\dots,a_n\}$, let $c_i = a_i a_{i+1} \cdots a_n a_1 \cdots a_{i-1}$. Then
\[
Z(\mathbb{K}Q_{\ge 1}) = \left\{ \sum_{i=1}^n f(c_i) \,\bigg|\, f(x) \in \mathbb{K}[x],\ f(0)=0 \right\},
\]
which is isomorphic to the subalgebra of $\mathbb{K}[x]$ consisting of polynomials with constant zero.
\end{prop}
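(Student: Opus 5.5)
We prove the two inclusions separately, working throughout with the basis of non-trivial paths of $C_n$. Every such path is uniquely determined by its starting arrow $a_i$ and its length $\ell\ge 1$; write $p_{i,\ell}$ for the path of length $\ell$ starting with $a_i$, with indices taken mod $n$. Then $c_i^k = p_{i,kn}$. For an arrow $a_j$ we have $a_j p_{i,\ell}\neq 0$ if and only if $j\equiv i-1$, and $p_{i,\ell}a_j\neq 0$ if and only if $j\equiv i+\ell$.

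\emph{Step 1: $\sum_i f(c_i)$ is central.} This follows from Theorem~\ref{Thm:path center}. The element $z=\sum_{i=1}^n f(c_i)$ with $f(0)=0$ lies in $Z(\mathbb{K}Q)$. It also lies in $\mathbb{K}Q_{\ge 1}$, because $f$ has zero constant term and so no trivial paths occur. Hence $z$ commutes with everything in $\mathbb{K}Q$, and in particular with $\mathbb{K}Q_{\ge 1}$. If one prefers a direct check, it suffices to verify $c_i a_i = a_i c_{i+1}$ (both equal $p_{i,n+1}$) and extend to powers.

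\emph{Step 2: every central element has this form.} Let $x=\sum \lambda_{i,\ell}\,p_{i,\ell}\in Z(\mathbb{K}Q_{\ge 1})$. It is enough to test commutation with each arrow $a_j$, since arrows generate $\mathbb{K}Q_{\ge 1}$. Compute both sides:
\[
a_j x=\sum_{\ell}\lambda_{j+1,\ell}\,p_{j,\ell+1},\qquad x a_j=\sum_{i+\ell\equiv j}\lambda_{i,\ell}\,p_{i,\ell+1}.
\]
Compare coefficients of $p_{i,\ell+1}$. If $i\neq j$, the left side contributes nothing, so $\lambda_{i,\ell}=0$ whenever $i+\ell\equiv j$ and $i\neq j$. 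Letting $j$ vary, this gives $\lambda_{i,\ell}=0$ unless $n\mid \ell$. For $i=j$ and $\ell\equiv 0$, it gives $\lambda_{j+1,\ell}=\lambda_{j,\ell}$. Therefore $\lambda_{i,kn}=\mu_k$ is independent of $i$, and
\[
x=\sum_{k\ge1}\mu_k\sum_i c_i^k=\sum_i f(c_i)\quad\text{with } f=\sum_k\mu_k x^k .
\]

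\emph{Step 3: the isomorphism.} The map $f\mapsto\sum_i f(c_i)$ is an algebra map, because $c_ic_j=0$ for $i\neq j$ and $c_i$ commutes with itself. It is injective, since the paths $c_1^k$ are linearly independent.

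The main care point is the index bookkeeping in Step 2, namely reading off the constraint $n\mid\ell$ correctly from the cyclic relations. The argument is routine once we use the $p_{i,\ell}$ parametrization.
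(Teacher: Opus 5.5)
Your proof is correct and takes essentially the same route as the paper: both get $\supseteq$ from Theorem~\ref{Thm:path center}, and both prove $\subseteq$ by commuting a central element with single arrows. That commutation first kills the coefficients of non-cycle paths (length not divisible by $n$) and then forces the coefficients of $c_i^k$ to agree across $i$. Your $p_{i,\ell}$ parametrization makes this coefficient comparison fully explicit, you correctly use $Z(\mathbb{K}Q)\cap\mathbb{K}Q_{\ge 1}\subseteq Z(\mathbb{K}Q_{\ge 1})$ where the paper glosses this, and you verify the isomorphism with the polynomials of zero constant term directly.
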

\begin{proof}
By Theorem~\ref{Thm:path center}, the set on the right is contained in $Z(\mathbb{K}Q) \subseteq Z(\mathbb{K}Q_{\ge 1})$.

For the reverse inclusion, let $x \in Z(\mathbb{K}Q_{\ge 1})$. Write
$x = \sum_{i=1}^n f_i(c_i) + \sum_{j=1}^m k_j \ell_j$,
where $f_i(0)=0$ and each $\ell_j$ is not a power of any $c_t$. Write $\ell_1 = (a_1\cdots a_n)^m a_1 \cdots a_r$ with $1 \le r \le n-1$. From $a_{r+1} x = x a_{r+1}$, the term $\ell_1 a_{r+1}$ in $x a_{r+1}$ cannot equal any term in $a_{r+1} x$, so $k_1 = 0$. Similarly $k_1 = \dots = k_m = 0$.

Thus $x = \sum_{i=1}^n f_i(c_i)$. Checking $a_i x = x a_i$ for all $i$ yields $f_1 = \dots = f_n$.
\end{proof}

We summarize the above results.

\begin{thm}\label{Thm:principal center}
If $Q$ is a finite quiver, then $Z(\mathbb{K}Q_{\ge 1})$ is the direct sum of the centers of the principal subalgebras associated to the algebraically connected components of $Q$. If $Q$ is algebraically connected, then
$$Z(\mathbb{K}Q_{\ge 1}) = \operatorname{span}\{\text{paths of } Q \text{ which are two-sided-unextendable}\}$$
except if $Q$ is an $n$-circle; in this case,
\[
Z(\mathbb{K}Q_{\ge 1}) = \left\{ \sum_{i=1}^n f(c_i) \,\bigg|\, f(x) \in \mathbb{K}[x],\ f(0)=0 \right\},
\]
which is isomorphic to the subalgebra of $\mathbb{K}[x]$ consisting of polynomials with constant zero.
\end{thm}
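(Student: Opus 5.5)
This theorem collects the results of the section, so I would prove it by assembling three pieces: the decomposition into algebraically connected components, Proposition~\ref{prop:not ncircle center}, and Proposition~\ref{prop:ncircle center}.

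\emph{Step 1: reduction to the algebraically connected case.} Since $Q$ is finite, the definition of the algebraically connected components gives an algebra isomorphism $\mathbb{K}Q_{\ge 1}\cong\bigoplus_{i}\mathbb{K}Q^i_{\ge 1}$. For any direct sum of algebras $A=\bigoplus_i A_i$ (with componentwise multiplication) one has $Z(A)=\bigoplus_i Z(A_i)$. Indeed, write $x=\sum_i x_i$; then $x$ commutes with every $y=\sum_i y_i$ if and only if $x_iy_i=y_ix_i$ for all $i$ and all $y_i\in A_i$. This identity needs no unit, so it holds for the non-unital algebras here. This gives the first sentence of the theorem; it is the observation already recorded at the end of the section on connectivity.

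\emph{Step 2: the algebraically connected case, by cases on $Q$.} If $Q$ is not an $n$-circle, Proposition~\ref{prop:not ncircle center} gives $Z(\mathbb{K}Q_{\ge 1})=\operatorname{span}T(Q)$. By definition, $T(Q)$ is exactly the set of two-sided-unextendable paths. If $Q$ is an $n$-circle, Proposition~\ref{prop:ncircle center} gives the stated description.

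\emph{Step 3: the isomorphism with polynomials of zero constant term.} Consider the map $f\mapsto\sum_i f(c_i)$. It is multiplicative because $c_ic_j=0$ for $i\ne j$ (these cycles start at different vertices). It is injective because the powers $c_i^k$ are distinct basis paths of $\mathbb{K}Q_{\ge 1}$.

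The only point needing care is the interface between components and the circle exception. The exception "$Q$ is an $n$-circle" is applied to an algebraically connected component, and an $n$-circle is strongly connected, hence algebraically connected by Proposition~\ref{prop:connectivity}. The case split in Step 2 is therefore consistent. Beyond this there is no real obstacle: the argument is bookkeeping on top of the two propositions.
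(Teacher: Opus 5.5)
Your proposal is correct and follows the paper's approach: the paper gives no separate proof and presents this theorem as a summary of Proposition~\ref{prop:not ncircle center}, Proposition~\ref{prop:ncircle center}, and the remark at the end of the connectivity section that centers split over the algebraically connected components. Your Step~3 adds something the paper only asserts, namely the check that $f\mapsto\sum_i f(c_i)$ is an injective algebra map, using $c_ic_j=0$ for $i\neq j$ and the linear independence of the $c_i^k$.
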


\section{$St_3$  for principal subalgebras of path algebras}

In this section, we compute the $St_3$-elements of the principal subalgebra of a path algebra for any finite quiver. As before, we consider only finite quivers without isolated points, and quivers in this section are assumed to be algebraically connected unless stated otherwise.

First, we discuss when a single path is an $St_3$-element.

\begin{defi}\label{Def:non-trivial triple}
Let $A$ be a $\mathbb{K}$-algebra and $a,b,c \in A$. The triple $(a,b,c)$ is said to be non-trivial if at least one of the six monomials of $St_3(a,b,c)$ is non-zero. If two elements of the triple are equal but at least one monomial of $St_3$ is non-zero, the triple is still called non-trivial.
\end{defi}

\begin{prop}\label{prop:st3 path general}
Let $Q$ be a finite quiver and $p$ a single path of $Q$. If there do not exist two distinct arrows $a_1,a_2$ such that $(p,a_1,a_2)$ is a non-trivial triple, then $p \in Z_3(\mathbb{K}Q_{\ge 1})$.
\end{prop}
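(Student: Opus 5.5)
The plan is to reduce to single paths by multilinearity and then run a case analysis on which of the six monomials of $St_3(p,q,r)$ can be non-zero. Since $St_3$ is multilinear and every element of $\mathbb{K}Q_{\ge 1}$ is a linear combination of non-trivial paths, it suffices to prove $St_3(p,q,r)=0$ for all non-trivial paths $q,r$. If $q=r$, this holds because $St_3$ is alternating, so assume $q\neq r$. Suppose some monomial of $St_3(p,q,r)$ is non-zero. I will extract from it two arrows $a_1,a_2$ such that $(p,a_1,a_2)$ is non-trivial, and use the hypothesis to force $a_1=a_2$. Repeating this should pin $q$ and $r$ down so rigidly that the surviving monomials cancel in pairs.

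The extraction step runs through the possible positions of $p$.
\begin{itemize}
\item If $p$ is first, say $pqr\neq0$, let $a_1$ and $a_2$ be the first two arrows of the path $qr$, which has length at least $2$. Then $pa_1a_2\neq0$, so the hypothesis gives $a_1=a_2=:a$, and $a$ is a loop at $v=t(p)$.
\item Continuing in this case, for any arrow $b$ leaving $v$ we have $pab\neq0$, so $b=a$. Hence $a$ is the only arrow leaving $v$. Every non-trivial path starting at $v$ is then a power $a^k$, and in particular $q$ and $r$ are powers of $a$.
\item Symmetrically, if $p$ is last ($qrp\neq0$), then $q$ and $r$ are powers of the unique arrow entering $s(p)$, which is a loop.
\item If $p$ is in the middle ($qpr\neq0$), take $a_1$ to be the last arrow of $q$ and $a_2$ the first arrow of $r$. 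Then $a_1pa_2\neq0$, so $a_1=a_2=a$, with $t(a)=s(p)$ and $s(a)=t(p)$.
\end{itemize}

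The aim is to reach the following situation in every case: $q$ and $r$ lie in the commutative subalgebra $\mathbb{K}[a]$, and $p$ either commutes with them or kills them on the relevant side. Then the six monomials pair off as $\pi$ and $\pi\circ(q\ r)$ with equal products and opposite signs. For instance, if $q=a^i$ and $r=a^j$ with $a$ the only arrow leaving $t(p)$, then $pqr=prq$ and these two terms cancel.

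The main obstacle is the mixed situation, where non-zero monomials with $p$ in different positions occur simultaneously, or where the middle case forces $p$ to run around a cycle through $a$ rather than $a$ being a loop. In the mixed situation I must check that the constraints from each non-zero monomial are compatible and still give pairwise cancellation. For the middle case, I would first try applying the hypothesis again to the pairs (second-to-last arrow of $q$, $a$) and ($a$, second arrow of $r$), aiming to show that $q$ and $r$ are powers of the same element of $\mathbb{K}[a]$. Failing that, I would fall back on showing directly that $qpr=rpq$, for example via $Q$ restricted near $p$ being a circle, so that Proposition~\ref{prop:ncircle center}-type commutation applies.
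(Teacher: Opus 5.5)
Your reductions (multilinearity, and $q\neq r$ by alternation) are correct, and so is your analysis of a non-zero monomial with $p$ first or last. Sorting by the position of $p$ is a reasonable alternative to the paper's split by the shape of $q_1,q_2$ (two distinct arrows, powers of two distinct arrows, powers of one arrow). But the proposal stops where the remaining work lies. The mixed situation and the middle case are announced as obstacles with only a tentative plan, so as written this is not yet a proof.

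The mixed situation is harmless with what you already have. If $pqr\neq0$, every non-trivial path starting at $v=t(p)$ is a power of the loop $a$. So either $s(p)=v$ and $p$ itself is a power of $a$ (all three commute), or $s(p)\neq v$ and $qp=rp=0$, which kills $qpr$, $rpq$, $qrp$ and $rqp$. In both cases the terms cancel in pairs under $q\leftrightarrow r$, and the case with $p$ last is symmetric.

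The missing idea is in the middle case $qpr\neq0$, where $a$ is the last arrow of $q$ and the first arrow of $r$. Apply the hypothesis once more to $a$ and the first arrow $c$ of $p$. Since $t(p)=s(a)$ and $t(a)=s(p)=s(c)$, we get $pac\neq0$, so $c=a$. Hence $s(a)=s(p)=t(a)$, and $a$ is a loop at $v=s(p)=t(p)$. Your own first idea also works: since $q\neq r$, one of them has length at least $2$, and the arrow $b$ next to $a$ inside it gives $bap\neq0$ or $pab\neq0$, forcing $b=a$.

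Now every arrow $b$ leaving $v$ equals $a$ (as $pab\neq0$), and every arrow $b$ entering $v$ equals $a$ (as $bap\neq0$). So $p$, $q$, $r$ are all powers of $a$, and $St_3(p,q,r)=0$.

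In particular, the scenario you feared, a non-loop $a$ with $p$ running around a cycle through $a$, is excluded outright by the hypothesis: there $c\neq a$ but $pac\neq0$. This is essentially how the paper disposes of its non-loop case. The fallback to circle-type commutation as in Proposition~\ref{prop:ncircle center} is not needed.
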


\begin{rem}
\[
\begin{tikzcd}
\bullet & \bullet &&& \bullet & \bullet
\arrow["{a_1}", from=1-1, to=1-2]
\arrow["{c_1}", from=1-2, to=1-2, loop, in=55, out=125, distance=10mm]
\arrow["{c_2}", from=1-5, to=1-5, loop, in=55, out=125, distance=10mm]
\arrow["{a_2}", from=1-5, to=1-6]
\end{tikzcd}
\]
Let $Q$ be the quiver on the left (resp.\ right). For $c_1$ (resp.\ $c_2$), there exist two distinct arrows $a_1,c_1$ (resp.\ $a_2,c_2$) such that $(c_1,a_1,c_1)$ (resp.\ $(c_2,a_2,c_2)$) is a non-trivial triple. Nevertheless, such paths are still $St_3$-elements, as will be proved in the next proposition. For $a_1$ (resp.\ $a_2$), although $(c_1,a_1,c_1)$ (resp.\ $(c_2,a_2,c_2)$) is non-trivial, there do not exist two distinct arrows $b_1,b_2$ such that $(a_1,b_1,b_2)$ (resp.\ $(a_2,b_1,b_2)$) is a non-trivial triple.
\end{rem}

\begin{proof}[Proof of Proposition~\ref{prop:st3 path general}]
Let $q_1,q_2$ be two paths of $Q$. If $q_1$ or $q_2$ contains more than one distinct arrow, the hypothesis implies $St_3(p,q_1,q_2)=0$. If $q_1 = b_1^{k_1}$, $q_2 = b_2^{k_2}$ with $b_1 \neq b_2$, then $St_3(p,q_1,q_2)=0$.

If $q_1 = b_1^{k_1}$, $q_2 = b_1^{k_2}$, then $St_3(p,q_1,q_2) = q_2 p q_1 - q_1 p q_2$. If $b_1$ is a loop, the two monomials are non-zero only if $p$ is a cycle at the same vertex. In that case $p = b_1^k$, otherwise we could find $b_2 \neq b_1$ in $p$ with $(p,b_1,b_2)$ non-trivial. Thus $St_3(p,q_1,q_2)=0$.

If $b_1$ is not a loop, the two monomials are non-zero only if $k_1 = k_2 = 1$, $t(p)=s(b_1)$, $s(p)=t(b_1)$. Then we can find an arrow $b_2$ in $p$ different from $b_1$ such that $(p,b_1,b_2)$ is non-trivial. Hence $St_3(p,q_1,q_2)=0$ in all cases.
\end{proof}

\begin{prop}\label{prop:st3 path special}
Let $Q$ be one of the following: (\romannumeral1) a loop $c_1$ at vertex $1$; (\romannumeral2) a loop $c_1$ at $1$ and $k_1$ arrows with terminal vertex $1$ that are not loops; (\romannumeral3) a loop $c_1$ at $1$ and $k_2$ arrows with starting vertex $1$ that are not loops. Then $c_1^k \in Z_3(\mathbb{K}Q_{\ge 1})$ for all $k \ge 1$.
\end{prop}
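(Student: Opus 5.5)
Since $St_3$ is multilinear, it suffices to show $St_3(c_1^k,q_1,q_2)=0$ for all pairs of non-trivial paths $q_1,q_2$ of $Q$. We treat case (\romannumeral2); case (\romannumeral3) is the mirror image under reversing all arrows, which replaces $St_3(x_1,x_2,x_3)$ by $\pm St_3(x_1,x_2,x_3)$ in the opposite algebra. Case (\romannumeral1) is contained in either of the others with $k_1=0$ or $k_2=0$, and is immediate anyway because $\mathbb{K}Q_{\ge 1}$ is then commutative.

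\textbf{Listing the paths.} In case (\romannumeral2) let $b_1,\dots,b_{k_1}$ be the non-loop arrows, with $s(b_j)=v_j\neq 1$ and $t(b_j)=1$. Every vertex $v_j$ is a source with no incoming arrows, so every non-trivial path has one of two forms: $c_1^m$ with $m\ge 1$, or $b_jc_1^m$ with $m\ge 0$. Write $p=c_1^k$.

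\textbf{Case analysis on $(q_1,q_2)$.}
\begin{itemize}
\item If both $q_1$ and $q_2$ are powers of $c_1$, then all three arguments lie in the commutative algebra $\mathbb{K}[c_1]$, so $St_3$ vanishes.
\item If both have the form $b_jc_1^m$, then any product containing two such factors is zero. This is because $b_j$ can only appear in first position and $c_1^m b_{j'}=0$. Hence all six monomials vanish.
\item If exactly one, say $q_1=b_jc_1^m$, is of the $b$-type and $q_2=c_1^l$, then the only non-zero monomials are those with $q_1$ in the first position, namely $q_1pq_2$ and $q_1q_2p$. These carry opposite signs in $St_3(p,q_1,q_2)$, since the permutations differ by a transposition. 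Both equal $b_jc_1^{m+k+l}$, so they cancel.
\item The case with $q_1$ and $q_2$ interchanged is symmetric.
\end{itemize}
Hence $St_3(p,q_1,q_2)=0$ in every case.

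\textbf{Main point.} The main obstacle is making sure that no other path shapes exist, in particular paths passing through $v_j$ in the middle. This is guaranteed by the hypothesis that the extra arrows all end at $1$, so each $v_j$ has no incoming arrows. The same holds dually in case (\romannumeral3), where each target vertex has no outgoing arrows.
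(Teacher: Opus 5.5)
Your proof is correct and follows the same idea as the paper's one-line argument: in any non-trivial triple $(c_1^k,q_1,q_2)$, one of $q_1,q_2$ is a power of $c_1$ and therefore commutes with $c_1^k$. Your explicit list of paths ($c_1^m$ and $b_jc_1^m$) also shows why the two monomials not cancelled by that commutation vanish, namely that $b_jc_1^m$ can only occur in first position, which the paper leaves implicit; your reduction of case (\romannumeral3) to case (\romannumeral2) via the opposite algebra, where $St_3$ only changes sign, is sound.
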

\begin{proof}
If $(c_1^k, q_1, q_2)$ is a non-trivial triple, then one of $q_1,q_2$ must be a power of $c_1$, hence commutes with $c_1^k$. Thus $St_3(c_1^k, q_1, q_2)=0$.
\end{proof}

\begin{defi}
Let $Q$ be a finite quiver and $p$ a single path of $Q$. We say $p$ satisfies property $(*)$ if either there do not exist two distinct arrows $a_1,a_2$ such that $(p,a_1,a_2)$ is a non-trivial triple, or $p = c_1^k$ as in Proposition~\ref{prop:st3 path special}.
\end{defi}

If a single path $p$ satisfies property $(*)$, then $p$ is an $St_3$-element. Conversely:

\begin{prop}\label{prop:not st3 path}
Let $Q$ be a finite quiver and $p$ a single path of $Q$. If $p$ does not satisfy property $(*)$, then $p \notin Z_3(\mathbb{K}Q_{\ge 1})$.
\end{prop}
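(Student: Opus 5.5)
Negating property $(*)$ gives two things: there exist distinct arrows $a_1,a_2$ with $(p,a_1,a_2)$ non-trivial, and $p$ is not a power $c_1^k$ of a loop in one of the three configurations of Proposition~\ref{prop:st3 path special}. The plan is to use this pair directly as the test elements. I will expand
\[
St_3(p,a_1,a_2)=pa_1a_2-pa_2a_1-a_1pa_2+a_1a_2p+a_2pa_1-a_2a_1p
\]
and show it is nonzero. All six monomials are either $0$ or single paths. So the only way the sum can vanish is that the nonzero monomials cancel in pairs of opposite sign that are \emph{equal as paths}.

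The first step is to compare arrow multisets. Each nonzero monomial is a path whose arrows are those of $p$ together with one copy each of $a_1,a_2$. Equal paths must coincide as words, so I will examine each possible cancelling pair of opposite sign:
\[
pa_1a_2=a_1pa_2,\quad pa_1a_2=a_2a_1p,\quad a_1a_2p=pa_2a_1,\quad \text{etc.}
\]
\begin{itemize}
\item If $pa_1a_2=a_2a_1p$, comparing last letters forces the last arrow of $p$ to be $a_2$ and its first arrow to be $a_1$. Iterating the word equation $p\,a_1a_2=a_2a_1\,p$, by the standard fact that $xy=zx$ forces periodicity, shows $p$ is a power-like word in $a_1,a_2$ such as $(a_2a_1)^ja_2\dots$.
\item Relations such as $pa_1a_2=a_1pa_2$ give $pa_1=a_1p$. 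Hence $p$ is a power of $a_1$ and $a_1$ is a loop. Similarly $a_1a_2p=a_2pa_1$ and the others reduce to commutation relations of the form $p\,w=w'\,p$.
\end{itemize}
Each admissible cancellation thus forces strong structure: $a_1$ or $a_2$ is a loop, or $a_1,a_2$ form a $2$-cycle with $p$ a word alternating in them.

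The second step uses this structure to finish, by choosing a better test tuple in the cases where $St_3(p,a_1,a_2)=0$.
\begin{itemize}
\item \emph{2-circle case.} Here $a_1,a_2$ form a $2$-circle and $p$ is a word in them. Since $Q$ is algebraically connected and not of the form in Proposition~\ref{prop:st3 path special}, I will replace one argument by a longer path. For example, take $(p,a_1,a_2a_1)$, or $(p,a_1,q)$ with $q$ a path leaving the circle. Then apply the unique-nonzero-product idea of Lemma~\ref{lem:unique nonzero}: exhibit a triple in which exactly one monomial is nonzero, or two nonzero monomials that are distinct paths.
\item \emph{Loop case.} Here $p=c^k$ for a loop $c$ at a vertex $v$, and $p$ fails $(*)$. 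Then $v$ must carry both an incoming non-loop arrow $b$ and an outgoing non-loop arrow $d$, or a second loop, or other extra structure. With $b$ and $d$, the triple $(c^k,b,d)$ gives $bc^kd$ as the unique nonzero monomial, unless $t(d)=s(b)$. In that subcase I will use $(c^k,b,cd)$ or a similar triple to break the coincidence. With a second loop $c'$, I use $(c^k,c,c')$. Because the free algebra on two loops has no relations, $St_3(c^k,c,c')=c'c^{k+1}-c^{k+1}c'\neq 0$.
\end{itemize}

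The main obstacle is the bookkeeping in step one: listing exhaustively which sign-opposite pairs of the six monomials can coincide, and checking that every degenerate configuration is covered in step two. I would organize this by the positions of $a_1,a_2$ relative to $p$. Then I would use first- and last-letter comparisons to kill most pairings immediately, leaving only the loop and 2-cycle configurations for special treatment.
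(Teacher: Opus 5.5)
Your overall plan matches the paper's proof. You start from the non-trivial triple $(p,a_1,a_2)$, show that $St_3(p,a_1,a_2)=0$ forces $p$ to be a power of a loop or an alternating word on $a_1,a_2$, and then switch to a better triple. The gap is in this last step: two of your explicit triples fail in exactly the cases they are meant to cover.

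\textbf{The second-loop branch.} $St_3(c^k,c,c')$ is not $c'c^{k+1}-c^{k+1}c'$. The terms $pa_1a_2=a_1pa_2$ and $a_2pa_1=a_2a_1p$ cancel, so $St_3(c^k,c,c')=cc'c^k-c^kc'c$. This is $0$ for $k=1$, as it must be, since $St_3$ is alternating. The case $k=1$ really occurs: take $p=a_1$ with $a_1,a_2$ two loops at one vertex. In fact, for a non-trivial triple with $a_1,a_2$ loops at one vertex, the bookkeeping shows $St_3(p,a_1,a_2)=0$ only when $p\in\{a_1,a_2\}$. This also disposes of alternating words in two loops, a case your Step 2 never addresses. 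Use instead, for example, $St_3(c,c',c^2)=cc'c^2-c^2c'c\neq 0$.

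\textbf{The $2$-circle branch.} Take $a_1\colon 1\to 2$ and $a_2\colon 2\to 1$. The cancellation $a_1a_2p=pa_2a_1$ gives $p=(a_1a_2)^ja_1$, running from $1$ to $2$. For this $p$, all six monomials of your $St_3(p,a_1,a_2a_1)$ vanish. For $Q=C_2$ there is also no path leaving the circle. What works is to flank $p$ by the $2$-cycles at its endpoints: $(p,a_1a_2,a_2a_1)$ has $a_1a_2\,p\,a_2a_1$ as its unique non-zero monomial, whatever the rest of $Q$ is. The paper instead computes $St_3(a_1a_2,p,a_2)=2(a_1a_2)^{j+2}+(a_2a_1)^{j+2}$. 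For $p=(a_2a_1)^ja_2$, swap the roles.

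\textbf{The ``other extra structure'' subcase.} You leave this open, and it is where algebraic connectivity is really needed. Suppose that, besides $c$, the vertex $v$ has only incoming non-loop arrows $b_i$. Since $Q$ is not of form (ii) in Proposition~\ref{prop:st3 path special}, its arrows are linked through non-zero products, and the only arrow leaving $v$ is $c$. Hence some arrow $f$ has $t(f)=s(b_i)$. Then $(c^k,b_i,f)$ has $f\,b_i\,c^k$ as its unique non-zero monomial. The outgoing case is dual. Without connectivity this fails: for a loop $c$ at $1$, $b\colon 2\to 1$ and $e\colon 2\to 3$, the path $c$ violates $(*)$ yet is an $St_3$-element.

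\textbf{Slips in Step 1.} $pa_1a_2=a_2a_1p$ forces the \emph{first} arrow of $p$ to be $a_2$, not $a_1$. Also, $a_1a_2p$ and $a_2pa_1$ have the same sign, so they are not a cancelling pair.

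With these repairs, your argument becomes a complete version of the paper's.
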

\begin{proof}
Assume $p$ does not satisfy $(*)$ but is an $St_3$-element; we derive a contradiction.

Since $p$ does not satisfy $(*)$, there exist distinct arrows $a_1,a_2$ such that $(p,a_1,a_2)$ is non-trivial. We consider cases.

If $a_1 a_2 p \neq 0$, then $a_2 \neq a_1$ implies $a_2 a_1 p \neq a_1 a_2 p$. Since $St_3(a_1,a_2,p)=0$, we have either $(\romannumeral1)\ a_1 a_2 p = p a_2 a_1$ or $(\romannumeral2)\ a_1 a_2 p = a_1 p a_2$.

For $(\romannumeral1)$, $a_1$ and $a_2$ are two distinct loops at the same vertex or form a $2$-circle. In the former case, $p$ is a cycle at that vertex, and we can choose $k_1,k_2$ so that $St_3(a_1^{k_1},a_2^{k_2},p)\neq 0$, a contradiction. In the latter case, $s(a_1)=t(a_2)=s(p)$, $t(p)=s(a_2)=t(a_1)$,  a direct computation shows $St_3(a_1 a_2, p, a_2) \neq 0$, a contradiction.

For $(\romannumeral2)$, we have $a_2 p = p a_2$, so the first and last arrows of $p$ are $a_2$, and by induction $p = a_2^i$, so $a_2$ is a loop. If $a_1$ is also a loop, then $St_3(p,a_1,a_2^{i+10}a_1) \neq 0$, a contradiction. If $a_1$ is not a loop, since $p$ is not of the special form in Proposition~\ref{prop:st3 path special}, we can find $a_3$ with $St_3(p,a_1,a_3) \neq 0$.

The cases $a_2 p a_1 \neq 0$ and $p a_1 a_2 \neq 0$ are handled similarly, each leading to a contradiction. Therefore, if $p$ does not satisfy property $(*)$, then $p \notin Z_3(\mathbb{K}Q_{\ge 1})$.
\end{proof}

Now we compute the 3-centers.

\begin{thm}\label{Thm:ncircle 3-center}
(\romannumeral1) If $Q$ is a $1$-circle (a vertex with a loop), then $\mathbb{K}Q_{\ge 1} = Z_3(\mathbb{K}Q_{\ge 1})$.

(\romannumeral2) If $Q$ is an $n$-circle with $n\ge 2$, then $Z_3(\mathbb{K}Q_{\ge 1}) = 0$.

(\romannumeral3) If $Q$ is a vertex with $m \ge 2$ loops, then $Z_3(\mathbb{K}Q_{\ge 1}) = 0$. Moreover, the subalgebra of the free algebra in $m$ generators consisting of non-commutative polynomials with constant zero has no non-zero $St_3$-elements.
\end{thm}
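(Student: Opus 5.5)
Part (i) is immediate: if $Q$ is a single vertex with one loop $c$, then $\mathbb{K}Q_{\ge 1}$ is spanned by the powers $c^k$, $k\ge 1$. It is therefore commutative, isomorphic to the subalgebra of $\mathbb{K}[x]$ of polynomials with zero constant term. In a commutative algebra every monomial $x_{\sigma(1)}x_{\sigma(2)}x_{\sigma(3)}$ equals $x_1x_2x_3$, so $St_3(x_1,x_2,x_3)=\bigl(\sum_{\sigma\in S_3}\operatorname{sgn}\sigma\bigr)x_1x_2x_3=0$. Hence every element is an $St_3$-element.

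For (ii) and (iii) the plan is to use a grading and leading-term argument, so that we only ever test $St_3(x,y,z)$ against specially chosen paths $y,z$. Take a nonzero $x=\sum_j k_j p_j$ with distinct paths $p_j$. The algebra $\mathbb{K}Q_{\ge 1}$ is graded by path length, and $St_3(x,y,z)$ is trilinear. So we may fix a path $p=p_1$ of maximal length, or of maximal length in a chosen refinement of the grading, and choose paths $y,z$ so that some monomial of $St_3(p,y,z)$ is a path $w$ that cannot arise in $St_3(p_j,y,z)$ for $j\ne 1$ and that survives cancellation within $St_3(p,y,z)$. Its coefficient is then $\pm k_1\neq 0$, which is the contradiction.

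For (iii) we work in the free algebra $\mathbb{K}\langle c_1,\dots,c_m\rangle_+$, which is exactly $\mathbb{K}Q_{\ge 1}$ for a vertex with $m$ loops. Since every product of paths is nonzero, we can read off words directly. Take $p$ to be a longest word in the support of $x$, and set $y=c_1^{N}$, $z=c_2^{N}$ with $N$ larger than the length of $p$. The six monomials $p\,y\,z$, $p\,z\,y$, $y\,p\,z$, \dots are words in which we can locate the long blocks $c_1^N$ and $c_2^N$; the word $y\,p\,z=c_1^Np\,c_2^N$ is determined by its block structure. This word coincides with another monomial only if $p$ itself is absorbed into the blocks, for instance if $c_1^Npc_2^N=p\,c_1^Nc_2^N$. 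That forces $p$ to commute with $c_1^N$, i.e.\ $p$ is a power of $c_1$. The same applies with the roles of $c_1$ and $c_2$ exchanged. In that degenerate case we switch the test elements, for example $y=c_2^N$ and $z=c_2^Nc_1$, so that $p=c_1^k$ no longer commutes with either. The main care needed is to show that $c_1^Npc_2^N$ does not occur for other words $p_j$ of the same length. Uniqueness of factorization in the free monoid handles this: the middle segment of length $\ell(p)$ between the two blocks recovers $p$.

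For (ii), with $n\ge2$, every path is determined by its start vertex and length, and products are nonzero only when endpoints match. Take $y=a_i$ and $z=c_i$, the cycle at vertex $i$. For a path $p$ from $u$ to $v$, arrange that exactly one of the six products is nonzero, or that the nonzero ones do not cancel. Concretely, choose the arrow $a$ ending at $u$ and the cycle $c$ at $v$, so that $a\,p\,c\neq0$. The other orders then need $a$ to meet $c$ or $p$ in incompatible ways, and since $n\ge2$ the arrow $a$ is not a loop. The main obstacle is this case analysis when $u=v$, i.e.\ when $p$ is itself a power of a cycle and several orders survive. There I would instead use $y=a$ and $z=a'$ with two consecutive arrows, adapting the $2$-circle computation $St_3(a_1a_2,p,a_2)\neq0$ from Proposition~\ref{prop:not st3 path}. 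I would then group the terms of $x$ by start vertex and length, so that the surviving monomial cannot be cancelled by the contributions of the other $p_j$.
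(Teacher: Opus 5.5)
Parts (i) and (iii) are fine, and so is the non-cycle half of (ii). For (iii) you take a different route from the paper. The paper takes the lexicographically largest word of the top-degree component and tests it with $St_3(x,x_1x_m,x_m)$ or $St_3(x_1^{k+10},x,x_m)$; your long blocks $c_1^N,c_2^N$ with unique factorization work equally well. One detail you should record: in the degenerate case $p=c_1$ (length $1$) with $y=c_2^N$, $z=c_2^Nc_1$, the word $y\,p\,z$ coincides with $z\,y\,p$. Both carry sign $-1$, so the coefficient is $-2k_1\neq 0$. Similarly in (ii), for $p$ from $u$ to $v\neq u$, the coefficient of $a\,p\,c$ in $St_3(x,a,c)$ is $-k_1$. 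When $v=u-1$ it is $-2k_1$, because then $c\,a\,p=a\,p\,c$.

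The gap is the cycle case of (ii), the one you flag as the main obstacle. In $C_n$ a path is determined by its start vertex and length. So the top homogeneous component of $x$ is either cycle-free or equal to $\sum_{i=1}^n k_ic_i^m$, with $c_i$ the cycle at $i$. For such $x$ and any paths $y,z$, every monomial of $St_3(x,y,z)$ is one of at most $n$ paths, indexed by start vertex. Contributions of different $c_j^m$ land on the same path, so grouping by start vertex cannot stop a monomial from being cancelled. Concretely, for $n\geq 3$,
\[
St_3(x,a_i,a_{i+1})=(k_i-k_{i+1}+k_{i+2})\,c_i^m a_ia_{i+1}.
\]
The system $k_i-k_{i+1}+k_{i+2}=0$ (indices mod $n$) has the nonzero solution $(k_1,\dots,k_6)=(1,1,0,-1,-1,0)$ when $n=6$, so consecutive-arrow tests cannot prove the claim. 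The literal analogue $St_3(x,a_ia_{i+1},a_{i+1})$ of the $2$-circle computation from Proposition~\ref{prop:not st3 path} vanishes identically for $n\geq 3$. Even for $n=2$, $St_3(x,a_1,a_2)$ yields only the linear system $2k_1-k_2=0=k_1-2k_2$, not an isolated coefficient.

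The missing idea is to extract linear relations among the $k_i$ and solve them, as the paper does. $St_3(a_n,x,a_1\cdots a_{n-1})$ gives $k_1=2k_n$; by rotation $k_i=2k_{i-1}$, hence $k_1=2^nk_1=0$. Alternatively, choose test elements under which the unwanted contributions cancel among themselves. For example, $St_3(x,c_v,a_v)=k_{v+1}\,c_v^{m+1}a_v$, since the two $k_v$-terms $x\,c_v\,a_v$ and $c_v\,x\,a_v$ cancel. This keeps your leading-term framework, but only after allowing other $p_j$ to contribute with net coefficient zero.
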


\begin{proof}
$(\romannumeral1)$ Obvious.

$(\romannumeral2)$ By Proposition~\ref{prop:not st3 path}, no single path belongs to $Z_3(\mathbb{K}Q_{\ge 1})$. Let $y = \sum k_i p_i \in Z_3(\mathbb{K}Q_{\ge 1})$ with each $p_i$ a non-trivial path, all of the same length. Write $p_1$ as $(a_1\cdots a_n)^{m_1} a_1 \cdots a_r$. If $r \le n-2$, consider $St_3(a_n, y, a_{r+1})$; the term $k_1 a_n p_1 a_{r+1}$ cannot be expressed in any other way, so $k_1 = 0$. If $r = n-1$, consider $St_3(a_{n-1}a_n, y, a_n a_1)$ to obtain $k_1 = 0$.

Thus $y = \sum_{i=1}^n k_i (a_i \cdots a_n a_1 \cdots a_{i-1})^{m_1}$. Considering $St_3(a_n, y, a_1 \cdots a_{n-1})$, the coefficient of $a_n(a_1 \cdots a_n)^{m_1}a_1 \cdots a_{n-1}$ gives $k_1 = 2k_n$, and by symmetry $k_i = 2k_{i-1}$ for all $i$, forcing $k_1 = \dots = k_n = 0$.

$(\romannumeral3)$ By Proposition~\ref{prop:not st3 path}, no single path belongs to $Z_3(\mathbb{K}Q_{\ge 1})$. Let $y = \sum k_i p_i \in Z_3(\mathbb{K}Q_{\ge 1})$ with $k_i \neq 0$ and all $p_i$ of the same length. Label the loops $x_1,\dots,x_m$ with lexicographic order $x_1 > x_2 > \dots > x_m$. Let $p_i$ be the lexicographically largest path among the $p_j$. If $p_i = x_1^t$, consider $St_3(y, x_1 x_m, x_m)$; the term $p_i x_1 x_m^2 = x_1^{t+1} x_m^2$ is lexicographically largest, so $St_3 \neq 0$, a contradiction. If $p_i = x_1^k q_i$ with $q_i$ not starting with $x_1$, consider $St_3(x_1^{k+10}, y, x_m)$; the term $x_1^{k+10} p_i x_m$ is lexicographically largest, again a contradiction. Hence $y=0$.
\end{proof}

\begin{thm}\label{Thm:not ncircle 3-center}
Let $Q$ be a finite quiver that is not an $n$-circle. Let $p_1,\dots,p_l$ be paths of $Q$ that are not $St_3$-elements. Then $x = \sum_{i=1}^l k_i p_i \in Z_3(\mathbb{K}Q_{\ge 1})$ if and only if $k_1 = \dots = k_l = 0$.

Moreover, $$Z_3(\mathbb{K}Q_{\ge 1}) = \operatorname{span}\{\text{paths of } Q \text{ which satisfy property } (*)\}.$$
\end{thm}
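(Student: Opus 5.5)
The ``if'' direction is trivial, so the work is the ``only if'' direction. The ``moreover'' part will then follow by splitting an arbitrary element into a part supported on paths satisfying $(*)$ and a part supported on the remaining paths. The first part lies in $Z_3(\mathbb{K}Q_{\ge 1})$ by Propositions~\ref{prop:st3 path general} and \ref{prop:st3 path special}. Since $Z_3(\mathbb{K}Q_{\ge 1})$ is a linear subspace ($St_3$ is multilinear), the second part then also lies in $Z_3$. The first claim forces the second part to vanish, and Proposition~\ref{prop:not st3 path} identifies exactly which paths fall into each part.

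For the main claim, I would imitate the proofs of Proposition~\ref{prop:not ncircle center} and Theorem~\ref{Thm:ncircle 3-center}. The idea is a leading-term argument: by Proposition~\ref{prop:not st3 path}, each individual $p_i$ comes with a witness pair $(q_1,q_2)$ of short paths (arrows, powers of loops, or short products like $a_1a_2$ as in the proof of that proposition) with $St_3(p_i,q_1,q_2)\neq 0$. Since $St_3$ is multihomogeneous in path length, I first reduce to all $p_i$ of the same length, and then, among the $p_i$, to those with the same start and end vertices, because $St_3(x,q_1,q_2)$ decomposes by these gradings.

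Next I order the $p_i$, for instance lexicographically after fixing a total order on the arrows, as in part (iii) of Theorem~\ref{Thm:ncircle 3-center}. I pick the extremal $p_1$ and its witness pair, possibly replacing a loop $a$ by a high power $a^{N}$ as in that proof to separate terms. Then I argue that a specific nonzero monomial of $St_3(p_1,q_1,q_2)$, say $q_1p_1q_2$, cannot appear among the six monomials of $St_3(p_j,q_1,q_2)$ for $j\ne1$, nor cancel within $St_3(p_1,q_1,q_2)$. Since all products of paths are paths or zero, coefficient comparison then gives $k_1=0$, and induction on $l$ finishes.

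The main obstacle is the cancellation analysis: a monomial like $q_1p_1q_2$ could equal $p_jq_1q_2$ or $q_2q_1p_j$ when $p_j$ is a cyclic shift of $p_1$ on a cycle. This is precisely the phenomenon that makes the $n$-circle special (compare the $k_1=2k_n$ computation). I would handle it by using the hypothesis that $Q$ is not an $n$-circle (and algebraically connected). The cycle carrying $p_1$ then has an extra arrow $b$ entering or leaving it, and I would choose the witness to include $b$. For example, I would take $St_3(p_1, b, q)$ with $bp_1\neq 0$, where $b$ is not on the cycle. Then a monomial beginning with $b$ followed by $p_1$ determines $p_1$ uniquely by cancelling the prefix $b$, and rotated copies $p_j$ cannot produce it because $b$ cannot occur in the interior of a path on the cycle. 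If $p_1$ is not on a cycle, rotations are impossible and the lexicographically extremal term with the witness from Proposition~\ref{prop:not st3 path} already survives. I expect the bookkeeping of these subcases, which parallel the cases (i)/(ii) in the proof of Proposition~\ref{prop:not st3 path}, to be the bulk of the work.
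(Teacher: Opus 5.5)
There is a genuine gap, and it sits exactly at the step you call the main obstacle: your device for the rotation case does not work.

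\textbf{The extra-arrow device fails.} You propose to rule out cancellation between $p_1$ and its rotations by using that $Q$ is not an $n$-circle, through an arrow $b$ off the cycle with $bp_1\neq 0$. That hypothesis is not what makes the statement true. By Theorem~\ref{Thm:ncircle 3-center}(ii), an $n$-circle with $n\ge 2$ also has $Z_3=0$, so rotation-invariant combinations must be killed by a mechanism that needs no extra arrow. You are carrying over the $St_2$ picture, where $\sum_i c_i$ really is central, to $St_3$.

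The device also fails concretely. Take the $3$-circle $a_1\colon 1\to 2$, $a_2\colon 2\to 3$, $a_3\colon 3\to 1$ plus one arrow $b\colon 4\to 2$. Let $x=k_1c_1+k_2c_2+k_3c_3$ with $c_1=a_1a_2a_3$, $c_2=a_2a_3a_1$, $c_3=a_3a_1a_2$.
\begin{itemize}
\item No off-cycle arrow touches the vertices $1,3$ where $c_1,c_3$ live.
\item For $c_2$ you do have $bc_2\neq 0$. But every $q$ with $c_2q\neq 0$ runs along the cycle, so $c_2q=qc_j$ for some rotation $c_j$. Hence in $St_3(x,b,q)$ the monomial $bc_2q$ coincides with $bqc_j$. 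Your argument overlooks such monomials: there $b$ is a prefix of the product, not an interior arrow of $p_j$.
\item So this choice only yields relations $k_2=k_j$, which $c_1+c_2+c_3$ satisfies.
\item Likewise, when the arrows of $p_1$ exhaust $Q$ (e.g.\ two circles through a common vertex), there is no off-cycle arrow at all.
\end{itemize}

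\textbf{The missing idea} is the coefficient computation the paper uses in the second subcase of Case (iii) and the third subcase of Case (iv). Choose witnesses along the cycle so that one monomial containing $p_1$ is also produced twice, with the same sign, by a rotation $p_j$. For $p_1=\alpha_1\cdots\alpha_k$ and $p_j=\alpha_k\alpha_1\cdots\alpha_{k-1}$, in $St_3(\alpha_k,x,\alpha_1\cdots\alpha_{k-1})$ one has
$\alpha_kp_1\alpha_1\cdots\alpha_{k-1}=\alpha_k(\alpha_1\cdots\alpha_{k-1})p_j=p_j\alpha_k(\alpha_1\cdots\alpha_{k-1})$, so $k_1=2k_j$. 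Iterating around the cycle gives $k_1=2^mk_1$, hence $k_1=0$. In the example, $St_3(a_3,x,a_1a_2)$ already gives $k_1=2k_3$ and $k_3=2k_1$. This factor $2$ (the same one as in $k_1=2k_n$ for the $n$-circle) is what separates $St_3$ from $St_2$. Your proposal replaces this computation by the extra-arrow device, which does not produce it.

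\textbf{The endpoint reduction is also invalid.} $St_3(x,q_1,q_2)$ does not split along the endpoints of the $p_i$. In the example, $a_3c_1a_1a_2=c_3a_3a_1a_2$ mixes $c_1$ and $c_3$, which live at different vertices. For $St_2$ the corresponding reduction is plainly false: $\sum_ic_i$ is central in an $n$-circle with $n\ge 2$, while no $c_i$ is.

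Your reduction to equal lengths is fine. So is your derivation of the ``moreover'' part from Propositions~\ref{prop:st3 path general}, \ref{prop:st3 path special} and \ref{prop:not st3 path}.
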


\begin{proof}
If $k_1 = \dots = k_l = 0$, $x = 0$ must be an $St_3$-element. We prove the converse. Assume $x = \sum_{i=1}^l k_i p_i \in Z_3(\mathbb{K}Q_{\ge 1})$ with $k_i \neq 0$ and all $p_i$ of the same length. Since $p_1 \notin Z_3(\mathbb{K}Q_{\ge 1})$, there exist distinct arrows $a_1,a_2$ such that $(p_1,a_1,a_2)$ is non-trivial. We consider cases by the types of $a_1,a_2$.

\textbf{Case $(\romannumeral1)$:} $a_1,a_2$ are distinct loops at the same vertex $v$. 

Since  $(p_1, a_1, a_2)$ is a non-trivial triple, $ t(p_1)=v$  or $ s(p_1) = v$.
If $p_1$ is not a cycle at $v$,  $p_1a_2a_1$ or $a_1a_2p_1$ cannot be written as the product of $a_1,a_2$ and $p_j$ with $j \neq 1$, $St_3(a_1, a_2, x) \neq 0$. Then $p_1$ must be  a cycle at $v$. Moreover,  for every $p_i$, either $t(p_i) = s(p_i) = v$ or $t(p_i) \neq v$, $s(p_i) \neq v$. 
The subalgebra generated by $k_{i_1}p_{i_1} + \cdots + k_{i_r}p_{i_r}$ (cycles at $v$), $a_1$, $a_2$ is isomorphic to the subalgebra of $\mathbb{K} \langle x, y\rangle$ or $\mathbb{K} \langle x, y, z\rangle$ which consists of all non-commutative polynomials with constant  zero; by Theorem~\ref{Thm:ncircle 3-center}, it has no non-zero $St_3$-elements, so  we can choose  $q_1,q_2$ with $St_3(q_1, q_2, x) \neq 0$,  a contradiction.

\textbf{Case $(\romannumeral2)$:} $a_1,a_2$ are loops at different vertices. 

Then $St_3(a_1,a_2,p_1) = a_2 p_1 a_1 - a_1 p_1 a_2$, where one term is zero and the other is not, $St_3(a_1,a_2,p_i) = a_2 p_i a_1 - a_1 p_i a_2$.  Hence $St_3(a_1,a_2,x) \neq 0$, a contradiction.

\textbf{Case $(\romannumeral3)$:} $a_1$ is a loop, $a_2$ is not. 

The first subcase is $s(a_2) \neq t(a_1)$, $t(a_2) \neq t(a_1)$. Then   $St_3(a_1, a_2, p_1) = a_2 p_1 a_1 - a_1 p_1 a_2$, where one term is zero and the other is not, $St_3(a_1,a_2,p_i) = a_2 p_i a_1 - a_1 p_i a_2$. Hence $St_3(a_1,a_2,x) \neq 0$, a contradiction.
        
The second subcase is  $t(a_1) = s(a_2)$. Denote $t(a_1)$ by $1$, $t(a_2)$ by $2$.  Consider $St_3(a_1, a_2, x)$.   If $p_i$ starts at $2$ and ends at $1$, then $a_2 p_i a_1 \neq 0$, and cannot be equal to another term or minus of another term.  If $p_i$ starts at $2$, but the last arrow of $p_i$ is not $a_2$, then $a_1 a_2 p_i \neq 0$, and cannot be equal to another term or minus of another term.  If $p_i$ ends at $1$, but the first arrow of $p_i$ is not $a_1$, then $p_i a_1 a_2 \neq 0$, and cannot be equal to another term or minus of another term.

So, $p_i$ satisfies one of the following: the six terms of $St_3(a_1, a_2, p_i)$ are all  zero; $p_i$ starts at $2$,  the last arrow of $p_i$ is $a_2$, and only the term $a_1 a_2 p_i \neq 0$;  $p_i$ is a cycle at $1$,  the first arrow of $p_i$ is $a_1$,  only the two terms $a_1 p_i a_2 $, $p_i a_1 a_2 $ are non-zero,  and the sum of the coefficients of the two terms is zero.

Furthermore, $p_i$ cannot start at $2$ with last arrow  $a_2$, since such $p_i$ must have arrows different from $a_1, a_2$. Denote the first arrow from the right of $p_i$ which is different from $a_1$ and $a_2$ by $a_3$, which is the $t$-th arrow of $p_i$ from the right. Then $a_1 a_2 p_i \neq a_1 p_j a_2$ since the first arrow of $p_j$ is $a_1$. If $a_1 a_2 p_i = p_{j_1} a_1 a_2$, then the first arrow of $p_{j_1}$ is $a_1$, the $(t-2)$-th arrow from the right of $p_{j_1}$ is $a_3$. Thus $a_1 p_{j_1} a_2 \neq a_1 a_2 p_j$ since they have different second arrows.     If $a_1 p_{j_1} a_2 = p_{j_2} a_1 a_2$, the first arrow of $p_{j_2}$ is $a_1$, the $(t-3)$-th arrow from the right of $p_{j_2}$ is $a_3$. Repeat this procedure until the first arrow of $p_{j_m}$ is $a_1$, the last arrow of $p_{j_m}$ is $a_3$. We consider $a_1 p_{j_m} a_2$. $a_1 p_{j_m} a_2$ cannot be equal to $p_j a_1 a_2$ or $a_1 a_2 p_j$ for any $j$. Therefore, we show that $p_i$ cannot start at $2$ with the last arrow  $a_2$. So, $p_1$ must be a cycle at $1$ with the first arrow $a_1$.

For $p_i$ which is a cycle at $1$ with the first arrow $a_1$, if $p_i$ is not a power of $a_1$, there exists an arrow $a_3$ with $t(a_3)=1$ contained in path $p_i$ which is different from $a_1$. If $p_i$ is a power of $a_1$, since $p_i \notin Z_3(\mathbb{K}Q_{\geq 1})$, by Proposition~\ref{prop:st3 path general} and Proposition~\ref{prop:st3 path special}, there also exists an arrow $a_3$ which is different from  $a_1$ and $a_2$ with  either $t(a_3)=1$ or $s(a_3)=2$. We consider $St_3(a_3, x, a_2)$. If $s(a_3)=2$, the term $p_1a_2a_3 \neq 0$ cannot be equal to another term or minus of another term. If $t(a_3)=1$,  $a_3p_1a_2 \neq 0$ cannot be equal to $a_3a_2p_j$, $p_ja_2a_3$, $a_2a_3p_j$, $a_2p_ja_3$ for any $j$. So, $a_3p_1a_2$ must be equal to $p_ja_3a_2$ for some $j$. Moreover, $s(a_3) \neq 1$, otherwise  this $p_j$ must  be a cycle at $1$ with the first arrow $a_3$. 

Since $a_3p_1a_2 = p_ja_3a_2$, we can write $p_1= a_1a_{11}\cdots a_{1,k-2}a_3$ and  $p_j=a_3a_1a_{11}\cdots a_{1,k-2}$. Since $s(a_3) \neq 1$,  $a_{1,k-2} \neq a_3$. Set $\alpha_i=a_{1,i}$, $i=1, \cdots, k-2$, $\alpha_{k-1}=a_3$, $\alpha_k=a_1$. It is easy to see that $\alpha_{k-1} \neq \alpha_{k}$, $\alpha_{k-1} \neq \alpha_{k-2}$, $p_1=\alpha_k \alpha_1 \cdots \alpha_{k-1}$, and  $p_j=\alpha_{k-1} \alpha_k \cdots \alpha_{k-2}$.

Consider the term $(\alpha_{i-1}) ( \alpha_i \cdots \alpha_k \alpha_1 \cdots \alpha_{i-1} ) ( \alpha_i \cdots \alpha_k \alpha_1 \cdots \alpha_{i-2} ) \neq 0$, we have 
\begin{align*}
& (\alpha_{i-1}) ( \alpha_i \cdots \alpha_k \alpha_1 \cdots \alpha_{i-1} ) ( \alpha_i \cdots \alpha_k \alpha_1 \cdots \alpha_{i-2} ) \\
=& (\alpha_{i-1}) ( \alpha_i \cdots \alpha_k \alpha_1 \cdots \alpha_{i-2} ) ( \alpha_{i-1} \cdots \alpha_k \alpha_1 \cdots \alpha_{i-2} ) \\
=& ( \alpha_{i-1} \cdots \alpha_k \alpha_1 \cdots \alpha_{i-3} ) ( \alpha_{i-2} \cdots \alpha_k \alpha_1 \cdots \alpha_{i-3} ) (\alpha_{i-2}) \\
=& ( \alpha_{i-1} \cdots \alpha_k \alpha_1 \cdots \alpha_{i-2} ) ( \alpha_{i-1} \cdots \alpha_k \alpha_1 \cdots \alpha_{i-3} ) (\alpha_{i-2}) \\
=& ( \alpha_{i-1} \cdots \alpha_k \alpha_1 \cdots \alpha_{i-2} ) (\alpha_{i-1}) ( \alpha_i \cdots \alpha_k \alpha_1 \cdots \alpha_{i-2} ) \\
=& ( \alpha_{i-1} \cdots \alpha_k \alpha_1 \cdots \alpha_{i-3} ) (\alpha_{i-2}) ( \alpha_{i-1} \cdots \alpha_k \alpha_1 \cdots \alpha_{i-2} ),
\end{align*}
where $\alpha_i \cdots \alpha_k \alpha_1 \cdots \alpha_{i-2} \neq \alpha_{i-1} \cdots \alpha_k \alpha_1 \cdots \alpha_{i-3}$, since the $(k-i+1)$-th arrow of the former is $\alpha_k$, but that of the latter is $\alpha_{k-1}$.  
Therefore, in $St_3(\alpha_{k-1}, x, \alpha_k \cdots \alpha_{k-2})$, we have 
\begin{align*}
  & \alpha_{k-1} p_1 \alpha_k \cdots \alpha_{k-2} \\
=& (\alpha_{k-1})(\alpha_k \cdots \alpha_{k-2})(p_j)  \\
=& (p_j )(\alpha_{k-1}) (\alpha_k \cdots \alpha_{k-2}). 
\end{align*}
So, $k_1 = 2k_j$.  Similarly, consider $St_3(\alpha_{k-2}, x,  \alpha_{k-1} \alpha_k \cdots \alpha_{k-3})$,  we have that
$\alpha_{k-2} \alpha_{k-1}  \cdots \alpha_{k-3}$ is one of $p_i$, and is denoted by $p_{j_1}$. Moreover,  $k_{j} = 2k_{j_1}$.  Repeat this procedure, we can get $k_1 = 0$ finally, a contradiction.

The third subcase, where $t(a_2)=s(a_1)$, is similar to the second subcase.

\textbf{Case $(\romannumeral4)$:} $a_1,a_2$ are not loops, $a_1 \neq a_2$.

The first subcase is $a_1 a_2 = 0$, $a_2 a_1 = 0$. Then   $St_3(a_1, a_2, p_1) = a_2 p_1 a_1 - a_1 p_1 a_2$, where one term is zero and the other is not,
$St_3(a_1, a_2, p_i) = a_1 p_i a_2 - a_2 p_i a_1$.  Hence $St_3(a_1,a_2,x) \neq 0$, a contradiction.

The second subcase is $a_1 a_2 \neq 0$, $a_2 a_1 \neq 0$. $a_1$ and $a_2$ form a $2$-circle. Denote $s(a_1)$ by $1$, $s(a_2)$ by $2$. 

Consider $St_3(a_1,  a_2, x)$. If $p_i$ starts at $1$ (resp.$2$), then $t(p_i)$ must be $1$ or $2$, otherwise $a_1 a_2 p_i$ (resp.$a_2 a_1 p_i$) cannot be equal to another term or minus of another term in $St_3(a_1, a_2, x)$. Considering $St_3(a_1 a_2, x, a_2 a_1) = a_1 a_2 x a_2 a_1 - a_2 a_1 x a_1 a_2$   gives that if $p_i$ starts at $1$ (resp.$2$), it cannot end at $2$ (resp.$1$). Thus, $p_i$ must satisfy one of the following:  $s(p_i)=t(p_i)=1$;  $s(p_i)=t(p_i)=2$;  $s(p_i) \notin \{1, 2\}$, $t(p_i) \notin \{1, 2\}$. However, since $(p_1, a_1, a_2)$ is a non-trivial triple,
$p_1$ must be either a cycle at $1$ or a cycle at $2$.

If there exists a cycle $c$ at $1$ in quiver $Q$ which contains arrows different from $a_1$ and $a_2$, then consider $a_1a_2$, $c$, $k_{i_1}p_{i_1} + \cdots + k_{i_r}p_{i_r}$ (cycles at $1$)  if $p_1$ is a cycle at $1$, consider $a_2a_1$,   $a_2ca_1$,   $k_{j_1}p_{j_1} + \cdots + k_{j_m}p_{j_m}$ (cycles at $2$) if $p_1$ is a cycle at $2$. Both triples can generate a subalgebra which is isomorphic to the subalgebra of a free algebra in two or three generators which consists of all non-commutative polynomials with constant zero. Similar to \textbf{Case $(\romannumeral1)$},  we get $St_3(q_1, q_2, x) \neq 0$. If there exists a cycle $d$ at $2$ in quiver $Q$ which contains arrows different from $a_1$ and $a_2$, then $a_1da_2$ is  a cycle  at $1$ in quiver $Q$ which contains arrows different from $a_1$ and $a_2$,  we can derive a contradiction similarly.

If all the circles at $1$ and all the circles at $2$ in quiver $Q$ only contain $a_1$ and $a_2$, $p_1$ can be written as $(a_1a_2)^k$ or $(a_2a_1)^k$. Without loss of generality, we assume $p_1=(a_1a_2)^k$. Consider $St_3(a_1, p_1, a_2)$.
\begin{align*}
St_3(a_1, p_1, a_2)  = & a_1(a_1a_2)^ka_2-a_1a_2(a_1a_2)^k \\
  & + (a_1a_2)^ka_2a_1 -(a_1a_2)^ka_1a_2  \\
  & +  a_2a_1 (a_1a_2)^k -a_2(a_1a_2)^ka_1  \\
   = & -2(a_1a_2)^{k+1}-(a_2a_1)^{k+1}.
\end{align*}
So, $(a_2a_1)^k$ must be one of $p_i$, otherwise $St_3(a_1, x, a_2)$ must be non-zero. Consider $St_3(a_1, (a_2a_1)^k, a_2)$.
\begin{align*}
St_3(a_1, (a_2a_1)^k, a_2)  = & a_1(a_2a_1)^ka_2-a_1a_2(a_2a_1)^k \\
  & + (a_2a_1)^ka_2a_1 -(a_2a_1)^ka_1a_2  \\
  & +  a_2a_1 (a_2a_1)^k -a_2(a_2a_1)^ka_1  \\
   = & 2(a_2a_1)^{k+1}+(a_1a_2)^{k+1}.
\end{align*}
Let $h_1$, $h_2 \in \mathbb{K}$, it is easy to see that $St_3(a_1, h_1(a_1a_2)^k+h_2(a_2a_1)^k, a_2)  =0$ if and only if $h_1=h_2=0$, hence $St_3(a_1, x, a_2)$ must be  non-zero. 

The third subcase is $a_1a_2 \neq 0$, $a_2a_1 = 0$. Denote $s(a_1)$ by $1$, $s(a_2)$ by $2$, $t(a_2)$ by $3$. Consider $St_3(a_1, a_2, x) = a_1a_2x - a_1xa_2 + a_2xa_1 + xa_1a_2$.
If $p_i$ starts at $3$ and ends at $1$, then $a_2p_ia_1$ cannot be equal to another term or minus of another term. If $p_i$ starts at $3$, but the last arrow of $p_i$ is not $a_2$, then $a_1a_2p_i$ cannot be equal to another term or minus of another term. If $p_i$ ends at $1$, but the first arrow of $p_i$ is not $a_1$, then $p_ia_1a_2$ cannot be equal to another term or minus of another term. So, $p_i$ must satisfy one of the following: the six terms of $St_3(a_1, a_2, p_i)$ are all  zero; $p_i$ is a cycle at $3$ with the last arrow $a_2$,  and only the term $a_1a_2p_i \neq 0$; $p_i$ is  a cycle at $1$ with the first arrow $a_1$, and only the term $p_ia_1a_2 \neq 0$; $p_i$ is a cycle at $2$ with either the first arrow $a_2$ or the last arrow $a_1$, and only the term $a_1p_ia_2 \neq 0$.

Without loss of generality, we assume $a_1a_2p_1 \neq 0$, and $a_1a_2p_1 = a_1p_ia_2$, which means that  $p_1$ is a cycle at $3$ with the last arrow $a_2$, $p_i$ is a cycle at $2$ with  the first arrow $a_2$.

Since $a_1a_2p_1 = a_1p_ia_2$, we have $a_2p_1 = p_ia_2$, so we can write $p_1=a_{11}\cdots a_{1,{k-1}}a_2$ and  $p_i=a_2a_{11}\cdots a_{1,{k-1}}$. Then we have $s(a_{11})=3$ and $t(a_{1,{k-1}})=2$, which means $a_{11} \neq a_2$, $a_{1,{k-1}} \neq a_2$.

Set $\alpha _i = a_{1,i}$ for $1 \leq i \leq k-1$, and set $\alpha _k = a_2$. Now we have $p_1=\alpha _1 \cdots \alpha _k$, $p_i= \alpha _k \alpha _1 \cdots \alpha _{k-1}$, $\alpha_1 \neq \alpha _k$, $\alpha _{k-1} \neq \alpha _k$. Consider the term $(\alpha_{i-1}) ( \alpha_i \cdots \alpha_k \alpha_1 \cdots \alpha_{i-1} ) ( \alpha_i \cdots \alpha_k \alpha_1 \cdots \alpha_{i-2} ) \neq 0$, we have 
\begin{align*}
& (\alpha_{i-1}) ( \alpha_i \cdots \alpha_k \alpha_1 \cdots \alpha_{i-1} ) ( \alpha_i \cdots \alpha_k \alpha_1 \cdots \alpha_{i-2} ) \\
=& (\alpha_{i-1}) ( \alpha_i \cdots \alpha_k \alpha_1 \cdots \alpha_{i-2} ) ( \alpha_{i-1} \cdots \alpha_k \alpha_1 \cdots \alpha_{i-2} ) \\
=& ( \alpha_{i-1} \cdots \alpha_k \alpha_1 \cdots \alpha_{i-3} ) ( \alpha_{i-2} \cdots \alpha_k \alpha_1 \cdots \alpha_{i-3} ) (\alpha_{i-2}) \\
=& ( \alpha_{i-1} \cdots \alpha_k \alpha_1 \cdots \alpha_{i-2} ) ( \alpha_{i-1} \cdots \alpha_k \alpha_1 \cdots \alpha_{i-3} ) (\alpha_{i-2}) \\
=& ( \alpha_{i-1} \cdots \alpha_k \alpha_1 \cdots \alpha_{i-2} ) (\alpha_{i-1}) ( \alpha_i \cdots \alpha_k \alpha_1 \cdots \alpha_{i-2} ) \\
=& ( \alpha_{i-1} \cdots \alpha_k \alpha_1 \cdots \alpha_{i-3} ) (\alpha_{i-2}) ( \alpha_{i-1} \cdots \alpha_k \alpha_1 \cdots \alpha_{i-2} ),
\end{align*}
where $\alpha_i \cdots \alpha_k \alpha_1 \cdots \alpha_{i-2} \neq \alpha_{i-1} \cdots \alpha_k \alpha_1 \cdots \alpha_{i-3}$, since the $(k-i+1)$-th arrow of the former is $\alpha_k$, but that of the latter is $\alpha_{k-1}$. 
Therefore, in $St_3(\alpha_k, x, \alpha_1 \cdots \alpha_{k-1})$, we have
\begin{align*}
  & \alpha_k p_1 \alpha_1 \cdots \alpha_{k-1} \\
=& (\alpha_k)(\alpha_1 \cdots \alpha_{k-1})(p_i)  \\
=& (p_i )(\alpha_k) (\alpha_1 \cdots \alpha_{k-1}). 
\end{align*}
So, $k_1 = 2k_i$.  Similarly, consider $St_3(\alpha_{k-1}, x, \alpha_k \alpha_1 \cdots \alpha_{k-2})$, we have that
$\alpha_{k-1} \alpha_k \alpha_1 \cdots \alpha_{k-2}$ is one of $p_j$, and is denoted by $p_{j_1}$. Moreover,  $k_{i} = 2k_{j_1}$.  Repeat this procedure, we can get $k_1 = 0$ finally,  a contradiction.

Therefore, we derive a contradiction in all cases, which completes this proof.

\end{proof}

We summarize the above results.

\begin{thm}\label{Thm:3-center}
If $Q$ is a finite quiver, then $Z_3(\mathbb{K}Q_{\ge 1})$ is the direct sum of the 3-centers of the principal subalgebras associated to the algebraically connected components of $Q$. If $Q$ is algebraically connected, then
$$Z_3(\mathbb{K}Q_{\ge 1}) = \operatorname{span}\{\text{paths of } Q \text{ which satisfy property } (*)\}$$
except if $Q$ is a $1$-circle $C_1$; in this case, $Z_3(\mathbb{K}Q_{\ge 1}) = \mathbb{K}Q_{\ge 1}$, which is the subalgebra of $\mathbb{K}[x]$ consisting of polynomials with constant zero.
\end{thm}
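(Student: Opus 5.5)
This theorem is essentially a summary, so the plan is to assemble it from the results already established, in three steps.

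\emph{Step 1: reduction to components.} If $Q$ is not algebraically connected, write $\mathbb{K}Q_{\ge 1}\cong\bigoplus_i \mathbb{K}Q^i_{\ge 1}$ over the algebraically connected components $Q^i$. Since $St_3$ is multilinear, for $x=\sum_i x_i$, $y=\sum_i y_i$, $z=\sum_i z_i$ with all products between different summands zero, we get
\[
St_3(x,y,z)=\sum_i St_3(x_i,y_i,z_i).
\]
Hence $x$ is an $St_3$-element if and only if each $x_i$ is an $St_3$-element of $\mathbb{K}Q^i_{\ge 1}$. For the ``if'' direction we test $x_i$ against arbitrary $y_i,z_i$ placed in the $i$-th summand, with the other summands taken to be $0$. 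This gives the direct sum decomposition, as already remarked after Proposition~\ref{prop:connectivity}.

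\emph{Step 2: the connected case.} Assume $Q$ is algebraically connected, and split into cases.
\begin{itemize}
\item If $Q$ is not an $n$-circle, Theorem~\ref{Thm:not ncircle 3-center} gives exactly $Z_3(\mathbb{K}Q_{\ge 1})=\operatorname{span}\{\text{paths satisfying }(*)\}$.
\item If $Q$ is an $n$-circle with $n\ge 2$, Theorem~\ref{Thm:ncircle 3-center}(ii) gives $Z_3=0$. This must be reconciled with the formula: no path of $C_n$ satisfies $(*)$, by Proposition~\ref{prop:not st3 path} combined with Theorem~\ref{Thm:ncircle 3-center}(ii), since any path satisfying $(*)$ would lie in $Z_3=0$. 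Hence the span is also $0$ and the formula still holds.
\item If $Q=C_1$, Theorem~\ref{Thm:ncircle 3-center}(i) gives $Z_3=\mathbb{K}Q_{\ge 1}\cong x\mathbb{K}[x]$, via $c_1\mapsto x$. Here the formula also holds, since every $c_1^k$ satisfies $(*)$ by Proposition~\ref{prop:st3 path special}(i). The case is singled out only to record this identification.
\end{itemize}

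\emph{Step 3: compatibility.} Check that the inclusion of the span of the $(*)$-paths in $Z_3$ holds in every case. This follows from Propositions~\ref{prop:st3 path general} and~\ref{prop:st3 path special} together with the fact that $Z_3$ is a linear subspace, which holds because $St_3$ is linear in its first argument.

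The only real subtlety, and the main obstacle, is ensuring Theorem~\ref{Thm:not ncircle 3-center} handles mixed sums. Its proof treats sums of non-$(*)$ paths of equal length, so the plan is to split a general $x\in Z_3$ as $x=x_{(*)}+x'$, where $x_{(*)}$ is the part supported on paths satisfying $(*)$. Then $x'\in Z_3$ by subtraction. Next decompose $x'$ by path length, using that $St_3(x',q_1,q_2)$ for single paths $q_1,q_2$ is homogeneous in length componentwise. Each homogeneous component is then in $Z_3$, and the theorem forces it to vanish.
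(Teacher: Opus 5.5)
Your proposal is correct and follows the paper, whose proof of this theorem is only the summary of Theorem~\ref{Thm:ncircle 3-center}, Theorem~\ref{Thm:not ncircle 3-center} and the direct-sum remark after Proposition~\ref{prop:connectivity}. Your splitting $x=x_{(*)}+x'$ followed by the decomposition by path length makes explicit the reduction to equal-length sums that the paper assumes tacitly, which is a useful addition. One small slip: in Step~1, testing against $y,z$ concentrated in a single summand proves the ``only if'' direction, not the ``if'' direction.
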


\section{Application to combinatorics on words of formal languages}

The results about centers and 3-centers can be interpreted from the viewpoint of combinatorics on words. Standard references are \cite{MR4412543} and \cite{MR1475463}.

\begin{defi}
Let $\mathbb{A}$ be a finite set, called an alphabet. Elements of $\mathbb{A}$ are called letters. A word over $\mathbb{A}$ is a finite sequence $a_1 a_2 \cdots a_n$ of letters from $\mathbb{A}$. The length of a word $\mu$ is denoted $|\mu|$. We do not consider the empty word; every word has length $\ge 1$. A subword of $\mu = a_1\cdots a_k$ is a word of the form $a_i a_{i+1} \cdots a_j$, denoted $\mu[i,j]$ for $1 \le i \le j \le k$. We say $\alpha$ contains $\beta$ if $\beta$ is a subword of $\alpha$.

Let $\mathbb{A}^+$ be the set of all non-empty finite-length words over $\mathbb{A}$, equipped with concatenation. Then $\mathbb{A}^+$ is a free semigroup. A formal language $L$ is a subset of $\mathbb{A}^+$. Denote by $B_n(L)$ the set of words of length $n$ in $L$.

Let $F$ be a finite set of forbidden words over $\mathbb{A}$. Define $X_F$ to be the set of words that do not contain any word in $F$. Then $X_F$ is a formal language, called the formal language with respect to $F$.
Let  $\mathbb{K} X_F$  be  the  $\mathbb{K}$-algebra
with  basis  $X_F$, where the product of  two words $a,b$ in $X_F$ is   $ab$ if $ab \in X_F$ and  zero otherwise, extended linearly.

\end{defi}

Lind and Marcus~\cite{MR1475463} gave a definition of irreducible shift spaces. The notion of irreducibility also applies to formal languages with respect to forbidden words. We only need a weaker property called algebraic irreducibility.

\begin{defi}\label{defi:alg irre}
The formal language $X_F$ with respect to a forbidden set $F$ is algebraically irreducible if there do not exist at least two pairwise disjoint $X_{F_i}$ and a bijection $f: X_F \to \bigcup_i X_{F_i}$ that preserves the binary operation where defined.

If $X_F$ is not algebraically irreducible, we can write $X_F$ as a union of at least two   pairwise disjoint $X_{F_i}$, where each $X_{F_i}$ is algebraically irreducible. These $X_{F_i}$ are called the algebraically irreducible components of $X_F$.
\end{defi}

\begin{rem}
In Definition~\ref{defi:alg irre}, $X_F$ and $X_{F_i}$ can be over different alphabets. Algebraic irreducibility is an analogue of algebraic connectivity.
\end{rem}

\begin{defi}
Let $\mathbb{A}$ be an alphabet, $F$ a set of forbidden words, and $k$ the maximal length of words in $F$. 
If $k=1$, $X_F$ is  the free non-unital semigroup generated by the letters that do not belong to $F$.  If $k \geq 2$,  regard $B_k(X_F)$ as a new alphabet. 
Let
\[
T = \{ xy \mid x,y \in B_k(X_F),\ x[2,k] \neq y[1,k-1] \}.
\]
Then $T \subseteq B_2(B_k(X_F)^+)$. The $k$-th block formal language $X_F^{[k]}$ with respect to $F$ is the formal language with respect to $T$ over the alphabet $B_k(X_F)$.
\end{defi}

\begin{defi}
For a finite quiver $Q$, let $Q_1$ be the alphabet and $F = \{ab \mid a,b \in Q_1,\ t(a) \neq s(b)\}$. The formal language $X_F$ over $Q_1$ is called the formal language associated to $Q$, denoted $Q_L$.
\end{defi}

For a finite quiver $Q$, $Q_L$ is exactly the set of all finite paths of $Q$. Moreover, $Q$ is algebraically connected iff $Q_L$ is algebraically irreducible.

\begin{prop}\label{prop:quiver language}
Let $\mathbb{A}$ be an alphabet, $F$ a set of forbidden words, and $k$ the maximal length of words in $F$. 

$(\romannumeral1 ) $  If $k=1$, then there exists a finite quiver $Q$  such that $X_F = Q_L$.

$(\romannumeral2 ) $  If $k  \geq 2$, then there exists a finite quiver $Q$  such that $X_F^{[k]} = Q_L$. 

\end{prop}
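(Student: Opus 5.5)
Both parts are proved by writing down an explicit quiver and comparing it with the definition of $Q_L$. Recall that $Q_L$ is the formal language over the alphabet $Q_1$ with forbidden set $\{ab \mid t(a)\neq s(b)\}$. Its words are exactly the non-trivial paths of $Q$, and two words are allowed to be concatenated exactly when the endpoints match. So in each case I will build $Q$ so that its arrow set is the relevant alphabet and ``$t(a)=s(b)$'' holds exactly when ``$ab$ is allowed''. Equality of languages then reduces to checking that the length-two forbidden words agree, because in both languages a word is admissible if and only if each consecutive pair of letters is admissible.

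For (i), $k=1$. Here $F$ consists of single letters, and $X_F$ is the free semigroup on the letters of $\mathbb{A}$ that are not in $F$; let $m$ be the number of these letters. I take $Q$ to be one vertex carrying $m$ loops, labelled by those letters. Then $t(a)=s(b)$ for every pair of arrows, so the forbidden set of $Q_L$ is empty, and $Q_L$ is the free semigroup on the $m$ loops, which is $X_F$. The case $m=0$ is degenerate, since $X_F$ is then empty, and I will mention it only in passing. This is consistent with the paper's standing convention that quivers have non-empty vertex and arrow sets, which only makes sense here when $m\geq 1$.

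For (ii), $k\ge 2$. This is the usual higher-block presentation, or de Bruijn-type graph. I take the vertex set $Q_0=B_{k-1}(X_F)$, or more economically the set of words $x[1,k-1]$ and $x[2,k]$ with $x\in B_k(X_F)$. I take the arrow set $Q_1=B_k(X_F)$, and each arrow $x$ goes from $s(x)=x[1,k-1]$ to $t(x)=x[2,k]$. Then for $x,y\in B_k(X_F)$, the condition $t(x)=s(y)$ is literally $x[2,k]=y[1,k-1]$. So the forbidden set $\{xy \mid t(x)\neq s(y)\}$ of $Q_L$ coincides with the set $T$ used to define $X_F^{[k]}$, over the same alphabet $B_k(X_F)$. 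Hence $X_F^{[k]}=Q_L$ as sets of words. Concatenation is governed by the same rule in both, so the identification is compatible with the multiplication in $\mathbb{K}X_F^{[k]}$ and $\mathbb{K}Q_{\ge1}$.

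The only real obstacle is interpretive: what ``$=$'' means. I read it as equality of formal languages over the same alphabet once arrows are named by letters (respectively by $k$-blocks). The point to check is that $Q$ is finite and has no spurious constraints. Finiteness holds because $B_k(X_F)$ is a finite set. There are no spurious constraints because vertices are determined by arrows and no restriction other than endpoint matching is ever imposed. I will not claim that $X_F^{[k]}$ has the same words as $X_F$, since this statement does not need it.
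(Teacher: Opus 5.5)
Your proposal is correct and follows the paper's construction: in (i) a single vertex with the letters of $B_1(X_F)$ as loops, and in (ii) the quiver with $Q_0=B_{k-1}(X_F)$, $Q_1=B_k(X_F)$, $s(x)=x[1,k-1]$ and $t(x)=x[2,k]$, whose forbidden set for $Q_L$ is exactly $T$. Your remarks on the degenerate case $m=0$ and on the optional smaller vertex set go beyond what the paper addresses, and neither affects the argument.
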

\begin{proof}

$(\romannumeral1 )$  Construct $Q$ as follows: $Q_0=\{ 1 \}$, $Q_1 = B_1(X_F)$. For all $x \in Q_1$, set $s(x)=t(x)=1 \in Q_0$. Then $Q_L = X_F$.

$(\romannumeral2 )$  Construct $Q$ as follows: $Q_0 = B_{k-1}(X_F)$, $Q_1 = B_k(X_F)$. For $x \in Q_1$, set $s(x) = x[1,k-1] \in Q_0$, $t(x) = x[2,k] \in Q_0$. Then $Q_L = X_F^{[k]}$.
\end{proof}

\begin{rem}
This construction in $(\romannumeral2 )$ is analogous to the Ufnarovskii graph.
\end{rem}

Proposition~\ref{prop:quiver language} bridges words of $X_F^{[k]}$ and the principal subalgebra of $\mathbb{K}Q$. Hence we can translate our results.

\begin{defi}
Let $X_F$ be a formal language with respect to $F$ over $\mathbb{A}$. A word $\omega \in X_F$ is two-sided-unextendable if for each $a \in \mathbb{A}$, neither $a\omega$ nor $\omega a$  belongs to $X_F$.

Let $k$ be the maximal length of words in $F$ and assume $k \geq 2$. 
A word $\omega_1$ of $X_F^{[k]}$ over $B_k(X_F)$ satisfies property $(\Delta)$ if either there do not exist two distinct letters $\omega_2,\omega_3$ of $X_F^{[k]}$ such that $|\{\sigma \in S_3 \mid \omega_{\sigma(1)}\omega_{\sigma(2)}\omega_{\sigma(3)} \neq 0\}| \ge 1$, or $\omega_1 = A^l$ while $B_k(X_F)$ is one of the following: (\romannumeral1) a single word $A$ of $k$ identical letters; (\romannumeral2) $\{A,\ A[1,k-1]b_1,\dots,A[1,k-1]b_s\}$ where $A$ is $k$ identical letters $a$ and $a \neq b_i$; (\romannumeral3) $\{A,\ d_1 A[2,k],\dots,d_t A[2,k]\}$ where $A$ is $k$ identical letters $a$ and $a \neq d_i$.
\end{defi}

\begin{rem}
In the above definition, $\omega_1$  may be equal to $\omega_2$, or be equal to $\omega_3$, similarly to Definition~\ref{Def:non-trivial triple}.
\end{rem}

\begin{thm}\label{Thm:words center}
Let $\mathbb{A}$ be an alphabet, $F$ a set of forbidden words, $k$ the maximal length of words in $F$. 

$(\romannumeral1 ) $  If $k=1$, let $m$ be the number of the  letters in $\mathbb{A}$ that do not belong to $F$. If $m=1$, denote the unique letter by $t$,  
\[
Z(\mathbb{K} X_F)=Z_3(\mathbb{K} X_F) = \left\{  f(t) \,\bigg|\, f(x) \in \mathbb{K}[x],\ f(0)=0 \right\};
\] 
if  $m \geq 2$, $Z(\mathbb{K} X_F)=Z_3(\mathbb{K} X_F) = 0 $.

(\romannumeral2)    If $k \geq 2$, let $X_F^{[k]}$ be the $k$-th block formal language.  $Z(\mathbb{K} X_F^{[k]})$ is the direct sum of the centers of the algebras associated to the algebraically irreducible components of $X_F^{[k]}$. If $X_F^{[k]}$ is algebraically irreducible, then
$$Z(\mathbb{K} X_F^{[k]}) = \operatorname{span}\{\text{words of } X_F^{[k]} \text{ that are two-sided-unextendable}\}$$
except if $B_k(X_F)$  contains only $n$ distinct words $\omega_1,\dots,\omega_n$ with $\omega_i[1,k-1]$ also $n$ distinct words, $\omega_i[2,k] = \omega_{i+1}[1,k-1]$ for $i=1,\dots,n-1$, and $\omega_n[2,k] = \omega_1[1,k-1]$; in this case,
\[
Z(\mathbb{K} X_F^{[k]}) = \left\{ \sum_{i=1}^n f(W_i) \,\bigg|\, f(x) \in \mathbb{K}[x],\ f(0)=0 \right\},
\]
where $W_i = \omega_i \cdots \omega_n \omega_1 \cdots \omega_{i-1}$, which is isomorphic to the subalgebra of $\mathbb{K}[x]$ of polynomials with constant zero.

(\romannumeral3) If $k \geq 2$, let $X_F^{[k]}$ be the $k$-th block formal language.  $Z_3(\mathbb{K} X_F^{[k]})$ is the direct sum of the 3-centers of the algebras associated to the algebraically irreducible components of $X_F^{[k]}$. If $X_F^{[k]}$ is algebraically irreducible, then
$$Z_3(\mathbb{K} X_F^{[k]}) = \operatorname{span}\{\text{words of } X_F^{[k]} \text{ that satisfy property } (\Delta)\}$$
except if $B_k(X_F)$ contains only one word consisting of $k$ identical letters; in this case, $Z_3(\mathbb{K} X_F^{[k]}) = \mathbb{K} X_F^{[k]}$, the subalgebra of $\mathbb{K}[x]$ of polynomials with constant zero.
\end{thm}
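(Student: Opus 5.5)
The plan is to transport everything through Proposition~\ref{prop:quiver language}. It identifies $\mathbb{K}X_F$ (when $k=1$) or $\mathbb{K}X_F^{[k]}$ (when $k\ge 2$) with the principal subalgebra $\mathbb{K}Q_{\ge 1}$ of an explicit finite quiver $Q$. Once that is done, the statement follows from Theorem~\ref{Thm:principal center} and Theorem~\ref{Thm:3-center} by translating the quiver-theoretic conditions into conditions on words.

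\textbf{First step: the algebra isomorphism.} Since $Q_L$ is exactly the set of non-trivial paths of $Q$, the map sending a word to the corresponding path is a linear bijection $\mathbb{K}Q_L\to\mathbb{K}Q_{\ge 1}$. It is multiplicative: a concatenation of two words lies in the language if and only if the corresponding paths compose. Under this bijection:
\begin{itemize}
\item algebraic irreducibility of the language corresponds to algebraic connectivity of $Q$;
\item two-sided-unextendable words correspond to two-sided-unextendable paths;
\item property $(\Delta)$ corresponds to property $(*)$.
\end{itemize}

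\textbf{Part (i), $k=1$.} Here $Q$ is one vertex carrying $m$ loops.
\begin{itemize}
\item If $m=1$, then $Q=C_1$. Proposition~\ref{prop:ncircle center} with $n=1$ gives the center $\{f(t): f(0)=0\}$, and Theorem~\ref{Thm:ncircle 3-center}(i) gives that the $3$-center is all of $\mathbb{K}Q_{\ge1}$, which is the same set.
\item If $m\ge 2$, every path is two-sided-extendable, so Proposition~\ref{prop:not ncircle center} gives center $0$. Theorem~\ref{Thm:ncircle 3-center}(iii) gives $3$-center $0$.
\end{itemize}
(If $m=0$ the algebra is zero and the statement is vacuous.)

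\textbf{Part (ii), $k\ge2$.} Take $Q_0=B_{k-1}(X_F)$ and $Q_1=B_k(X_F)$, with $s(\omega)=\omega[1,k-1]$ and $t(\omega)=\omega[2,k]$. The direct-sum decomposition over algebraically irreducible components is the image of the decomposition over algebraically connected components. The main task is to check that $Q$, with isolated vertices discarded, is an $n$-circle exactly in the exceptional case of the statement. An $n$-circle has $n$ arrows $\omega_i$ with pairwise distinct sources $\omega_i[1,k-1]$, and $t(\omega_i)=s(\omega_{i+1})$ cyclically. That is precisely the listed condition, and the cycles $c_i$ become the words $W_i$. Theorem~\ref{Thm:principal center} then yields (ii).

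One point needs care: vertices of $B_{k-1}(X_F)$ that are not endpoints of any arrow are isolated points. They do not affect $\mathbb{K}Q_{\ge1}$, so we discard them, as the paper does throughout.

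\textbf{Part (iii), $k\ge2$.} Apply Theorem~\ref{Thm:3-center}.
\begin{itemize}
\item The exceptional case $Q=C_1$ means there is one vertex and one loop, i.e.\ $B_k(X_F)$ consists of a single word $A$ with $A[1,k-1]=A[2,k]$. This forces $A$ to be $k$ identical letters.
\item The special configurations of Proposition~\ref{prop:st3 path special} translate as follows. A loop at a vertex $v$ is a word $A$ with $A[1,k-1]=A[2,k]$, so again $A=a^k$ and $v=a^{k-1}$. A non-loop arrow starting at $v$ has the form $a^{k-1}b$ with $b\neq a$. A non-loop arrow ending at $v$ has the form $d\,a^{k-1}$ with $d\neq a$. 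These give configurations (ii) and (iii) in the definition of $(\Delta)$.
\item The non-trivial-triple condition is the same on both sides, because the product of letters is nonzero exactly when the paths compose.
\end{itemize}

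\textbf{Main obstacle.} The translation itself is straightforward; I expect the bookkeeping in the exceptional cases to be the delicate part. Specifically, in the $n$-circle case one must show that distinctness of the words $\omega_i[1,k-1]$ is exactly what rules out extra vertices or arrows. One must also make sure the "only" in configurations (ii) and (iii) matches the requirement in Proposition~\ref{prop:st3 path special} that $Q$ consist of nothing else. Algebraic connectivity is already assumed, which excludes extra components.
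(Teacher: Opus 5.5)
Your proposal is correct and is essentially the paper's argument. The paper's proof identifies $\mathbb{K}X_F$ (for $k=1$) and $\mathbb{K}X_F^{[k]}$ (for $k\ge 2$) with $\mathbb{K}Q_{\ge 1}$ via Proposition~\ref{prop:quiver language} and reads the result off Theorem~\ref{Thm:principal center}, Theorem~\ref{Thm:ncircle 3-center} and Theorem~\ref{Thm:3-center}; your explicit dictionary for the exceptional cases is the bookkeeping the paper leaves implicit, and it is accurate (loops are exactly the blocks $a^k$, non-loop arrows at $a^{k-1}$ are $a^{k-1}b$ or $da^{k-1}$, the $n$-circle is exactly the listed cyclic condition on $B_k(X_F)$, and isolated vertices of $B_{k-1}(X_F)$ are discarded).
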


\begin{proof}

If $k =1$, by Proposition~\ref{prop:quiver language}  $(\romannumeral1)$, there exists a quiver $Q$ with $Q_L = X_F$, and $\mathbb{K} X_F \cong \mathbb{K}Q_{\ge 1}$. The result follows from Theorem~\ref{Thm:principal center} and    Theorem~\ref{Thm:ncircle 3-center}.

If $k \geq 2$, by Proposition~\ref{prop:quiver language}  $(\romannumeral2)$, there exists a quiver $Q$ with $Q_L = X_F^{[k]}$, and $\mathbb{K} X_F^{[k]} \cong \mathbb{K}Q_{\ge 1}$. The result follows from Theorem~\ref{Thm:principal center} and Theorem~\ref{Thm:3-center}.
\end{proof}

\section*{Acknowledgments}

The authors thank Giovanni Cerulli Irelli for helpful discussions and  comments.            

\bibliographystyle{plain}
\bibliography{Standardpolynomials}

@article {MR36751,
    AUTHOR = {Amitsur, A. S. and Levitzki, J.},
     TITLE = {Minimal identities for algebras},
   JOURNAL = {Proc. Amer. Math. Soc.},
  FJOURNAL = {Proceedings of the American Mathematical Society},
    VOLUME = {1},
      YEAR = {1950},
     PAGES = {449--463},
      ISSN = {0002-9939,1088-6826},
   MRCLASS = {09.1X},
  MRNUMBER = {36751},
MRREVIEWER = {I.\ Kaplansky},
       DOI = {10.2307/2032312},
       URL = {https://doi.org/10.2307/2032312},
}

@book {MR4249615,
    AUTHOR = {Aljadeff, Eli and Giambruno, Antonio and Procesi, Claudio and
              Regev, Amitai},
     TITLE = {Rings with polynomial identities and finite dimensional
              representations of algebras},
    SERIES = {American Mathematical Society Colloquium Publications},
    VOLUME = {66},
 PUBLISHER = {American Mathematical Society, [Providence], RI},
      YEAR = {[2020] \copyright 2020},
     PAGES = {xii+630},
      ISBN = {978-1-4704-5174-5},
   MRCLASS = {16R20 (14L24 15A72 16H05 16H10 16W22)},
  MRNUMBER = {4249615},
MRREVIEWER = {Daniela\ La Mattina},
}

@book {MR2176105,
    AUTHOR = {Giambruno, Antonio and Zaicev, Mikhail},
     TITLE = {Polynomial identities and asymptotic methods},
    SERIES = {Mathematical Surveys and Monographs},
    VOLUME = {122},
 PUBLISHER = {American Mathematical Society, Providence, RI},
      YEAR = {2005},
     PAGES = {xiv+352},
      ISBN = {0-8218-3829-6},
   MRCLASS = {16R10 (16P90 20C30)},
  MRNUMBER = {2176105},
MRREVIEWER = {E.\ Formanek},
       DOI = {10.1090/surv/122},
       URL = {https://doi.org/10.1090/surv/122},
}

@book {MR3308668,
    AUTHOR = {Schiffler, Ralf},
     TITLE = {Quiver representations},
    SERIES = {CMS Books in Mathematics/Ouvrages de Math\'ematiques de la
              SMC},
 PUBLISHER = {Springer, Cham},
      YEAR = {2014},
     PAGES = {xii+230},
      ISBN = {978-3-319-09203-4; 978-3-319-09204-1},
   MRCLASS = {16G20 (16G70)},
  MRNUMBER = {3308668},
MRREVIEWER = {Alex\ Martsinkovsky},
       DOI = {10.1007/978-3-319-09204-1},
       URL = {https://doi.org/10.1007/978-3-319-09204-1},
}

@book {MR2197389,
    AUTHOR = {Assem, Ibrahim and Simson, Daniel and Skowro\'nski, Andrzej},
     TITLE = {Elements of the representation theory of associative algebras.
              {V}ol. 1},
    SERIES = {London Mathematical Society Student Texts},
    VOLUME = {65},
      NOTE = {Techniques of representation theory},
 PUBLISHER = {Cambridge University Press, Cambridge},
      YEAR = {2006},
     PAGES = {x+458},
      ISBN = {978-0-521-58423-4; 978-0-521-58631-3; 0-521-58631-3},
   MRCLASS = {16G10 (16-02)},
  MRNUMBER = {2197389},
MRREVIEWER = {Peter\ W.\ Donovan},
       DOI = {10.1017/CBO9780511614309},
       URL = {https://doi.org/10.1017/CBO9780511614309},
}

@book {MR3727119,
    AUTHOR = {Derksen, Harm and Weyman, Jerzy},
     TITLE = {An introduction to quiver representations},
    SERIES = {Graduate Studies in Mathematics},
    VOLUME = {184},
 PUBLISHER = {American Mathematical Society, Providence, RI},
      YEAR = {2017},
     PAGES = {x+334},
      ISBN = {978-1-4704-2556-2},
   MRCLASS = {16-02 (13A50 14L24 16G10 16G20 16G70)},
  MRNUMBER = {3727119},
MRREVIEWER = {Xueqing\ Chen},
       DOI = {10.1090/gsm/184},
       URL = {https://doi.org/10.1090/gsm/184},
}

@article {MR3712583,
    AUTHOR = {Corrales Garc\'ia, Mar\'ia G. and Mart\'in Barquero, Dolores
              and Mart\'in Gonz\'alez, C\'andido and Siles Molina, Mercedes
              and Solanilla Hern\'andez, Jos\'e{} F.},
     TITLE = {Centers of path algebras, {C}ohn and {L}eavitt path algebras},
   JOURNAL = {Bull. Malays. Math. Sci. Soc.},
  FJOURNAL = {Bulletin of the Malaysian Mathematical Sciences Society},
    VOLUME = {40},
      YEAR = {2017},
    NUMBER = {4},
     PAGES = {1745--1767},
      ISSN = {0126-6705,2180-4206},
   MRCLASS = {46L55 (05C25 16D70)},
  MRNUMBER = {3712583},
       DOI = {10.1007/s40840-015-0214-1},
       URL = {https://doi.org/10.1007/s40840-015-0214-1},
}

@book {MR4412543,
    AUTHOR = {Lind, Douglas and Marcus, Brian},
     TITLE = {An introduction to symbolic dynamics and coding},
    SERIES = {Cambridge Mathematical Library},
   EDITION = {Second},
 PUBLISHER = {Cambridge University Press, Cambridge},
      YEAR = {2021},
     PAGES = {xix+550},
      ISBN = {978-1-108-82028-8},
   MRCLASS = {37-02 (37B10 37B40 37B51 37D35 94Bxx)},
  MRNUMBER = {4412543},
       DOI = {10.1017/9781108899727},
       URL = {https://doi.org/10.1017/9781108899727},
}

@book {MR1475463,
    AUTHOR = {Lothaire, M.},
     TITLE = {Combinatorics on words},
    SERIES = {Cambridge Mathematical Library},
      NOTE = {With a foreword by Roger Lyndon and a preface by Dominique
              Perrin,
              Corrected reprint of the 1983 original, with a new preface by
              Perrin},
 PUBLISHER = {Cambridge University Press, Cambridge},
      YEAR = {1997},
     PAGES = {xviii+238},
      ISBN = {0-521-59924-5},
   MRCLASS = {68R15 (03D03 03D40 05A99 20M05)},
  MRNUMBER = {1475463},
       DOI = {10.1017/CBO9780511566097},
       URL = {https://doi.org/10.1017/CBO9780511566097},
}

@book {MR1408678,
    AUTHOR = {Chartrand, Gary and Lesniak, Linda},
     TITLE = {Graphs \&\ digraphs},
   EDITION = {Third},
 PUBLISHER = {Chapman \& Hall, London},
      YEAR = {1996},
     PAGES = {x+422},
      ISBN = {0-412-98721-X},
   MRCLASS = {05-01},
  MRNUMBER = {1408678},
}

@book {MR576061,
    AUTHOR = {Rowen, Louis Halle},
     TITLE = {Polynomial identities in ring theory},
    SERIES = {Pure and Applied Mathematics},
    VOLUME = {84},
 PUBLISHER = {Academic Press, Inc. [Harcourt Brace Jovanovich, Publishers],
              New York-London},
      YEAR = {1980},
     PAGES = {xx+365},
      ISBN = {0-12-599850-3},
   MRCLASS = {16A38},
  MRNUMBER = {576061},
MRREVIEWER = {S.\ A.\ Amitsur},
}

@book {MR366968,
    AUTHOR = {Procesi, Claudio},
     TITLE = {Rings with polynomial identities},
    SERIES = {Pure and Applied Mathematics},
    VOLUME = {17},
 PUBLISHER = {Marcel Dekker, Inc., New York},
      YEAR = {1973},
     PAGES = {viii+190},
   MRCLASS = {16A38},
  MRNUMBER = {366968},
MRREVIEWER = {W.\ S.\ Martindale, III},
}

@book {MR1108620,
    AUTHOR = {Kemer, Aleksandr Robertovich},
     TITLE = {Ideals of identities of associative algebras},
    SERIES = {Translations of Mathematical Monographs},
    VOLUME = {87},
      NOTE = {Translated from the Russian by C. W. Kohls},
 PUBLISHER = {American Mathematical Society, Providence, RI},
      YEAR = {1991},
     PAGES = {vi+81},
      ISBN = {0-8218-4548-9},
   MRCLASS = {16R10},
  MRNUMBER = {1108620},
MRREVIEWER = {E.\ Formanek},
       DOI = {10.1090/mmono/087},
       URL = {https://doi.org/10.1090/mmono/087},
}

@article {MR302689,
    AUTHOR = {Formanek, Edward},
     TITLE = {Central polynomials for matrix rings},
   JOURNAL = {J. Algebra},
  FJOURNAL = {Journal of Algebra},
    VOLUME = {23},
      YEAR = {1972},
     PAGES = {129--132},
      ISSN = {0021-8693},
   MRCLASS = {16A42},
  MRNUMBER = {302689},
MRREVIEWER = {V.\ C.\ Cateforis},
       DOI = {10.1016/0021-8693(72)90050-6},
       URL = {https://doi.org/10.1016/0021-8693(72)90050-6},
}

@article {MR5062649,
    AUTHOR = {Berele, Allan and Cerulli Irelli, Giovanni and De Loera
              Ch\'avez, Javier and Pascucci, Elena},
     TITLE = {Polynomial identities for quivers via incidence algebras},
   JOURNAL = {Bull. Lond. Math. Soc.},
  FJOURNAL = {Bulletin of the London Mathematical Society},
    VOLUME = {58},
      YEAR = {2026},
    NUMBER = {5},
     PAGES = {Paper No. e70369, 4},
      ISSN = {0024-6093,1469-2120},
   MRCLASS = {05E10 (16G20 16R10)},
  MRNUMBER = {5062649},
       DOI = {10.1112/blms.70369},
       URL = {https://doi.org/10.1112/blms.70369},
}

@article{ART_2026__3_2_165_0,
     author = {Cerulli Irelli, Giovanni and De Loera Ch\'avez, Javier and Pascucci, Elena},
     title = {Quivers with {Polynomial} {Identities}},
     journal = {Annals of Representation Theory},
     pages = {165--177},
     year = {2026},
     publisher = {The Publishers of ART},
     volume = {3},
     number = {2},
     doi = {10.5802/art.38},
     language = {en},
     url = {https://art.centre-mersenne.org/articles/10.5802/art.38/}
}
\end{document}